\documentclass[a4paper,reqno,12pt]{amsart}
\usepackage[T1]{fontenc}
\usepackage[utf8]{inputenc}
\usepackage[english]{babel}

\usepackage{amsmath}
\usepackage{amsthm}
\usepackage{amssymb}
\usepackage{mathtools}
\usepackage{amsfonts}
\usepackage{esint}
\usepackage{quoting}
\usepackage{graphics}
\usepackage{booktabs}
\usepackage{bm}
\usepackage{bbm}
\usepackage{lmodern}
\usepackage{stmaryrd}
\usepackage{caption}
\usepackage{enumitem}			
\usepackage{tikz}
\usepackage{mathrsfs}
\usepackage{wasysym}
\usepackage[pdftex,colorlinks=true,allcolors=blue,breaklinks]{hyperref}
\usepackage{orcidlink}
\usepackage[mathscr]{euscript}

\usepackage[margin=1.1in]{geometry}

\quotingsetup{font=small}

\theoremstyle{plain}
\newtheorem{proposition}{Proposition}[section]

\theoremstyle{plain}
\newtheorem{theorem}{Theorem}[section]
\numberwithin{equation}{section}	

\theoremstyle{plain}
\newtheorem{lemma}[theorem]{Lemma}

\theoremstyle{plain}
\newtheorem{corollary}{Corollary}[theorem]

\theoremstyle{definition}

\theoremstyle{remark}
\newtheorem{remark}{Remark}[section]

\theoremstyle{definition}

\DeclarePairedDelimiter{\abs}{\lvert}{\rvert}
\DeclarePairedDelimiter{\norma}{\lVert}{\rVert}

\renewcommand{\phi}{\varphi}
\renewcommand{\epsilon}{\varepsilon}

\DeclareMathOperator{\sign}{sign}
\DeclareMathOperator{\defi}{def}

\newcommand{\numberset}{\mathbb}
\newcommand{\N}{\numberset{N}}

\newcommand{\R}{\numberset{R}}

\newcommand{\sfera}{\numberset{S}}		
\newcommand{\Haus}{\mathcal{H}}
\newcommand{\past}{p^\ast}
\newcommand{\loc}{{\rm loc}}

\def\Xint#1{\mathchoice
	{\XXint\displaystyle\textstyle{#1}}%
	{\XXint\textstyle\scriptstyle{#1}}%
	{\XXint\scriptstyle\scriptscriptstyle{#1}}%
	{\XXint\scriptscriptstyle\scriptscriptstyle{#1}}%
	\!\int}
\def\XXint#1#2#3{{\setbox0=\hbox{$#1{#2#3}{\int}$ }
		\vcenter{\hbox{$#2#3$ }}\kern-.6\wd0}}

\def\dashint{\Xint-}

\allowdisplaybreaks


\begin{document}
	
	\title[Approximate radial symmetry for~$p$-Laplace equations]{Approximate radial symmetry for~$p$-Laplace equations via the moving planes method}
	\author[Michele Gatti]{Michele Gatti \orcidlink{0009-0002-6686-9684}}
	\address[]{Michele Gatti. Dipartimento di Matematica ‘Federigo Enriques’, Università degli Studi di Milano, Via Cesare Saldini 50, 20133, Milan, Italy}
	\email{michele.gatti1@unimi.it}
	
	\subjclass[2020]{Primary 35B33, 35B35, 35J92; Secondary 35B51}
	\date{\today}
	\keywords{Moving planes method, quantitative estimates, quasilinear elliptic equations, stability}
	
	\begin{abstract}
		We investigate quasi-symmetry for small perturbations of the Gidas-Ni-Nirenberg problem involving the~$p$-Laplacian and for small perturbations the critical~$p$-Laplace equation for~$p>2$.
		
		To achieve these results, we provide a quantitative review of the work by Damascelli \& Sciunzi~\cite{ds-har} concerning the weak Harnack comparison inequality and the local boundedness comparison inequality. Moreover, we prove a comparison principle for small domains.
	\end{abstract}
	
	\maketitle
	
	
	\section{Introduction}
	
	The classification of positive solutions to semilinear and quasilinear equations
	is a classical topic in PDEs and geometric analysis, with applications in several contexts~\cite{berg,tc-schrod,sc-stellar,fink-llev,mss,nehari}. Among the key contributions in this field are the seminal papers of Gidas, Ni \& Nirenberg~\cite{gnn-srp,gnn}, which had a significant impact through both the techniques they introduced and the results they established.
	
	In recent years, there has been increasing interest in their quantitative counterparts and, more generally, in the quantitative analysis of such problems. Despite the case of semilinear equations seems to be extensively explored~\cite{ccg,cicopepo,cfm,dsw,fg,ross}, only a few contributions are available in the quasilinear case~\cite{cirli,dpsv}.
	
	The main goal of this paper is to investigate quasi-symmetry results for a Gidas-Ni-Nirenberg-type problem on the unit ball involving the~$p$-Laplacian and a small perturbation of the critical~$p$-Laplace equation in~$\R^n$ via the moving plane method.
	
	In order to accurately present the literature, clarify our motivations, and outline our contributions, the introduction is divided into two different subsections, each corresponding to one of the problems addressed in this work.
	
	
	\subsection{Quantitative result in the unit ball}
	Let~$B_1 \subseteq \R^n$ be the unit ball,~$n\geq 2$, and~$p \in \left(1,+\infty\right)$. Let us consider~$u \in C^1(\overline{B_1})$ a weak solution of the problem
	\begin{equation}
		\label{eq:mainprob}
		\begin{cases}
			\begin{aligned}
				& -\Delta_p u = \kappa f(u) && \mbox{in } B_1, \\
				& u > 0 					&& \mbox{in } B_1, \\
				& u=0						&& \mbox{on } \partial B_1,
			\end{aligned}
		\end{cases}
	\end{equation}
	where
	\begin{equation}
		\label{eq:mainhyp-f}
		\begin{split}
			&f: [0,+\infty) \to [0,+\infty) \mbox{ is such that } f\in
			C^{0,1}_{\loc}([0,+\infty)) \mbox{ and }\\
			&\quad f(u)>0 \mbox{ for } u \in (0,+\infty),
		\end{split}
	\end{equation}
	and
	\begin{equation}
		\label{eq:mainhyp-k}
		\kappa: B_1 \to (0,+\infty) \mbox{ with } \kappa \in L^\infty(B_1).
	\end{equation}
	
	The symmetry and monotonicity of solutions to~\eqref{eq:mainprob} when~$\kappa \equiv 1$ has been thoroughly investigated. The case of~$p=2$ goes back to the aforementioned paper of Gidas, Ni \& Nirenberg~\cite{gnn-srp}. More recently, Damascelli \& Pacella~\cite{dam-pacella} studied the problem for~$1<p<2$, exploiting the moving plane method alongside topological arguments. Finally, for~$p>2$, Damascelli \& Sciunzi~\cite{ds-poinc} extended this result by combining the moving plane method with certain regularity properties established in the same paper.
	
	Additionally, analogous results, allowing even discontinuous nonlinearities, have been proven by Lions~\cite{lions}, Kesavan \& Pacella~\cite{kpac}, and Serra~\cite{serra} through integral methods boiling down on the isoperimetric inequality and Pohozaev’s identity. \newline
	
	From the quantitative point of view, Rosset~\cite{ross} and Ciraolo, Cozzi, Perugini \& Pollastro~\cite{cicopepo} investigated the stability of~\eqref{eq:mainprob} when~$p=2$. More precisely, in Theorem~1.2 of~\cite{cicopepo}, they proved the following result.
	
	\begin{theorem}
		\label{th:cicopepo}
		Let~$f: [0,+\infty) \to \R$ be a non-negative locally Lipschitz continuous function and~$\kappa \in C^1(\overline{B_1})$ be a non-negative function. Let~$u \in C^2(B_1) \cap C(\overline{B_1})$ be a classical solution to~\eqref{eq:mainprob} for~$p=2$ satisfying
		\begin{equation*}
			\frac{1}{C_0} \leq \norma{u}_{L^\infty(B_1)} \leq C_0,
		\end{equation*}
		for some~$C_0 \geq 1$. Then
		\begin{equation*}
			\abs{u(x)-u(y)} \leq C \defi(\kappa)^\alpha \quad \mbox{for all } x,y \in B_1 \; \text{such that} \; \abs{x}=\abs{y}
		\end{equation*}
		and
		\begin{equation*}
			\partial_r u (x) < C \defi(\kappa)^\alpha \quad \mbox{for all } x \in B_1 \setminus \{0\},
		\end{equation*}
		for some constants~$\alpha \in (0,1]$ and~$C>0$, depending on~$n$,~$\norma{f}_{C^{0,1}([0,C_0])}$,~$\norma{\kappa}_{L^\infty(B_1)}$, and~$C_0$, and
		\begin{equation*}
			\defi(\kappa) \coloneqq \norma{\nabla^T \kappa}_{L^\infty(B_1)} + \norma{\partial_r^+ \kappa}_{L^\infty(B_1)}.
		\end{equation*}
		Here,~$\partial_r^+$ denotes the positive part of the radial derivative and~$\nabla^T$ is the angular gradient.
	\end{theorem}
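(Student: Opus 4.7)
The natural strategy is a quantitative version of the moving planes method, in which the defect $\defi(\kappa)$ is tracked at every step. Fix a unit direction $\nu$, write $T_\lambda = \{x\cdot\nu = \lambda\}$, $\Sigma_\lambda = \{x\in B_1 : x\cdot\nu<\lambda\}$, denote by $x^\lambda = x + 2(\lambda - x\cdot\nu)\nu$ the reflected point, and set $u_\lambda(x)=u(x^\lambda)$ and $w_\lambda = u_\lambda - u$. Subtracting the two equations gives
\[
-\Delta w_\lambda - c_\lambda(x) w_\lambda = \bigl[\kappa(x^\lambda) - \kappa(x)\bigr] f(u_\lambda) \quad \mbox{in } \Sigma_\lambda,
\]
with $c_\lambda \in L^\infty$ controlled by $\norma{\kappa}_{L^\infty}$ and the Lipschitz constant of $f$ on $[0,C_0]$, together with $w_\lambda = 0$ on $T_\lambda$ and $w_\lambda \geq 0$ on $\partial B_1 \cap \partial\Sigma_\lambda$. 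The first observation is geometric: when $\lambda \leq 0$ and $x \in \Sigma_\lambda$, the reflection satisfies $|x^\lambda| \leq |x|$. Decomposing the path from $x$ to $x^\lambda$ into an inward radial segment (of length at most $|x| - |x^\lambda|$, with $\kappa$-variation controlled by $\norma{\partial_r^+ \kappa}_{L^\infty}$) and an angular arc on the sphere of radius $|x^\lambda|$ (controlled by $\norma{\nabla^T \kappa}_{L^\infty}$) yields the one-sided bound $\kappa(x^\lambda) - \kappa(x) \geq -C\defi(\kappa)$, which is precisely what converts the non-negative factor $f(u_\lambda)$ into a usable lower bound on the right-hand side.

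Next, I would run the moving planes quantitatively. For $\lambda$ just above $-1$, the cap $\Sigma_\lambda$ has small Lebesgue measure, so a quantitative weak maximum principle in small domains yields $w_\lambda \geq -K\defi(\kappa)$ on $\Sigma_\lambda$. Fixing $\eta := K\defi(\kappa)^\alpha$ with $\alpha \in (0,1]$ to be determined, define
\[
\lambda_* := \sup\{\lambda \in (-1,0] : w_\mu \geq -\eta \mbox{ in } \Sigma_\mu \mbox{ for every } \mu \in (-1,\lambda]\}.
\]
The aim is to show $\lambda_* = 0$. If not, apply the weak Harnack inequality to the non-negative function $w_{\lambda_*} + \eta$ (treated as an approximate supersolution with a right-hand side of order $\defi(\kappa)$) on a sub-cap of $\Sigma_{\lambda_*}$: this forces $w_{\lambda_*}$ to lie strictly above $-\eta$ on a large interior set, and the remaining thin strip near $T_{\lambda_*}\cup\partial B_1$ is handled by a further application of the small-domain maximum principle, contradicting the maximality of $\lambda_*$. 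The loss of power from $\defi(\kappa)$ to $\defi(\kappa)^\alpha$ arises precisely from balancing the constants in the weak Harnack estimate against the size of the caps produced in the iteration.

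From $w_\lambda \geq -C\defi(\kappa)^\alpha$ in $\Sigma_\lambda$ for all $\lambda \leq 0$ and every direction $\nu$, applying the same argument with $-\nu$ gives the two-sided estimate $|u(x) - u(x^0)| \leq C\defi(\kappa)^\alpha$ for every $x \in B_1$, where $x^0$ denotes reflection across $T_0$; since any two points with $|x|=|y|$ are exchanged by some such reflection through the origin, this yields the angular estimate. For the radial-derivative bound at $x_0 \in B_1 \setminus \{0\}$, choose $\nu = -x_0/|x_0|$: then $x_0 \in \Sigma_0$, $w_{-|x_0|}(x_0) = 0$, and $w_\lambda(x_0) \geq -C\defi(\kappa)^\alpha$ for $\lambda \in [-|x_0|,0]$. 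Combining this approximate monotonicity with standard interior gradient estimates for the linear equation satisfied by $w_\lambda$ (and a Hopf-type quantitative boundary lemma) produces $\partial_\nu u(x_0) \geq -C\defi(\kappa)^\alpha$, i.e.\ $\partial_r u(x_0) \leq C\defi(\kappa)^\alpha$. The chief technical difficulty I anticipate is the propagation step described in the second paragraph: one needs a weak Harnack inequality whose constants do not degenerate as the cap approaches the half-ball and whose error term scales correctly in $\defi(\kappa)$, and it is this step that dictates both the admissible range of $\alpha$ and the structure of the final constant $C$.
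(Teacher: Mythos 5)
This theorem is not proved in the paper; it is quoted from Ciraolo--Cozzi--Perugini--Pollastro~\cite{cicopepo} (their Theorem~1.2), and the paper only summarizes the strategy: a quantitative moving planes argument built on the Alexandroff--Bakelman--Pucci inequality in small domains and the weak Harnack inequality to propagate positivity. Your proposal follows that same strategy in every essential respect --- the small-domain maximum principle you invoke is, for $p=2$, exactly the ABP estimate; the weak Harnack step at the critical position and the radial-plus-angular path decomposition that shows why the specific form of $\defi(\kappa)$ yields the one-sided bound $\kappa(x^\lambda)-\kappa(x) \geq -C\defi(\kappa)$ are precisely the ingredients of the cited proof.
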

	
	In order to prove Theorem~\ref{th:cicopepo}, the authors performed a quantitative version of the moving plane method, strongly exploiting the linearity of Laplacian and the equivalence between maximum and comparison principles for~$p=2$.
	
	The key ingredients of their proof are the Alexandroff-Bakelman-Pucci inequality, which is used to derive estimates for the difference between two solutions in small domains, and the weak Harnack inequality, used to propagate positivity of a suitable function. See Steps~2 and~4 of the proof of Theorem~1.2 in~\cite{cicopepo}, respectively, for more precise details.
	
	From a different perspective, Dipierro, Gon\c{c}alves da Silva, Poggesi \& Valdinoci~\cite{dpsv}, as well as Ciraolo \& Li~\cite{cirli}, proved a quantitative Gidas-Ni-Nirenberg-type result for the~$p$-Laplace operator via integral identities and using the quantitative isoperimetric inequality. However, their works focus on the stability of solutions under domain perturbations and quantify the proximity of the domain~$\Omega$ to a ball. \newline
	
	In view of the aforementioned symmetry results, it is natural to ask whether some version of Theorem~\ref{th:cicopepo} still holds also for~$p \neq 2$. Trying to answer this question, in the first part of the present paper, we investigate the stability of problem~\eqref{eq:mainprob} in the sense of Theorem~\ref{th:cicopepo} for~$p>2$.
	
	Our general strategy is based upon the one developed in~\cite{cicopepo}. However, the primary challenges when dealing with the case~$p \neq 2$ arise from the nonlinear nature of the~$p$-Laplace operator and the lack of equivalence between the maximum and comparison principles. Furthermore, deriving a useful equation for the difference of two solutions to~\eqref{eq:mainprob} is not straightforward. As a result, we must first develop the tools needed to handle a general~$p \in (2,+\infty)$, which may also be of independent interest.
	
	At this stage, we mention that a significant portion of the present work will be devoted to a quantitative revision of the weak Harnack comparison inequality and the local boundedness comparison inequality established by Damascelli \& Sciunzi~\cite{ds-har}. This revision is motivated by the observation that the constants in the original results depend implicitly on the radius of the ball on which the inequalities are applied. Consequently, when propagating, for instance, the weak Harnack inequality along a chain, the constants are not under control.
	
	Our first main result is the following.
	
	\begin{theorem}
		\label{th:mainth}
		Assume that~$f: [0,+\infty) \to [0,+\infty)$ satisfies~\eqref{eq:mainhyp-f} and that~$\kappa: B_1 \to (0,+\infty)$ satisfies~\eqref{eq:mainhyp-k}. Suppose that~$p \in \left(2,+\infty\right)$ and let~$u \in C^1(\overline{B_1})$ be a non-negative weak solution to~\eqref{eq:mainprob} such that
		\begin{equation}
			\label{eq:boundu}
			\frac{1}{C_0} \leq \norma{u}_{L^\infty(B_1)} \leq C_0,
		\end{equation}
		for some~$C_0 \geq 1$. Assume furthermore that, for some constant~$F>0$, we have
		\begin{equation}
			\label{eq:ipotesif}
			f(u) \leq F u^{p-1}.
		\end{equation}
		Then, there exist a large constant~$C \geq 1$ and a constant~$\alpha \in (0,1)$, such that, if~$\defi(\kappa) \in \left(0,\frac{1}{C}\right)$,
		\begin{equation}
			\label{eq:qsym-toprove}
			\abs*{u(x)-u(y)} \leq C \, \abs*{\log\left(C \defi(\kappa)\right)}^{-\alpha} \quad \mbox{for all } x,y \in B_1 \mbox{ such that } \abs{x}=\abs{y},
		\end{equation}
		where
		\begin{equation}
			\label{eq:deficit}
			\defi(\kappa) \coloneqq \underset{x \in B_1}{\mathrm{osc}} \kappa(x).
		\end{equation}
		In particular, the constants~$C$ and~$\alpha$ depend only on~$n$,~$p$, $C_0$,~$\norma{f}_{C^{0,1}([0,C_0])}$,~$f$,~$\norma{\kappa}_{L^\infty(B_1)}$, and~$F$.
	\end{theorem}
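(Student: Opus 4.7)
The plan is to execute a quantitative version of the moving planes method in the spirit of~\cite{cicopepo}, using the quantitative comparison tools (weak Harnack comparison inequality, local boundedness comparison inequality, and small-domain comparison principle) developed in the preceding sections of the paper. Fix a direction~$e\in\sfera^{n-1}$ and, for~$\lambda\in(-1,0]$, denote by~$T_\lambda^e:=\{x\cdot e=\lambda\}$ the moving hyperplane, by~$\Sigma_\lambda^e:=\{x\in B_1:x\cdot e<\lambda\}$ the corresponding cap, and by~$u_\lambda(x):=u(x^\lambda)$ the reflected solution, where~$x^\lambda$ is the reflection of~$x$ across~$T_\lambda^e$. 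Because~$-\Delta_p u_\lambda(x)=\kappa(x^\lambda)f(u_\lambda(x))$, the pair~$(u,u_\lambda)$ satisfies the same equation on~$\Sigma_\lambda^e$ up to a source term of size controlled by~$\defi(\kappa)\,F\,\norma{u}_{L^\infty(B_1)}^{p-1}$, thanks to~\eqref{eq:ipotesif}.

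To start the moving plane, I would pick~$\lambda$ sufficiently close to~$-1$: the cap~$\Sigma_\lambda^e$ is then small, and the comparison principle in small domains applied to~$u$ and~$u_\lambda$ yields
\[
\sup_{\Sigma_\lambda^e}(u-u_\lambda)_+\leq C\,\defi(\kappa)^{\gamma_0}
\]
for some~$\gamma_0\in(0,1)$. I would then push~$\lambda$ toward~$0$ iteratively. The key tool is the quantitative weak Harnack comparison inequality applied to~$(u_\lambda-u)_+$, which essentially solves a linearization of the~$p$-Laplace equation on~$\Sigma_\lambda^e$ with source of size~$\defi(\kappa)$. Covering~$\Sigma_\lambda^e$ by a chain of balls along which the weak Harnack inequality propagates positivity modulo the deficit, and controlling the negative part near~$\partial B_1\cap\partial\Sigma_\lambda^e$ by the local boundedness comparison inequality combined once more with the small-domain comparison, one obtains a recursion of the form~$\varepsilon_{k+1}\leq C\varepsilon_k^\beta+C\defi(\kappa)^\gamma$ with~$\beta\in(0,1)$, where~$\varepsilon_k$ bounds~$\sup_{\Sigma_\lambda^e}(u-u_\lambda)_+$ after~$k$ chain-steps. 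Optimizing the radius of the balls in the chain against the total number of steps needed to reach~$\lambda$ close to~$0$ produces the logarithmic modulus~$\abs{\log(C\defi(\kappa))}^{-\alpha}$ appearing in~\eqref{eq:qsym-toprove}.

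Swapping~$e$ with~$-e$ and repeating the argument yields the reverse inequality, so that~$\sup_{\Sigma_0^e}\abs{u-u_0}\leq C\abs{\log(C\defi(\kappa))}^{-\alpha}$, i.e.\ approximate symmetry with respect to~$T_0^e$. Since~$e\in\sfera^{n-1}$ is arbitrary and the constants depend only on the quantities listed in the statement, for any two points~$x,y\in B_1$ with~$\abs{x}=\abs{y}$ one can connect~$x$ to~$y$ by finitely many reflections through suitable hyperplanes passing through the origin, each contributing a bounded multiple of the same error, which gives~\eqref{eq:qsym-toprove}.

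The main obstacle will be keeping the absolute constants in the chain-of-balls iteration under explicit control: in the~$p$-Laplace setting with~$p>2$ the linearization of~$-\Delta_p$ degenerates at the critical points of~$u$, and the original Damascelli--Sciunzi weak Harnack inequality has an implicit dependence on the radius of the ball on which it is applied. Retaining this dependence quantitatively through the iteration --- which is precisely the purpose of the revision developed in the first part of the paper --- is what ultimately forces the rate to be logarithmic, as opposed to the polynomial rate available in the linear case~$p=2$ via the ABP inequality and Krylov--Safonov-type estimates used in~\cite{cicopepo}.
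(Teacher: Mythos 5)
The overall skeleton of your proposal --- quantitative moving planes, started by the small-domain comparison principle of Proposition~\ref{prop:wcp} and continued via a chain of balls on which the weak Harnack comparison inequality of Theorem~\ref{th:wh} propagates positivity --- matches the paper's argument. There are, however, two concrete gaps.

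First, you never identify what seeds the positivity that the Harnack chain is supposed to propagate. In the paper this comes from the a priori lower bound~\eqref{eq:ulinearbound}, namely~$u(x) \geq \frac{1}{C_2}(1-\abs{x})$ on~$B_1$, proved in Section~\ref{sec:prelim-bolla-plap} via the boundary barrier~$\Psi(r)=B(e^{\beta(1-r)}-1)$ combined with Serrin's Harnack inequality and assumption~\eqref{eq:ipotesif}. It is this estimate, not the weak Harnack comparison inequality alone, that produces a strictly positive lower bound for~$u_{\lambda_\star}-u$ on a small ball near~$\partial B_1$, which the chain then spreads over~$\Sigma_{\lambda_\star,\delta}$. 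Without it, the weak Harnack inequality merely relates one small quantity to another and gives no contradiction to the minimality of~$\lambda_\star$.

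Second, the claimed recursion~$\varepsilon_{k+1}\leq C\varepsilon_k^\beta+C\defi(\kappa)^\gamma$ does not occur in the paper, is not justified as stated, and is not the source of the logarithmic rate. The paper instead defines the critical position~$\lambda_\star$ directly and shows by contradiction that~$\lambda_\star\leq\lambda_1(\defi(\kappa))$. The logarithmic modulus arises because the weak Harnack constant~$\mathcal{M}(R)=R^{-n/s}(c_\flat R)^{C_\natural/R^{2/(\mathfrak{q}-2)}}$ of Theorem~\ref{th:wh} degenerates like~$e^{-C/R^\mu}$ as~$R\to0$: the chain radius~$\delta$ must be taken comparable to~$\lambda_\star$ (so that~$\Sigma_{\lambda_\star,\delta}$ still captures the region where the lower bound~\eqref{eq:ulinearbound} gives positive mass), the accumulated factor along the chain is then~$\sim e^{-C/\lambda_\star^\tau}$, and balancing this against~$\defi(\kappa)$ forces~$\lambda_1\sim\abs{\log\defi(\kappa)}^{-1/\tau}$. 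There is no iteration of a nonlinear inequality in a sequence of caps.

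Two minor points: the local boundedness comparison inequalities (Theorems~\ref{th:localbound} and~\ref{th:localbound-bordo}) are not used in the proof of Theorem~\ref{th:mainth}; they enter only in the whole-space result (Theorem~\ref{th:plap-spazio}). And the final reduction needs only a single reflection across a hyperplane through the origin, chosen after a rotation so that it swaps~$x$ and~$y$; a chain of reflections is unnecessary.
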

	
	Theorem~\ref{th:mainth} provides a quantitative quasi-symmetry result, with a logarithmic-type dependence, in terms of the deficit~\eqref{eq:deficit}. In particular, when~$\defi(\kappa)=0$ and~$\kappa$ is sufficiently regular, it is constant, and the radial symmetry of non-negative solutions to~\eqref{eq:mainprob} in well-established. Moreover, the right-hand side of~\eqref{eq:qsym-toprove} vanishes when~$\defi(\kappa) \to 0$, consistently with the symmetry of~$u$.
	
	We also point out that the logarithmic-type dependence in~\eqref{eq:qsym-toprove}, in contrast with a power-like dependence of Theorem~\ref{th:cicopepo}, arises due to technical challenges related to the weak Harnack inequality. See Remark~\ref{rem:wh-logtype} below for a more precise explanation.
	
	Additionally, we emphasize that the dependence on~$f$ in Theorem~\ref{th:mainth} is clear. Notably, whenever~$f \geq f_0$ in~$[0,C_0]$ for some constant~$f_0 >0$, then the dependence is through this constant. For further details, see Remark~\ref{rem:dep-f} in Appendix~\ref{ap:proofsum}.
	
	Finally, it is worth mentioning that Damascelli \& Pardo~\cite{dam-par} derived conditions on~$f$ that ensure an a priori estimate of the type given in the right-most inequality of~\eqref{eq:boundu} for solutions to~\eqref{eq:mainprob}.
	
	\begin{remark}
	\label{rem:no-quasimon}
		Upon comparing Theorem~\ref{th:cicopepo} with Theorem~\ref{th:mainth}, it appears that the latter lacks quasi-monotonicity estimates for solutions. In our case, obtaining such an estimate seems difficult using arguments analogous to those exploited in~\cite{cicopepo} due to technical challenges in applying barrier methods -- see Step~6 of the proof of Theorem~1.2 in~\cite{cicopepo}. A more suitable approach might involve the linearized operator, as it is known that derivatives satisfy the linearized equation -- see Lemma~2.3 in~\cite{ds-poinc}. However, these would require a comparison principle for the linearized operator, possibly involving two distinct solutions, and, to the best of our knowledge, such a result is not currently available in the literature.
	\end{remark}
	
	
	\subsection{Quantitative result in the whole space}
	
	For~$2<p<n$, let~$u \in \mathcal{D}^{1,p}(\R^n)$ be a non-negative, non-trivial weak solution to
	\begin{equation}
		\label{eq:mainprob-crit}
		\Delta_p u + \kappa\!\left(x\right) u^{\past-1} = 0 \quad\mbox{in } \R^n,
	\end{equation}
	where~$\kappa: \R^n \to (0,+\infty)$ satisfies~$\kappa \in L^\infty(\R^n)$ and is bounded below, that is
	\begin{equation}
		\label{eq:k-below-sp}
		\kappa \geq \underline{\kappa} >0 \quad \mbox{a.e.\ in } \R^n.
	\end{equation}
	Recall that here and in the following
	\begin{equation*}
		\mathcal{D}^{1,p}(\R^n) \coloneqq \left\{u \in L^{\past}\!(\R^n) \,\big\lvert\, \nabla u \in L^p(\R^n) \right\} \!,
	\end{equation*}
	with~$\past$ being the critical exponent for the Sobolev embedding.
	
	The classification of non-negative solutions to~\eqref{eq:mainprob-crit} in the case~$\kappa \equiv 1$, namely
	\begin{equation}
		\label{eq:crit}
		\Delta_p u + u^{\past-1} = 0
	\end{equation}
	has a long history. It is worth recalling that the family of functions
	\begin{equation*}
		U_p[z,\lambda] (x) \coloneqq \left( \frac{\lambda^\frac{1}{p-1} \, n^\frac{1}{p} \left(\frac{n-p}{p-1}\right)^{\!\!\frac{p-1}{p}}}{\lambda^\frac{p}{p-1}+\abs*{x-z}^\frac{p}{p-1}} \right)^{\!\!\frac{n-p}{p}} \!,
	\end{equation*}
	where~$z \in \R^n$ is the center of the function and~$\lambda>0$ its scaling factor, is well-known to be a family of explicit solutions. These functions are known as \emph{Talenti bubbles}. Furthermore, thanks to Talenti's work~\cite{tal}, it is well-established that these functions minimize the Sobolev quotient
	\begin{equation*}
		S_p = S \coloneqq \min_{u \in \mathcal{D}^{1,p}(\R^n) \setminus \{ 0 \}} \frac{ \norma*{\nabla u}_{L^p(\R^n)}}{\norma*{u}_{L^{\past}\!(\R^n)}}.
	\end{equation*} 
	
	When~$p=2$ the classification of the non-negative solutions to~\eqref{eq:crit} can be traced back to the groundbreaking paper of Gidas, Ni \& Nirenberg~\cite{gnn}, where they require a suitable decay at infinity. This condition was later removed by Caffarelli, Gidas \& Spruck~\cite{cgs} and Chen \& Li~\cite{cl}, leading to the conclusion that Talenti bubbles are the only non-negative solutions to~\eqref{eq:crit}.
	
	Concerning the~$p$-Laplace equation, the classification result for energy solutions, i.e.\ functions in the energy space~$\mathcal{D}^{1,p}(\R^n)$, for~$1<p<2$ was initially obtained by Damascelli, Merch\'an, Montoro \& Sciunzi~\cite{dm} under the additional assumption of local Lipschitz continuity of the nonlinearity, namely for~$p \geq \frac{2n}{n+2}$. Later, V\'etois~\cite{vet} removed this restriction extending the result to the whole range~$1<p<2$. Finally, Sciunzi~\cite{sciu} proved that, for~$2<p<n$, the only non-negative solutions to~\eqref{eq:crit} are the bubbles, thereby providing a complete classification of energy solutions. \newline
	
	To the best of our knowledge, from the quantitative perspective, only the case~$p=2$ has been extensively studied and has received significant attention in recent years.
	
	Looking for a quantitative counterpart of Struwe's stability result~\cite{struwe}, Ciraolo, Figalli \& Maggi~\cite{cfm} showed that if~$u \in \mathcal{D}^{1,2}(\R^n)$ has nearly the same energy as a Talenti bubble, then it is quantitatively close to a single bubble in the space~$\mathcal{D}^{1,2}(\R^n)$. This result was later extended by Figalli \& Glaudo~\cite{fg} and Deng, Sun \& Wei~\cite{dsw} when the energy of the function~$u$ is approximately that of~$m \in \N$ Talenti bubbles. In particular, in~\cite{cfm} they define the reference constant
	\begin{equation}
		\label{eq:kappa0-def}
		\kappa_0=\kappa_0(u) \coloneqq \frac{\int_{\R^n} \kappa(x) u^{\past} dx}{\int_{\R^n} u^{\past} dx} = \frac{\int_{\R^n} \,\abs{\nabla u}^p \, dx}{\int_{\R^n} u^{\past} dx} ,
	\end{equation}
	and measure the proximity of~$\kappa$ to~$\kappa_0$ in terms of the deficit
	\begin{equation}
		\label{eq:def_cfm}
		\defi(u,\kappa) \coloneqq \norma*{\left(\kappa-\kappa_0\right)u^{\past-1}}_{L^{(\past)'}\!(\R^n)}.
	\end{equation}
	As a consequence of  Ciraolo, Figalli \& Maggi's theorem, if a function~$u \in \mathcal{D}^{1,2}(\R^n)$ has nearly the energy of a bubble and almost solves~\eqref{eq:crit}, it is both quasi-symmetric and quasi-monotone decreasing in terms of~\eqref{eq:def_cfm}. However, this result and the methods therein seem to apply only to equations of the type~\eqref{eq:mainprob-crit} with~$p=2$. In fact, the technique developed in~\cite{cfm,dsw,fg} strongly relies on the linearity of the Laplacian and the Hilbert structure of~$\mathcal{D}^{1,2}(\R^n)$, as well as the knowledge of the explicit family of solutions to~\eqref{eq:crit} and their spectral properties.
	
	For these reasons, when examining the proximity of non-energy classical solutions to~\eqref{eq:mainprob-crit} and of solutions to more general semilinear equations involving the Laplacian to the radial configuration, Ciraolo, Cozzi \& Gatti~\cite{ccg} adopted different approaches. \newline
	
	In the second part of the present work, we aim to show that the technique developed in~\cite{ccg} for handling energy solutions of a semilinear equation can be extended to~\eqref{eq:mainprob-crit}, provided some necessary tools are available.
	
	The technique in~\cite{ccg} boils down to the quantitative analysis of the moving planes method and heavily relies on the weak Harnack inequality together with the local boundedness inequality -- see~\cite[Theorem~8.17, Theorem~8.18, and Theorem~8.25]{gt} for the case of linear operators.
	
	As mentioned above, the equivalence between maximum and comparison principles notably simplifies the problem in the case of the Laplacian. However, in the general scenario, where such equivalence is absent, we must first develop the appropriate tools before proving the main result. Some of these tools are also required to address problem~\eqref{eq:mainprob} in the unit ball. \newline
	
	We now introduce our main assumptions and the almost-symmetry result related to the perturbed critical equation~\eqref{eq:mainprob-crit}.
	
	To our purposes, we impose a global decay assumption, namely that there exists a constant~$C_0\geq 1$ such that
	\begin{equation}
		\label{eq:decadimento}
		u(x) \leq \frac{C_0}{1+\abs{x}^\frac{n-p}{p-1}} \quad \mbox{for a.e.~} x \in \R^n
	\end{equation}
	and a lower bound for the gradient, namely that there exist constants~$c_1>0$, $R_0 \geq 1$ such that
	\begin{equation}
		\label{eq:bb-grad}
		\abs*{\nabla u (x)} \geq c_1 \abs{x}^\frac{1-n}{p-1} \quad \mbox{for a.e.~} \abs{x} \geq R_0. 
	\end{equation}
	
	We emphasize that both these hypotheses are quite natural. In fact, Theorem~1.1 of~\cite{vet} establishes that any solution to~\eqref{eq:mainprob-crit}, with~$\kappa \in L^\infty(\R^n)$, exhibits a decay of the form~\eqref{eq:decadimento}, with constants depending on~$u$. For additional discussion on this assumption and its role in the proof, see Section~1.2 in~\cite{ccg}. On the other hand, Sciunzi~\cite{sciu} proved the validity of~\eqref{eq:bb-grad} for non-negative solutions to~\eqref{eq:crit} by employing a blow-up argument. In this case, as well, the constants appearing in the estimate of Theorem~2.2 in~\cite{sciu} depend on~$u$. Since the same proof applies also for~\eqref{eq:mainprob-crit}, here, we fix the constants for quantitative purposes.
	
	Our second main result is the following.
	
	\begin{theorem}
		\label{th:plap-spazio}
		Let~$n \geq 3$ be an integer,~$2 < p < n$, and~$\kappa: \R^n \to (0,+\infty)$ be such that~$\kappa \in L^\infty(\R^n)$ and~\eqref{eq:k-below-sp} holds true. Moreover, let~$u \in \mathcal{D}^{1,p}(\R^n)$ be a non-negative, non-trivial weak solution to~\eqref{eq:mainprob-crit} satisfying~\eqref{eq:decadimento}-\eqref{eq:bb-grad}.
		
		Then, there exist a point~$\mathcal{O} \in \R^n$, a large constant~$C>0$, and a constant~$\vartheta>0$, such that, if~$\defi(u,\kappa) \in (0,1)$, then
		\begin{equation*}
			\abs*{u(x)-u(y)} \leq C \,\abs{\log\defi(u,\kappa)}^{-\vartheta}
		\end{equation*}
		for every~$x,y \in \R^n$ satisfying~$\abs*{x-\mathcal{O}}=\abs*{y-\mathcal{O}}$. Moreover, if~$u_\Theta$ denotes any rotation of~$u$ centered at~$\mathcal{O}$, we also have
		\begin{equation*}
			\norma*{u-u_\Theta}_{\mathcal{D}^{1,p}(\R^n)} 
			\leq C \,\abs{\log\defi(u,\kappa)}^{-\vartheta}.
		\end{equation*}
		The constant~$C$ depends only on~$n$,~$p$,~$C_0$, $c_1$,~$\norma{\kappa}_{L^\infty(\R^n)}$,~$\underline{\kappa}$, and~$R_0$, whereas~$\vartheta$ depends only on~$n$ and~$p$.
	\end{theorem}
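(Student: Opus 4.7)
The plan is to adapt the quantitative moving planes scheme of~\cite{ccg}, developed there for the critical semilinear equation, to the quasilinear setting~\eqref{eq:mainprob-crit}. The three quantitative tools established in the first part of the paper -- the weak Harnack comparison inequality, the local boundedness comparison inequality, and the comparison principle on small domains -- play the role of the linear weak Harnack and ABP inequalities used in~\cite{ccg}. The decay~\eqref{eq:decadimento} is what allows the method to be initiated uniformly in the reflection direction, while~\eqref{eq:bb-grad} ensures non-degeneracy of the $p$-Laplacian along the outer rays and is essential for applying the comparison inequalities to differences of the form $(u_\lambda - u)_+$.

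For each direction $e \in \sfera^{n-1}$, I introduce the halfspace $\Sigma_\lambda^e = \{x \cdot e > \lambda\}$, the hyperplane $T_\lambda^e = \{x \cdot e = \lambda\}$, the reflection $x_\lambda$ of $x$ across $T_\lambda^e$, and $u_\lambda(x) \coloneqq u(x_\lambda)$. In the clean symmetric case $\kappa \equiv \kappa_0$, the classification in~\cite{sciu} forces $u_\lambda \leq u$ in $\Sigma_\lambda^e$ for every $\lambda \geq \bar\lambda(e)$, with equality exactly at the critical value. In our perturbed setting, $u_\lambda$ solves~\eqref{eq:mainprob-crit} with coefficient $\kappa(x_\lambda)$ in place of $\kappa(x)$, so $(u_\lambda - u)_+$ satisfies a quasilinear differential comparison whose forcing term is controlled by $\defi(u,\kappa)$. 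The quantitative procedure then proceeds in three stages: first, for $\lambda$ sufficiently large, $u$ is uniformly small in $\Sigma_\lambda^e$ by~\eqref{eq:decadimento}, so the small-domain comparison principle yields a starting bound for $(u_\lambda - u)_+$ of order $\defi(u,\kappa)$; second, $\lambda$ is progressively decreased and this smallness is transferred along a chain of balls covering $\Sigma_\lambda^e$, alternating the local boundedness comparison inequality (to dominate $L^\infty$-norms by $L^q$-integrals) with the weak Harnack comparison inequality (to shift smallness from one ball to a neighbouring one); third, iterating this chain down to the optimal stopping point yields a logarithmic quantitative bound for the critical position $\bar\lambda(e)$ in terms of $\defi(u,\kappa)$.

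Running this analysis along $n$ mutually orthogonal directions produces the approximate centre of symmetry $\mathcal{O}$ as the intersection of the corresponding critical hyperplanes. The pointwise quasi-symmetry for points equidistant from $\mathcal{O}$ then follows by applying the reflection bound along finitely many directions covering $\sfera^{n-1}$, exploiting the uniform $C^{1,\alpha}$ regularity of $u$ coming from the standard quasilinear theory together with~\eqref{eq:decadimento}. To upgrade the pointwise control to the $\mathcal{D}^{1,p}$ estimate for $u - u_\Theta$, I plan to mimic~\cite{ccg}: split $\R^n = B_R \cup (\R^n \setminus B_R)$, bound $\norma{\nabla(u - u_\Theta)}_{L^p(B_R)}$ by combining the pointwise closeness with interior gradient estimates, control the tail contribution via the decay~\eqref{eq:decadimento}, and optimize the radius~$R$.

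The hardest step will be the propagation one: the Damascelli-Sciunzi weak Harnack comparison inequality has a nonlinear dependence on the defect of the comparison, so a naive iteration along the chain loses a fixed multiplicative factor at each application, and the number of balls needed grows as $\lambda$ decreases. The quantitative revision of the Damascelli-Sciunzi estimates carried out earlier in the paper is precisely what is required to keep the compounded error under control; it is also what forces the final rate in $\defi(u,\kappa)$ to be logarithmic rather than polynomial, in accordance with Remark~\ref{rem:wh-logtype}. A secondary technical point, specific to the whole-space setting, is that the decay~\eqref{eq:decadimento} must be used in a quantitative fashion to start the moving planes uniformly in the direction $e$ and to rule out concentration of mass at infinity during the chain argument.
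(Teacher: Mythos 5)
Your high-level strategy — a quantitative moving planes scheme in the spirit of~\cite{ccg}, powered by the weak Harnack and local boundedness comparison inequalities developed earlier in the paper — matches the paper's approach, and you correctly identify the source of the logarithmic rate as the radius-dependent degradation of the weak Harnack constant. However, there is a genuine gap in the first stage of your plan.

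You propose to start the moving planes by invoking the small-domain comparison principle (Proposition~\ref{prop:wcp}) on~$\Sigma_\lambda$ for~$\lambda$ large. This fails for two reasons. First, in the whole-space setting~$\Sigma_\lambda = \{x_n > \lambda\}$ is an unbounded half-space with infinite measure for every~$\lambda$, so the hypothesis~$\abs{\Omega} \le \delta$ of Proposition~\ref{prop:wcp} can never be satisfied; the decay~\eqref{eq:decadimento} makes the \emph{function} small, not the \emph{domain}. Second, the deficit~$\defi(u,\kappa)$ in Theorem~\ref{th:plap-spazio} is the~$L^{(\past)'}$-norm of~$(\kappa-\kappa_0)u^{\past-1}$ given in~\eqref{eq:def_cfm}, whereas Proposition~\ref{prop:wcp} bounds~$(u-u_\lambda)_+$ by an~$L^\infty$-norm of the right-hand side discrepancy; these quantities are not directly comparable, so even if the domain issue were resolved the conclusion would not land in the correct deficit. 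The paper avoids both problems by using an energy argument in Step~2: test~\eqref{eq:mainprob-crit} against~$(u-u_\lambda)_+\chi_{\Sigma_\lambda}$, use~\eqref{eq:disp-basso}, Sobolev and H\"older to control the~$(\kappa-\kappa_0)$ terms by~$\defi(u,\kappa)$, and then apply the weighted Hardy inequality of Lemma~2.3 in~\cite{dam-ram} together with the gradient lower bound~\eqref{eq:bb-grad} to absorb the remaining term~$\kappa_0\int(u^{\past-1}-u_\lambda^{\past-1})(u-u_\lambda)_+$ into the left-hand side for~$\lambda$ large. This yields an~$L^{\past}$ bound on~$(u-u_\lambda)_+$ of order~$\defi(u,\kappa)^{1/(p-1)}$, which is then upgraded to a pointwise bound in a later step via the local boundedness comparison inequality. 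Your description also does not reflect the contradiction structure used in Steps~4--6 (assume the reflected difference has mass, locate it in a cube, propagate positivity with the weak Harnack chain, and contradict the minimality of~$\lambda_\star$ via the same energy argument): the weak Harnack is used to propagate \emph{positivity} of~$u_{\lambda_\star}-u$, not smallness of~$(u-u_\lambda)_+$. These points are where the real work lies and where~\eqref{eq:bb-grad} and the weighted Poincar\'e inequality (Corollary~\ref{cor:w-poinc}) enter, none of which appear in your outline.
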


	A remark analogous to Remark~\ref{rem:no-quasimon} also applies when comparing Theorem~\ref{th:plap-spazio} with Theorem~1.1 in~\cite{ccg}.
	
	\begin{remark}
		We note that most of the results in the present paper hold for the case~$p>2$. We believe that, by further refining the proofs in~\cite{ds-har}, as done below for~$p>2$, one can extend these results also to the range
		\begin{equation}
		\label{eq:range-p-ext}
			p \in \left(\frac{2n+2}{n+2},2\right)\!.
		\end{equation}
		This extension requires a more detailed technical analysis of the proofs in~\cite{ds-har}. However, to keep the discussion concise, we refrain from pursuing it in the present work.
		
		We also note that the range~\eqref{eq:range-p-ext} is optimal for the technique developed in~\cite{ds-har} and further analyzed here, due to integrability constraints on~$\abs*{\nabla u}^{p-2}$.
		
		Finally, we emphasize that the restriction~\eqref{eq:range-p-ext} is necessary only for the weak Harnack inequality and the local boundedness comparison inequality to hold, suggesting that it may be merely a technical limitation in obtaining quasi-symmetry estimates for solutions to~\eqref{eq:mainprob} and~\eqref{eq:mainprob-crit}.
	\end{remark}
	
	
	\section{Technical tools \& Comparison inequalities}
	\label{sec:tectool}
	
	Suppose that~$\Omega \subseteq \R^n$ is an open set. In this section we are mainly concerned with functions~$u_1,u_2\in C^1(\Omega)$ which are weak subsolution and supersolution to
	\begin{equation}
		\label{eq:eqforu}
		-\Delta_p u_i + c \, u_i = g_i(x,u_i) \quad \mbox{in } \Omega
	\end{equation}
	for~$i=1,2$, where~$g_i: \Omega \times \R \to \R$ and~$c: \Omega \to \R$ with~$c \in L^\infty(\Omega)$.
	
	Recall that a function~$u_1\in C^1(\Omega)$ is a weak subsolution to~\eqref{eq:eqforu} if
	\begin{equation*}
		\int_{\Omega} \,\abs*{\nabla u_1}^{p-2} \left\langle \nabla u_1 , \nabla \phi \right\rangle + c \, u_1 \phi \, dx \leq \int_{\Omega} g_1(x,u_1) \phi \, dx \quad \mbox{for all } \phi \in C^\infty_c(\Omega) \mbox{ with } \phi \geq 0
	\end{equation*}
	and a function~$u_2\in C^1(\Omega)$ is a weak supersolution to~\eqref{eq:eqforu} if
	\begin{equation*}
		\int_{\Omega} \,\abs*{\nabla u_2}^{p-2} \left\langle \nabla u_2 , \nabla \phi \right\rangle + c \, u_2 \phi \, dx \geq \int_{\Omega} g_2(x,u_2) \phi \, dx \quad \mbox{for all } \phi \in C^\infty_c(\Omega) \mbox{ with } \phi \geq 0.
	\end{equation*}
	
	\subsection{Fundamental inequalities.} In the following, we will repeatedly use some well-known inequalities, which we recall here for convenience. We refer to Lemma~2.1 in~\cite{dam-comp} for their proof.
	For any $\eta,\eta' \in \R^n$ with $\abs{\eta}+\abs{\eta'}>0$ we have
	\begin{align}
		\label{eq:disp-basso}
		\left\langle \abs{\eta}^{p-2}\,\eta - \abs{\eta'}^{p-2}\,\eta', \eta-\eta' \right\rangle &\geq c(p) \left(\abs{\eta}+\abs{\eta'}\right)^{p-2} \abs*{\eta-\eta'}^2, \\
		\label{eq:disp-alto}
		\abs*{\abs{\eta}^{p-2}\,\eta - \abs{\eta'}^{p-2}\,\eta'} &\leq C(p) \left(\abs{\eta}+\abs{\eta'}\right)^{p-2} \abs{\eta-\eta'}.
	\end{align}
	Moreover, for any $\eta,\eta' \in \R^n$ we have
	\begin{align}
		\label{eq:disp-basso-2}
		\left\langle \abs{\eta}^{p-2}\,\eta - \abs{\eta'}^{p-2}\,\eta', \eta-\eta' \right\rangle &\geq \hat{c}(p) \abs*{\eta-\eta'}^p \quad\text{if} \;p\geq 2, \\
		\notag
		\abs*{\abs{\eta}^{p-2}\,\eta - \abs{\eta'}^{p-2}\,\eta'} &\leq \widehat{C}(p) \abs{\eta-\eta'}^{p-1} \quad\text{if} \;1<p\leq 2.
	\end{align}
	
	\subsection{Functional setting and known results.} We introduce here the functional setting. Following~\cite{stamp-mu}, for~$\rho \in L^1(\Omega)$ and~$1\leq q <+\infty$, we define the Banach space~$H^{1,q}_\rho(\Omega)$ as the completion of~$C^1(\overline{\Omega})$ with respect to the norm
	\begin{equation*}
		\norma{\cdot}_{H^{1,q}_\rho(\Omega)} \coloneqq \norma{\cdot}_{L^q(\Omega)}+\norma{\nabla\cdot}_{L^q_\rho(\Omega)}, \quad \norma{\nabla\cdot}^q_{L^q_\rho(\Omega)} = \int_\Omega \rho \,\abs{\nabla\cdot}^q \, dx.
	\end{equation*}
	Moreover, we define~$H^{1,q}_{0,\rho}(\Omega)$ as the closure of~$C^1_c(\Omega)$ in~$H^{1,q}_\rho(\Omega)$.
	
	Observe that if~$\rho \in L^\infty(\Omega)$, then~$W^{1,q}(\Omega)$ is continuously embedded in~$H^{1,q}_\rho(\Omega)$. Additionally, if~$u \in W^{1,q}(\Omega)$ for~$q\geq2$ and~$\rho = \abs{\nabla u}^{q-2}$, H\"older inequality implies that~$W^{1,q}(\Omega)$ is continuously embedded in~$H^{1,2}_\rho(\Omega)$. \newline
	
	For reference, we summarize here some known results from Theorem~8, Remark~4, and Corollary~2 of~\cite{fms}, originally proven in Theorem~3.1 and Theorem~3.2 of~\cite{ds-poinc}.
	
	\begin{theorem}[Weighted Sobolev inequality]
		\label{th:w-sobolev}
		Let~$\Omega \subseteq \R^n$ be a bounded domain and~$\rho$ a non-negative weight function such that
		\begin{equation}
			\label{eq:condizione-peso}
			\int_\Omega \frac{1}{\rho^t \abs{x-y}^\gamma} \, dy \leq C^\ast,
		\end{equation}
		with~$C^\ast$ independent of~$x\in\Omega$,~$t=\frac{p-1}{p-2}r$,~$r\in\left(\frac{p-2}{p-1},1\right)$, and~$\gamma<n-2$ or~$\gamma=0$ if~$n=2$.
		
		Then, there exists a constant~$C_\mathcal{S}>0$ such that
		\begin{equation}
			\label{eq:sobolev-pesata}
			\norma{u}_{L^q(\Omega)} \leq C_\mathcal{S} \norma{\nabla u}_{L^2_\rho(\Omega)}
		\end{equation}
		for any~$u \in H^{1,2}_{0,\rho}(\Omega)$ and any~$1\leq q < 2^\ast(t)$, where
		\begin{equation*}
			\frac{1}{2^\ast(t)} = \frac{1}{2} - \frac{1}{n} + \frac{1}{t} \left(  \frac{1}{2} - \frac{\gamma}{2n} \right)
		\end{equation*}
		and
		\begin{gather*}
			C_\mathcal{S} = C_\mathcal{S}(\Omega) \coloneqq \widehat{C} \left(C^\ast\right)^\frac{1}{2t}\left(C_M\right)^\frac{1}{(2t)'}, \\
			\widehat{C}\coloneqq\frac{1}{n\abs{B_1}}, \quad C_M\coloneqq \left(\frac{1-\delta}{\mu-\delta}\right)^{\!\!1-\delta} \abs{B_1}^{\mu-\delta} \,\abs{\Omega}^{\mu-\delta},
		\end{gather*}
		where~$\mu$ and~$\delta$ are positive constants depending only on~$n$,~$q$,~$t$, and~$\gamma$.
		
		In addition, if~$\Omega$ is convex,~$S \subseteq \Omega$ is any measurable subset with positive measure, and~$u \in H^{1,2}_{\rho}(\Omega)$ with $\int_{S} u \, dx = 0$, then~\eqref{eq:sobolev-pesata} holds with
		\begin{equation*}
			\widehat{C} \coloneqq \frac{\mathrm{diam}(\Omega)^n}{n\abs*{S}}.
		\end{equation*}
	\end{theorem}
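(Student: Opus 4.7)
My strategy would follow the Riesz-potential approach adapted to the weighted setting, as in~\cite{ds-poinc,fms}. By density of $C^1_c(\Omega)$ in $H^{1,2}_{0,\rho}(\Omega)$, it suffices to establish the inequality for $u\in C^1_c(\Omega)$. For such $u$, the classical integral representation
\begin{equation*}
\abs{u(x)} \leq \frac{1}{n\abs{B_1}}\int_\Omega \frac{\abs{\nabla u(y)}}{\abs{x-y}^{n-1}}\,dy
\end{equation*}
holds and immediately produces the prefactor $\widehat{C}=(n\abs{B_1})^{-1}$ in the statement.

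The core step is to control the Riesz-type potential on the right-hand side by $\norma{\nabla u}_{L^2_\rho(\Omega)}$. I would split the integrand as a product of three factors: the first is $\abs{\nabla u}\rho^{1/2}$, the second combines a negative power of $\rho$ with a power of $\abs{x-y}$ chosen so that its $(2t)$-th power matches the integrand of~\eqref{eq:condizione-peso}, and the third is a pure negative power of $\abs{x-y}$ that absorbs the remaining singularity. Applying H\"older's inequality with the corresponding triple of exponents yields a pointwise estimate of the form
\begin{equation*}
\abs{u(x)} \leq \widehat{C}\,(C^*)^{1/(2t)}\,\norma{\nabla u}_{L^2_\rho(\Omega)}\,V(x),
\end{equation*}
where $V(x)$ is a Riesz/Morrey-type integral depending only on distances inside $\Omega$. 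The interplay between the singularity $\abs{x-y}^{-(n-1)}$, the weight exponent $t$, and the exponent $\gamma$ is governed by the admissibility condition $\gamma<n-2$ (or $\gamma=0$ if $n=2$), which guarantees that the relevant integrals converge.

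Taking the $L^q$ norm in $x$ of the pointwise bound reduces the proof to estimating $\norma{V}_{L^q(\Omega)}$. This is carried out via a Marcinkiewicz/Young-type interpolation on the fractional integral $V$: a careful tracking of exponents shows that $V\in L^q(\Omega)$ precisely when $q<2^*(t)$, with the sharp threshold $1/2^*(t)=1/2-1/n+(1/t)(1/2-\gamma/(2n))$ arising from the arithmetic of the three Hölder exponents. The auxiliary parameters $\mu,\delta$ in the stated form of $C_M$ are exactly the Young/weak-type interpolation exponents produced at this step.

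For the convex-domain variant, the space $H^{1,2}_{0,\rho}(\Omega)$ is replaced by $H^{1,2}_\rho(\Omega)$ and density is in $C^1(\overline\Omega)$. The only structural change is to replace the representation formula by its Poincar\'e analogue on a convex domain: integrating $\nabla u$ along segments from $x$ to points of $S$ and averaging over $S$ one obtains $\abs{u(x)}=\abs{u(x)-\langle u\rangle_S}\leq \frac{\mathrm{diam}(\Omega)^n}{n\abs{S}}\int_\Omega \abs{\nabla u(y)}\abs{x-y}^{-(n-1)}\,dy$, which produces the new $\widehat{C}$ stated in the theorem; the rest of the argument is identical. The main obstacle I expect is precisely the Marcinkiewicz/Young step: the balance of exponents has to be tight enough to recover both the sharp critical exponent $2^*(t)$ and the explicit form of the constant $C_M$ without loss.
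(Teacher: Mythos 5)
The paper does not actually prove this theorem: it only cites Theorem~8, Remark~4, and Corollary~2 of~\cite{fms}, which in turn rely on Theorems~3.1--3.2 of~\cite{ds-poinc}. So I can only compare your plan with the proof in those references, which does indeed use the representation formula, a H\"older split involving the weight, and the Riesz potential estimates of Gilbarg--Trudinger (Lemmas 7.12 and 7.16). Your plan is on the right track in all of these structural respects, including the correct handling of the convex/mean-zero variant via GT Lemma 7.16.

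However, there is a genuine gap in the H\"older splitting step. You propose to factor
\begin{equation*}
\frac{\abs*{\nabla u(y)}}{\abs{x-y}^{n-1}} = \underbrace{\abs*{\nabla u}\rho^{1/2}}_{\text{in } L^2} \cdot \underbrace{\rho^{-1/2}\abs{x-y}^{-\gamma/(2t)}}_{\text{in } L^{2t}} \cdot \underbrace{\abs{x-y}^{-(n-1-\gamma/(2t))}}_{\text{in } L^{m}}
\end{equation*}
with $\frac{1}{m}=1-\frac{1}{2}-\frac{1}{2t}=\frac{t-1}{2t}$, producing a $u$-independent $V(x)=\bigl(\int_\Omega \abs{x-y}^{-m(n-1-\gamma/(2t))}\,dy\bigr)^{1/m}$. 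But this integral \emph{always diverges}: one computes
\begin{equation*}
m\left(n-1-\frac{\gamma}{2t}\right)=\frac{2t(n-1)-\gamma}{t-1} > n \quad \Longleftrightarrow \quad t(n-2)+n-\gamma>0,
\end{equation*}
which holds for every admissible $t>1$ and $\gamma<n-2$ (or $\gamma=0$, $n=2$). So $V\equiv+\infty$ and the claimed pointwise bound $\abs{u(x)}\leq \widehat{C}(C^*)^{1/(2t)}\norma{\nabla u}_{L^2_\rho}V(x)$ is vacuous. Decoupling the gradient factor completely is too greedy: there is not enough room in the remaining exponents to keep the distance integral finite for a single fixed $x$.

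The correct argument, which also explains the exponent $1/(2t)'$ on $C_M$ in the stated formula for $C_\mathcal{S}$, is a \emph{two}-factor H\"older split that leaves part of the singularity attached to the gradient term. Write
\begin{equation*}
\int_\Omega \frac{\abs*{\nabla u(y)}}{\abs{x-y}^{n-1}}\,dy = \int_\Omega \frac{\abs*{\nabla u}\rho^{1/2}}{\abs{x-y}^{\,n-1-\gamma/(2t)}}\cdot \frac{\rho^{-1/2}}{\abs{x-y}^{\gamma/(2t)}}\,dy,
\end{equation*}
apply H\"older with conjugate exponents $(2t)'$ and $2t$, and bound the second factor by $(C^\ast)^{1/(2t)}$. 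This leaves
\begin{equation*}
\abs{u(x)}^{(2t)'} \leq \widehat{C}^{(2t)'}(C^\ast)^{(2t)'/(2t)} \int_\Omega \frac{g(y)}{\abs{x-y}^{(2t)'(n-1-\gamma/(2t))}}\,dy, \qquad g\coloneqq \abs*{\nabla u}^{(2t)'}\rho^{(2t)'/2},
\end{equation*}
where the right-hand side is a Riesz potential $V_\mu g$ with $\mu = 1 - \frac{(2t)'}{n}\bigl(n-1-\frac{\gamma}{2t}\bigr)$. Since $g\in L^{2/(2t)'}$ with $\norma{g}_{L^{2/(2t)'}}=\norma{\nabla u}_{L^2_\rho}^{(2t)'}$, GT Lemma 7.12 bounds $\norma*{V_\mu g}_{L^{q/(2t)'}}$ for every $q$ with $\delta = (2t)'\bigl(\frac{1}{2}-\frac{1}{q}\bigr) < \mu$; unwinding this inequality reproduces exactly $q < 2^\ast(t)$. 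Raising to the power $1/(2t)'$ returns the $L^q$-bound for $u$ with constant $\widehat{C}\,(C^\ast)^{1/(2t)}(C_M)^{1/(2t)'}$, where $C_M$ (and the parameters $\mu,\delta$) come verbatim from GT Lemma 7.12. In short: the potential lemma must be applied to $u^{(2t)'}$, not used merely to measure a $u$-independent auxiliary function.
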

	
	\begin{remark}
		Note that the largest value of~$2^\ast(t)$ is attained for the limiting values of~$t$ and~$\gamma$. Therefore,~\eqref{eq:sobolev-pesata} holds true for any~$1\leq q <2_M$ where
		\begin{equation*}
			\label{eq:def-2M}
			\frac{1}{2_M} = \frac{1}{2} - \frac{1}{n} + \frac{p-2}{p-1} \frac{1}{n}.
		\end{equation*}
		Moreover, it follows that~$2_M>2$.
	\end{remark}
	
	\begin{corollary}[Weighted Poincaré inequality]
		\label{cor:w-poinc}
		Let~$\Omega \subseteq \R^n$ be a bounded domain,~$\rho$ a non-negative weight function fulfilling~\eqref{eq:condizione-peso}, and~$u \in H^{1,2}_{0,\rho}(\Omega)$. Then, we have
		\begin{equation}
			\label{eq:poincare-pesata}
			\int_\Omega u^2 \, dx \leq \mathcal{C}_P(\Omega)\, C_\mathcal{S}^2 \int_\Omega \rho \,\abs{\nabla u}^2 \, dx
		\end{equation}
		with~$\mathcal{C}_P(\Omega) \to 0$ as~$\abs{\Omega} \to 0$. Moreover, given any~$\theta \in (0,1)$, we can assume that
		\begin{equation*}
			\mathcal{C}_P(\Omega) \leq \abs{\Omega}^\frac{2\theta}{(p-1)n}.
		\end{equation*}
		In addition, if~$\Omega$ is convex and~$u \in H^{1,2}_{\rho}(\Omega)$ vanishes almost everywhere on a measurable subset with positive measure~$S \subseteq \Omega$, then~\eqref{eq:poincare-pesata} holds with~$\Omega \setminus S$ in place of~$\Omega$.
	\end{corollary}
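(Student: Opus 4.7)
The plan is to derive \eqref{eq:poincare-pesata} from the weighted Sobolev inequality of Theorem~\ref{th:w-sobolev} via a standard H\"older interpolation. The crucial point is that the limiting exponent $2_M$ of the embedding satisfies $2_M>2$, which leaves a strictly positive exponent to spare when interpolating back to $L^2$ and is what produces the decay factor $\mathcal{C}_P(\Omega)\to 0$.

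First, I fix any $q\in(2,2_M)$, which is admissible since $2_M>2$. By H\"older's inequality,
$$\int_\Omega u^2\,dx \leq \abs{\Omega}^{1-2/q}\left(\int_\Omega \abs{u}^q\,dx\right)^{\!2/q}\!,$$
and by \eqref{eq:sobolev-pesata} applied at exponent $q$ the last factor is bounded from above by $C_\mathcal{S}^{\,2}\int_\Omega \rho\,\abs{\nabla u}^2\,dx$. Combining the two estimates proves \eqref{eq:poincare-pesata} with $\mathcal{C}_P(\Omega)=\abs{\Omega}^{1-2/q}$, which clearly tends to $0$ as $\abs{\Omega}\to 0$.

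Second, for the quantitative estimate I compute, using $\frac{1}{2_M}=\frac{1}{2}-\frac{1}{n}+\frac{p-2}{(p-1)n}=\frac{1}{2}-\frac{1}{(p-1)n}$, that
$$1-\frac{2}{2_M}=\frac{2}{(p-1)n}.$$
Hence, given any $\theta\in(0,1)$, I can pick $q\in(2,2_M)$ sufficiently close to (but strictly below) $2_M$ so that $1-2/q=2\theta/((p-1)n)$, which yields the asserted bound $\mathcal{C}_P(\Omega)\leq\abs{\Omega}^{2\theta/((p-1)n)}$. The exclusion $\theta=1$ is exactly the price one pays for the strict inequality $q<2_M$ in Theorem~\ref{th:w-sobolev}.

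Finally, for the convex variant with $u\equiv 0$ a.e.~on $S$: the hypothesis $\int_S u\,dx=0$ of the second part of Theorem~\ref{th:w-sobolev} is automatically satisfied, so the weighted Sobolev inequality still applies (now with the constant $\widehat C=\mathrm{diam}(\Omega)^n/(n\abs S)$). Moreover, vanishing of $u$ on $S$ forces $\nabla u=0$ a.e.~on $S$, so both $\int_\Omega u^2\,dx$ and $\int_\Omega \rho\abs{\nabla u}^2\,dx$ coincide with their analogues over $\Omega\setminus S$; performing H\"older on $\Omega\setminus S$ rather than $\Omega$ therefore produces the factor $\abs{\Omega\setminus S}^{1-2/q}$ in place of $\abs{\Omega}^{1-2/q}$, which is the statement with $\Omega\setminus S$ in place of $\Omega$. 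The proof presents no essential obstacle; the only point requiring care is the bookkeeping of the exponent relation between $q$, $2_M$, and $\theta$.
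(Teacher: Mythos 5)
Your argument is correct and is the standard derivation: interpolate $L^2$ between $L^1$ and $L^q$ by H\"older with any $q\in(2,2_M)$, apply the weighted Sobolev inequality~\eqref{eq:sobolev-pesata}, and observe that $1-2/q\uparrow 1-2/2_M=\tfrac{2}{(p-1)n}$ as $q\uparrow 2_M$, which yields the bound $\mathcal{C}_P(\Omega)\leq\abs{\Omega}^{2\theta/((p-1)n)}$ for every $\theta\in(0,1)$. Your treatment of the convex case is also correct: vanishing of $u$ a.e.\ on $S$ gives both $\int_S u\,dx=0$ (so the second part of Theorem~\ref{th:w-sobolev} applies) and $\nabla u=0$ a.e.\ on $S$ (so the $L^2$- and $L^2_\rho$-integrals localize to $\Omega\setminus S$), and running H\"older over $\Omega\setminus S$ produces $\abs{\Omega\setminus S}^{1-2/q}$.

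Note that the paper does not actually prove Corollary~\ref{cor:w-poinc} in-house: it cites it, together with Theorem~\ref{th:w-sobolev}, from Theorem~8, Remark~4, and Corollary~2 of~\cite{fms} (originally Theorems~3.1--3.2 of~\cite{ds-poinc}), so there is no internal argument to compare with; yours is the canonical one and essentially what those references do. The one point worth flagging explicitly, which you gesture at but could state more precisely, is that the Sobolev constant $C_\mathcal{S}$ in Theorem~\ref{th:w-sobolev} depends on the chosen exponent $q$ (through $t$, $\mu$, $\delta$, and $C_M$), so as you push $q\uparrow 2_M$ to approach $\theta=1$ the constant $C_\mathcal{S}^2$ appearing in~\eqref{eq:poincare-pesata} changes with $\theta$. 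The wording ``we can assume'' in the statement silently permits this $\theta$-dependence of $C_\mathcal{S}$; this, rather than any peculiarity of your particular choice of $q$, is why $\theta=1$ is excluded.
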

	
	In order to exploit~\eqref{eq:sobolev-pesata} and~\eqref{eq:poincare-pesata}, when~$p>2$, we need to suppose that
	\begin{equation}
		\label{eq:assunzione-pesi}
		\mbox{at least one of } \abs{\nabla u_1}^{p-2} \mbox{ and }\abs{\nabla u_2}^{p-2} \mbox{ fulfills~\eqref{eq:condizione-peso},}
	\end{equation}
	for some admissible values of~$t$ and~$\gamma$. Moreover, we assume a universal bound on the gradient, that is
	\begin{equation}
		\label{eq:bound-gradiente}
		\norma{\nabla u_1}_{L^\infty(\Omega)} \leq C_1, \quad \norma{\nabla u_2}_{L^\infty(\Omega)} \leq C_1
	\end{equation}
	for some constant~$C_1>0$.
	
	We will later verify that, for our problems~\eqref{eq:mainprob} and~\eqref{eq:mainprob-crit}, the conditions~\eqref{eq:assunzione-pesi} and~\eqref{eq:bound-gradiente} are satisfied by the appropriate functions.
	
	\subsection{Comparison principle for small domains.}
	
	First, we address the issue of replacing Alexandroff-Bakelman-Pucci estimate in~\cite{cicopepo} with an appropriate tool for our purposes. To this end, we establish a comparison principle for small domains for weak subsolutions and supersolutions to~\eqref{eq:eqforu}, involving the~$L^\infty$-norm of the right-hand side.
	
	Our proof is partially inspired by Lemma~6.2.1 in~\cite{ps} and is based on an iterative technique. The result is proved for~$p>1$. Furthermore, when~$p>2$, we will extensively use the weighted Sobolev inequality and the weighted Poincaré inequality of Theorem~\ref{th:w-sobolev} and Corollary~\ref{cor:w-poinc}.
	
	\begin{proposition}
		\label{prop:wcp}
		Let~$\Omega \subseteq \R^n$ be a bounded domain, and let~$u_1,u_2\in C^1(\overline{\Omega})$ be a weak subsolution and a weak supersolution to~\eqref{eq:eqforu}, respectively. Moreover, assume that~$g_i(\cdot,u_i) \in L^\infty(\Omega)$ for~$i=1,2$, that~$c \in L^\infty(\Omega)$, and that~$u_1 \leq u_2$ on~$\partial\Omega$.
		
		For~$1<p<2$, assume that~\eqref{eq:bound-gradiente} holds. Then, there exists
		a small~$\delta \in (0,1)$ such that
		\begin{equation*}
			\norma{(u_1-u_2)_{+}}_{L^\infty(\Omega)} \leq \mathscr{K} \,\norma{g_1(\cdot,u_1)-g_2(\cdot,u_1)}_{L^\infty(\Omega)}
		\end{equation*}
		for some constant~$\mathscr{K} \geq 1$, provided that~$\abs{\Omega} \leq \delta$. Here,~$\delta$ depends only on~$n$,~$p$,~$\norma*{c}_{L^\infty(\Omega)}$, while~$\mathscr{K}$ depends only on~$n$,~$p$,~$L$, and~$C_1$.
		
		For~$p>2$, assume that~\eqref{eq:assunzione-pesi} holds. Then, the same conclusion follows with~$\delta \in (0,1)$ and~$\mathscr{K} \geq 1$, now depending only on~$n$,~$p$,~$\norma*{c}_{L^\infty(\Omega)}$, and an upper bound for~$C_\mathcal{S}$.
	\end{proposition}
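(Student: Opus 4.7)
Set $v \coloneqq u_1-u_2$ and, for each $k \geq 0$, consider the truncation $\phi_k \coloneqq (v-k)_+$. Since $u_1 \leq u_2$ on $\partial\Omega$, an approximation argument allows us to use $\phi_k$ as a non-negative test function supported in $\Omega$. Subtracting the weak subsolution inequality for $u_1$ from the weak supersolution inequality for $u_2$ yields
\begin{equation*}
\int_\Omega \bigl\langle \abs{\nabla u_1}^{p-2}\nabla u_1 - \abs{\nabla u_2}^{p-2}\nabla u_2,\,\nabla\phi_k \bigr\rangle dx + \int_\Omega c\, v\, \phi_k\, dx \leq \int_\Omega [g_1(x,u_1)-g_2(x,u_2)]\,\phi_k\, dx.
\end{equation*}
On the set $A(k)\coloneqq \{v>k\}$ one has $\nabla\phi_k=\nabla v$, so the fundamental inequality \eqref{eq:disp-basso} bounds the first integral from below by $c(p)\int_\Omega \rho\,\abs{\nabla\phi_k}^2\,dx$, with weight $\rho \coloneqq (\abs{\nabla u_1}+\abs{\nabla u_2})^{p-2}$.

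For the right-hand side I would use the decomposition $g_1(x,u_1)-g_2(x,u_2)=[g_1(x,u_1)-g_2(x,u_1)]+[g_2(x,u_1)-g_2(x,u_2)]$: the first bracket is controlled by $G\coloneqq \norma{g_1(\cdot,u_1)-g_2(\cdot,u_1)}_{L^\infty(\Omega)}$, while the second is estimated on $A(k)$ by $L\,v$ via the (local) Lipschitz constant $L$ of $g_2(x,\cdot)$. Combining with the lower bound above and writing $v=\phi_k+k$ on $A(k)$, one obtains the energy-type inequality
\begin{equation*}
c(p)\int_\Omega \rho\,\abs{\nabla\phi_k}^2\,dx \;\leq\; (\norma{c}_{L^\infty(\Omega)}+L)\int_{A(k)} \phi_k^2\,dx \;+\; \bigl[(\norma{c}_{L^\infty(\Omega)}+L)\,k+G\bigr]\int_{A(k)} \phi_k\,dx.
\end{equation*}
For $p>2$, the weighted Poincaré inequality of Corollary~\ref{cor:w-poinc} controls $\int\phi_k^2$ by $\mathcal{C}_P(\Omega)\,C_\mathcal{S}^2 \int\rho\,\abs{\nabla\phi_k}^2$; since $\mathcal{C}_P(\Omega)\to 0$ as $\abs{\Omega}\to 0$, one chooses $\delta>0$ small enough to absorb the first term on the right into the left. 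For $1<p<2$ one exploits \eqref{eq:bound-gradiente}: since $p-2<0$, $\rho \geq (2C_1)^{p-2}$ so one reduces to an unweighted $H^1_0$ estimate and absorbs through the classical Poincaré inequality. In either case one is left with $\int\rho\,\abs{\nabla\phi_k}^2 \lesssim [(\norma{c}_{L^\infty}+L)k+G]\int_{A(k)}\phi_k\,dx$.

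Applying the weighted Sobolev inequality of Theorem~\ref{th:w-sobolev} (or the classical one, for $p<2$) together with Hölder's inequality produces
\begin{equation*}
\norma{\phi_k}_{L^q(\Omega)} \leq C\,\bigl[(\norma{c}_{L^\infty(\Omega)}+L)\,k+G\bigr]\,\abs{A(k)}^{1-1/q}
\end{equation*}
for some $q>2$ (coming from $2_M$ in the weighted case or from the Sobolev exponent in the classical one). Combined with the Chebyshev-type bound $\abs{A(h)}\leq(h-k)^{-q}\norma{\phi_k}_{L^q}^q$ for $h>k$, this yields the recursive inequality $\abs{A(h)}\leq C'[(\norma{c}_{L^\infty}+L)k+G]^q(h-k)^{-q}\abs{A(k)}^{q-1}$ with $q-1>1$. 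Restricting attention to $k\in[0,\mathscr{K}G]$, so that the bracket is comparable to $G$, a standard Stampacchia iteration on the levels $k_j=\mathscr{K}G(1-2^{-j})$ gives $\abs{A(\mathscr{K}G)}=0$ once the initial measure $\abs{A(0)}\leq\abs{\Omega}\leq\delta$ is sufficiently small, which is the desired $L^\infty$ bound. The main technical obstacle is the absorption step in the $p>2$ case: one must choose $\delta$ so that $\mathcal{C}_P(\Omega)$ is small enough to compensate for $C_\mathcal{S}^2$ and for $(\norma{c}_{L^\infty}+L)$, invoking the explicit bound $\mathcal{C}_P(\Omega)\lesssim\abs{\Omega}^{2\theta/((p-1)n)}$ in Corollary~\ref{cor:w-poinc} and tracking the dependence of $C_\mathcal{S}$ on $\abs{\Omega}$ through $C_M$ in Theorem~\ref{th:w-sobolev}.
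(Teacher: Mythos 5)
Your argument is correct in outline, but it takes a genuinely different route from the paper's. The paper follows Pucci--Serrin's Lemma~6.2.1: it sets $\mathsf{k}\coloneqq\norma{g_1(\cdot,u_1)-g_2(\cdot,u_2)}_{L^\infty(\Omega)}$, defines $w\coloneqq(u_1-u_2)_++\mathsf{k}$, tests the equation against the nonlinear convex cut-off $\psi(w)$ built from a power $v(w)=w^r$, and runs a Moser-type iteration over exponents $r_j = \chi^j$ to produce a reverse Hölder chain $\norma{w}_{L^{2\chi^j}}\le K^{\Sigma_{j-1}}\chi^{\Sigma'_{j-1}}\norma{w}_{L^2}$; the $L^2$ norm of $(u_1-u_2)_+$ is then controlled by a separate energy estimate plus the weighted Poincaré inequality, whose constant vanishes as $\abs{\Omega}\to 0$ and allows absorption. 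You instead use the De Giorgi--Stampacchia level-set truncations $\phi_k=(v-k)_+$, derive a nonlinear decay inequality $\abs{A(h)}\lesssim(h-k)^{-q}\abs{A(k)}^{q-1}$ via weighted Sobolev $+$ Chebyshev, and iterate on the geometric levels $k_j=\mathscr{K}G(1-2^{-j})$. Both approaches yield the same conclusion, with the smallness of $\abs{\Omega}$ entering through the Poincaré constant in the absorption step (yours) or through the combination of the Poincaré estimate with the Moser constant $K$ (the paper's). The Stampacchia route is arguably more elementary; the paper's Moser route is more readily reused in the weak Harnack and local boundedness proofs of the same section.

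Two points deserve care. First, you split $g_1(x,u_1)-g_2(x,u_2)$ as $[g_1(x,u_1)-g_2(x,u_1)]+[g_2(x,u_1)-g_2(x,u_2)]$ and control the second bracket by a Lipschitz constant $L$ of $g_2(x,\cdot)$ --- but the proposition does not assume any such Lipschitz regularity of $g_2$ in the second argument, only that $g_i(\cdot,u_i)\in L^\infty(\Omega)$. The paper's proof avoids this entirely by working directly with $\mathsf{k}=\norma{g_1(\cdot,u_1)-g_2(\cdot,u_2)}_{L^\infty}$; any Lipschitz nonlinearity is meant to be absorbed into the $c$-term of~\eqref{eq:eqforu} before applying the proposition (this is exactly what happens via $c_\lambda$ in the applications). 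Without that assumption, the second bracket in your decomposition is not controlled, and the appearance of $g_2(\cdot,u_1)$ rather than $g_2(\cdot,u_2)$ in the statement is best read as shorthand for the latter. Second, your Stampacchia recursion has a constant proportional to $[(\norma{c}_{L^\infty}+L)k+G]^q$ which is not uniform in $k$, and the restriction ``$k\in[0,\mathscr{K}G]$ so the bracket is comparable to $G$'' is self-referential in $\mathscr{K}$; to make this rigorous you must first run the iteration with an arbitrary target level $d$, then simultaneously solve for $\mathscr{K}$ and $\delta$ from the resulting inequality $C''[(\norma{c}_{L^\infty}+L)\mathscr{K}+1]\abs{\Omega}^{(q-2)/q}\le\mathscr{K}$. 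This is doable but needs to be spelled out; the paper's two-step structure (Moser iteration followed by a separate $L^2$-absorption) sidesteps this circularity cleanly.
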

	\begin{proof}
		Set~$\mathsf{k}\coloneqq\norma{g_1(\cdot,u_1)-g_2(\cdot,u_2)}_{L^\infty(\Omega)}$ and define
		\begin{equation*}
			w\coloneqq(u_1-u_2)_{+}+\mathsf{k}.
		\end{equation*}
		Clearly,~$w \geq \mathsf{k}$ in~$\Omega$ and~$w = \mathsf{k}$ on $\partial\Omega$ with~$w \in W^{1,2}(\Omega) \cap L^\infty(\Omega)$.
		
		Fix~$\mathsf{k}<l<m$ and define, as in Lemma~6.2.1 of~\cite{ps}, the functions
		\begin{equation*}
			\label{eq:defpsi}
			\psi(t)\coloneqq\frac{r^q}{q}
			\begin{cases}
				\begin{aligned}
					& 0 						&& \mbox{if } t\leq l, \\
					& t^q-l^q 					&& \mbox{if } l<t<m, \\
					& qm^{q-1}t-(q-1)m^q-l^q	&& \mbox{if } t\geq m;
				\end{aligned}
			\end{cases}
		\end{equation*}
		\begin{equation}
			\label{eq:defv}
			v(t)\coloneqq
			\begin{cases}
				\begin{aligned}
					& l^r					&& \mbox{if } t\leq l, \\
					& t^r					&& \mbox{if } l<t<m, \\
					& rm^{r-1}t-(r-1)m^r	&& \mbox{if } t\geq m,
				\end{aligned}
			\end{cases}
		\end{equation}
		where~$q,r \in \R$ with~$q\geq 1$, and~$r$ is determined by the relation
		\begin{equation}
			\label{eq:rel_qr}
			q-1=2(r-1),
		\end{equation}
		so that~$r\geq1$. Moreover, observe that~$\psi$ and~$v$ are non-negative, convex, monotone increasing, piecewise smooth except for corners at~$t=l$, and linear when~$t \geq m$.
		
		Now, we test~\eqref{eq:eqforu} with~$\psi(w)$. Note that~$\psi(w)$ vanishes in a neighborhood of~$\partial\Omega$. Additionally, when~$w \leq l$, we have~$\psi(w)=0$, so the integrals need to be evaluated only on the set~$\Omega_+ \coloneqq \{x \in \Omega \,\lvert\, w(x)>l\}$, where~$(u_1-u_2)_{+}=u_1-u_2>0$. Subtracting, we get
		\begin{equation*}
			\begin{split}
				\int_{\Omega}& \left\langle \abs{\nabla u_1}^{p-2}\,\nabla u_1 - \abs{\nabla u_2}^{p-2}\,\nabla u_2, \nabla (u_1-u_2) \right\rangle \chi_{\{u_1>u_2\}} \psi'(w) + c \left(u_1-u_2\right) \psi(w) \, dx \\
				&\leq \int_{\Omega} f \psi(w) \, dx,
			\end{split}
		\end{equation*}
		where~$f \coloneqq g_1(\cdot,u_1)-g_2(\cdot,u_2)$. On~$\Omega_+$ we have
		\begin{equation*}
			\abs*{f - c \left(u_1-u_2\right)} \leq \bigl(1+\norma*{c}_{L^\infty(\Omega)}\bigr) w,
		\end{equation*}
		therefore, exploiting~\eqref{eq:disp-basso}, we deduce
		\begin{equation}
			\label{eq:test1}
			c(p) \int_{\Omega} \left(\abs*{\nabla u_1}+\abs*{\nabla u_2} \right)^{p-2} \abs*{\nabla (u_1-u_2)_+}^2 \,\psi'(w) \, dx \leq \left(1+\norma*{c}_{L^\infty(\Omega)}\right) \int_{\Omega} w \psi(w) \, dx.
		\end{equation}
		To proceed further, observe that, by~\eqref{eq:rel_qr}, it follows that~$\psi'(t)= \left(v'(t)\right)^2$. Therefore,
		\begin{equation*}
			\psi(t) = \int_{0}^t \left(v'(s)\right)^2 ds \leq v'(t)v(t).
		\end{equation*}
		Moreover, by~\eqref{eq:defv}, we have~$tv'(t) \leq rv(t)$. Now, writing~$v=v(w)$, we obtain
		\begin{gather*}
			\abs*{\nabla (u_1-u_2)_+}^2 \,\psi'(w) = \abs*{\nabla w}^2 \,\psi'(w) = \abs*{\nabla v}^2, \\
			w \psi(w) \leq wv'(w)v(w) \leq r v(w)^2 \leq r^2 v^2.
		\end{gather*}
		As a result,~\eqref{eq:test1} entails
		\begin{equation}
			\label{eq:test2}
			c(p) \int_{\Omega} \left(\abs*{\nabla u_1}+\abs*{\nabla u_2} \right)^{p-2} \abs*{\nabla v}^2 \, dx \leq \left(1+\norma*{c}_{L^\infty(\Omega)}\right) r^2 \int_{\Omega} v^2 \, dx.
		\end{equation}
		
		We now estimate an appropriate Lebesgue norm of~$v$ in terms of the left-hand side of~\eqref{eq:test2}. Ultimately, we will return to~$w$ and apply the iterative technique by choosing a sequence of values for~$r$. To proceed, we distinguish between two cases based on the value of~$p$.\newline
		
		\textbf{Case}~$\mathbf{1<p<2}$. By defining~$\beta \coloneqq c(p) (2C_1)^{p-2}$ and exploiting~\eqref{eq:bound-gradiente}, form~\eqref{eq:test2}, we deduce
		\begin{equation}
			\label{eq:test3}
			\int_{\Omega}\, \abs*{\nabla v}^2 \, dx \leq \frac{1+\norma*{c}_{L^\infty(\Omega)}}{\beta}\,r^2 \int_{\Omega}\, v^2 \, dx.
		\end{equation}
		By~\eqref{eq:defv}, it follows that~$v = l^r$ in a neighborhood of~$\partial\Omega$, hence Sobolev inequality implies
		\begin{equation*}
			\begin{split}
				\norma{v}_{L^{2^\ast}\!(\Omega)} &\leq \norma{v-l^r}_{L^{2^\ast}\!(\Omega)} + \norma{l^r}_{L^{2^\ast}\!(\Omega)} \\
				&\leq
				\begin{cases}
					\begin{aligned}
						& S^{-1} \norma{\nabla v}_{L^{2}(\Omega)} + \abs{\Omega}^{-\frac{1}{n}}\norma{v}_{L^{2}(\Omega)}					&& \text{if} \; n\geq 3, \\
						& S^{-1} \norma{\nabla v}_{L^{2}(\Omega)} + \abs{\Omega}^{-\frac{1}{4}}\norma{v}_{L^{2}(\Omega)}					&& \text{if} \; n=2,
					\end{aligned}
				\end{cases}
			\end{split}
		\end{equation*}
		where we have chosen~$2^\ast=4$ when~$n=2$. Here we used that
		\begin{equation*}
			\norma{l^r}_{L^{2^\ast}\!(\Omega)} = \abs{\Omega}^{\frac{1}{2^\ast}-\frac{1}{2}} \norma{l^r}_{L^{2}(\Omega)}
		\end{equation*}
		and that~$l^r \leq v$ in~$\Omega$. Since we may assume that~$\abs{\Omega} \leq 1$,~\eqref{eq:test3} yields
		\begin{equation}
			\label{eq:iter-0}
			\norma{v}_{L^{2^\ast}\!(\Omega)} \leq Kr \norma{v}_{L^{2}(\Omega)},
		\end{equation}
		where
		\begin{equation*}
			K \coloneqq \left(S^{-1} \sqrt{\frac{1+\norma*{c}_{L^\infty(\Omega)}}{\beta}}+1 \right) \abs{\Omega}^{-\frac{1}{\alpha(n)}} = \mathsf{K} \,\abs{\Omega}^{-\frac{1}{\alpha(n)}} \quad \mbox{with } \alpha(n) \coloneqq
			\begin{cases}
				\begin{aligned}
					& n		&& \text{if} \; n\geq 3, \\
					& 4		&& \text{if} \; n=2.
				\end{aligned}
			\end{cases}
		\end{equation*}
		Define now the set~$\Omega' \coloneqq \{x\in\Omega \,\lvert\, \mathsf{k}\leq w(x)<m\}$ and observe that~$v \leq w^r +(l^r-\mathsf{k}^r)$. Therefore, from~\eqref{eq:iter-0}, we infer
		\begin{equation*}
			\norma{w}^r_{L^{2^\ast r}(\Omega')} \leq Kr \left( \norma{w}^r_{L^{2r}(\Omega)}+\abs{\Omega}^\frac{1}{2} \left( l^r-\mathsf{k}^r \right) \right)
		\end{equation*}
		and letting~$l\to \mathsf{k}$,~$m\to + \infty$, by Fatou's lemma, we conclude that
		\begin{equation}
			\label{eq:iter-1}
			\norma{w}_{L^{2^\ast r}(\Omega)} \leq \left( Kr \right)^\frac{1}{r}  \norma{w}_{L^{2r}(\Omega)}.
		\end{equation}
		Let us set~$\chi \coloneqq 2^\ast/2>1$ and rewrite~\eqref{eq:iter-1} in the form
		\begin{equation*}
			\norma{w}_{L^{2\chi r}(\Omega)} \leq \left( Kr \right)^\frac{1}{r}  \norma{w}_{L^{2r}(\Omega)},
		\end{equation*}
		where every norm is finite since~$w \in L^\infty(\Omega)$.
		
		We are ready to carry out the iterative technique. First, choose~$r=1$, so that
		\begin{equation*}
			\norma{w}_{L^{2\chi}(\Omega)} \leq K \norma{w}_{L^{2}(\Omega)}.
		\end{equation*}
		Then, take~$r=\chi$, so that
		\begin{equation*}
			\norma{w}_{L^{2\chi^2}(\Omega)} \leq \left( K\chi \right)^\frac{1}{\chi}  \norma{w}_{L^{2\chi}(\Omega)} \leq K^{1+\frac{1}{\chi}} \chi^\frac{1}{\chi} \norma{w}_{L^{2}(\Omega)}.
		\end{equation*}
		Proceeding in this way, we obtain
		\begin{equation}
			\label{eq:iter-2}
			\norma{w}_{L^{2\chi^j}(\Omega)} \leq K^{\Sigma_{j-1}} \chi^{\Sigma'_{j-1}} \norma{w}_{L^{2}(\Omega)} \quad \mbox{for every } j \geq 1,
		\end{equation}
		where
		\begin{equation}
			\label{eq:sigmas}
			\Sigma_{j-1} \coloneqq \sum_{i=0}^{j-1} \frac{1}{\chi^i} \to \frac{\chi}{\chi-1}, \qquad \Sigma'_{j-1} \coloneqq \sum_{i=0}^{j-1} \frac{i}{\chi^i} \to \frac{\chi}{(\chi-1)^2},
		\end{equation}
		as~$j \to +\infty$. By letting~$j \to +\infty$ in~\eqref{eq:iter-2}, we deduce
		\begin{equation*}
			\norma{w}_{L^{\infty}(\Omega)} \leq
			\begin{cases}
				\begin{aligned}
					(Ke)^\frac{n}{2}&\norma{w}_{L^{2}(\Omega)}		&& \text{if} \; n\geq 3, \\
					4K^2 &\norma{w}_{L^{2}(\Omega)}					&& \text{if} \; n=2.
				\end{aligned}
			\end{cases}
		\end{equation*}
		Hence, the triangle inequality yields
		\begin{equation}
			\label{eq:iter-final}
			\norma{w}_{L^{\infty}(\Omega)} \leq
			\begin{cases}
				\begin{aligned}
					(Ke)^\frac{n}{2} &\norma{(u_1-u_2)_{+}}_{L^{2}(\Omega)} + (\mathsf{K}e)^\frac{n}{2} \,\mathsf{k}	&& \text{if} \; n\geq 3, \\
					4K^2 &\norma{(u_1-u_2)_{+}}_{L^{2}(\Omega)} + 4\mathsf{K}^2 \,\mathsf{k}		&& \text{if} \; n=2.
				\end{aligned}
			\end{cases}
		\end{equation}
		
		We aim to estimate~$\norma{(u_1-u_2)_{+}}_{L^{\infty}(\Omega)}$. Note that the simplest approach -- the~$L^{\infty}$-estimate of the~$L^{2}$-norm -- would yield a constant in terms of~$\abs{\Omega}$ that cancels out with the one hidden in~$K$. As a consequence,~$\norma{(u_1-u_2)_{+}}_{L^{\infty}(\Omega)}$ cannot be reabsorbed at the left-hand side.
		
		To overcome this issue, we start by testing~\eqref{eq:eqforu} with~$(u_1-u_2)_{+}$. Subtracting, we get
		\begin{equation*}
			\begin{split}
				\int_{\Omega} \left\langle \abs{\nabla u_1}^{p-2} \,\nabla u_1 - \abs{\nabla u_2}^{p-2} \,\nabla u_2, \nabla (u_1-u_2) \right\rangle \chi_{\{u_1>u_2\}} \, dx \leq \int_{\Omega} f \, (u_1-u_2)_+ \, dx,
			\end{split}
		\end{equation*}
		furthermore, observe that~$	\abs*{f} \, (u_1-u_2)_+ \leq w^2$ and, proceeding as above, we deduce
		\begin{equation}
			\label{eq:grad-w^2-1}
			\int_{\Omega}\, \abs*{\nabla (u_1-u_2)_+}^2 \, dx \leq \frac{1}{\beta} 	\int_{\Omega} w^2 \, dx.
		\end{equation}
		An application of the Poincaré inequality, together with~\eqref{eq:grad-w^2-1}, gives us
		\begin{equation*}
			\int_{\Omega} (u_1-u_2)_+^2 \, dx \leq \frac{1}{\beta} \left( \frac{\abs{\Omega}}{\abs{B_1}} \right)^{\!\! \frac{2}{n}} \int_{\Omega} w^2 \, dx \leq \frac{1}{\beta} \left( \frac{\abs{\Omega}}{\abs{B_1}} \right)^{\!\! \frac{2}{n}} \abs{\Omega} \norma{w}^2_{L^\infty(\Omega)}.
		\end{equation*}
		Combining this estimate with~\eqref{eq:iter-final}, we finally deduce that
		\begin{equation*}
			\norma{(u_1-u_2)_+}_{L^{\infty}(\Omega)} \leq \norma{w}_{L^{\infty}(\Omega)} \leq
			\begin{cases}
				\begin{aligned}
					2(\mathsf{K}e)^\frac{n}{2} &\, \mathsf{k}	&& \mbox{if } n\geq 3, \\
					8\mathsf{K}^2 &\, \mathsf{k}				&& \mbox{if } n=2,
				\end{aligned}
			\end{cases}
		\end{equation*}
		provided that
		\begin{equation*}
			\abs{\Omega} \leq
			\begin{cases}
				\begin{aligned}
					\min\left\{1, \left( \frac{\beta}{4(\mathsf{K}e)^n} \right)^{\!\! \frac{n}{2}} \abs{B_1} \right\}	&& \mbox{if } n\geq 3, \\
					\min\left\{1, \left( \frac{\beta}{(8\mathsf{K}^2)^2} \right)^{\!\! \frac{n}{2}} \abs{B_1}\right\}	&& \mbox{if } n=2.
				\end{aligned}
			\end{cases}
		\end{equation*}
		\medskip
		
		\textbf{Case}~$\mathbf{p>2}$. By setting~$\rho \coloneqq \left(\abs*{\nabla u_1}+\abs*{\nabla u_2} \right)^{p-2}$, we can rewrite~\eqref{eq:test2} as
		\begin{equation*}
			\label{eq:test3'}
			\int_{\Omega} \rho \,\abs*{\nabla v}^2 \, dx \leq \frac{1+\norma*{c}_{L^\infty(\Omega)}}{c(p)}\,r^2 \int_{\Omega} v^2 \, dx.
		\end{equation*}
		Now, choosing~$2<q<2_M$ and arguing as before, with~\eqref{eq:sobolev-pesata} in place of Sobolev inequality, we obtain
		\begin{equation*}
			\label{eq:iter-0'}
			\norma{v}_{L^{q}(\Omega)} \leq Kr \norma{v}_{L^{2}(\Omega)},
		\end{equation*}
		where, assuming that~$\abs{\Omega}\leq 1$, we define
		\begin{equation*}
			K \coloneqq \left(C_\mathcal{S} \sqrt{\frac{1+\norma*{c}_{L^\infty(\Omega)}}{c(p)}}+1 \right) \abs{\Omega}^{\frac{1}{q}-\frac{1}{2}} = \mathsf{K} \,\abs{\Omega}^{\frac{1}{q}-\frac{1}{2}}.
		\end{equation*}
		Note that the weight~$\rho$ satisfies~\eqref{eq:condizione-peso} since
		\begin{equation*}
			\frac{1}{\left(\abs*{\nabla u_1}+\abs*{\nabla u_2} \right)^{p-2}} \leq \frac{1}{\abs*{\nabla u_i}^{p-2}} \quad \mbox{for } i=1,2
		\end{equation*}
		and~\eqref{eq:assunzione-pesi} holds true. Setting~$\chi\coloneqq q/2>1$ and repeating the argument above, we replace~\eqref{eq:iter-final} with
		\begin{equation}
			\label{eq:iter-final'}
			\norma{w}_{L^{\infty}(\Omega)} \leq (Ke)^\frac{q}{q-2} \norma{(u_1-u_2)_{+}}_{L^{2}(\Omega)} + (\mathsf{K}e)^\frac{q}{q-2} \,\mathsf{k}.
		\end{equation}
		We now substitute~\eqref{eq:grad-w^2-1} with
		\begin{equation*}
			\label{eq:grad-w^2-1'}
			\int_{\Omega} \rho \,\abs*{\nabla (u_1-u_2)_+}^2 \, dx \leq \frac{1}{c(p)} \int_{\Omega} w^2 \, dx.
		\end{equation*}
		An application of the weighted Poincaré inequality~\eqref{eq:poincare-pesata} gives
		\begin{equation*}
			\int_{\Omega} (u_1-u_2)_+^2 \, dx \leq \frac{1}{c(p)} \,\mathcal{C}_P(\Omega) \,C_\mathcal{S}^2 \,\abs{\Omega} \norma{w}^2_{L^\infty(\Omega)}.
		\end{equation*}
		Finally, combining this estimate with~\eqref{eq:iter-final'}, we conclude that
		\begin{equation*}
			\norma{(u_1-u_2)_+}_{L^{\infty}(\Omega)} \leq \norma{w}_{L^{\infty}(\Omega)} \leq 2(\mathsf{K}e)^\frac{q}{q-2} \,\mathsf{k},
		\end{equation*}
		provided that
		\begin{equation*}
			\begin{split}
				\mathcal{C}_P(\Omega) &\leq \frac{1}{4} \, c(p) \, C_\mathcal{S}^{-2}(\mathsf{K}e)^{-\frac{2q}{q-2}} \quad \mbox{and} \quad \abs{\Omega} \leq 1,\\
				\mbox{or} \quad \abs{\Omega} &\leq	\min\left\{1, \left( 4c(p)^{-1}C_\mathcal{S}^2(\mathsf{K}e)^\frac{2q}{q-2} \right)^{\!\! -(p-1)n} \right\}.
			\end{split}
		\end{equation*}
	\end{proof}
	
	\subsection{Weak Harnack comparison inequality.}
	
	Next, we turn to the weak Harnack comparison inequality. When considering the difference of a supersolution and a subsolution to~\eqref{eq:mainprob}, it has already been established in Theorem~3.3 of~\cite{ds-har}, by exploiting Moser's iterative technique~\cite{mos-har} and Trudinger's improvement~\cite{trud-reg,trud-lin}. However, the constant appearing in the estimate depends on the radius of the ball in an unclear way.
	
	Following this proof closely, we show that it can be adapted to the case of~\eqref{eq:eqforu}, with an explicit dependence on the radius.
	
	\begin{theorem}
		\label{th:wh}
		Let~$\Omega \subseteq \R^n$ be an open set, and let~$u_1,u_2\in C^1(\Omega)$ be a weak subsolution and a weak supersolution to~\eqref{eq:eqforu} with~$p > 2$, respectively. Moreover, suppose that~\eqref{eq:assunzione-pesi}-\eqref{eq:bound-gradiente} hold true.
		
		Assume that~$\overline{B_{5R}(x_0)} \subseteq \Omega$ for some~$R \leq 1$. Let~$q > 2$ be any real number such that
		\begin{equation}
			\label{eq:defi-q>}
			q > \frac{2 \, 2_M}{2_M-2},
		\end{equation}
		and fix any~$2<\mathfrak{q}<2_M$, depending on~$n$,~$p$, and~$q$, such that
		\begin{equation}
			\label{eq:cond-q-0}
			q > \frac{2\mathfrak{q}}{\mathfrak{q}-2}.
		\end{equation}
		Finally, suppose that~$c \in L^\infty\!\left(B_{5R}(x_0)\right)$, that~$g_1(\cdot,u_1)-g_2(\cdot,u_2) \in L^q\!\left(B_{5R}(x_0)\right)$, define $\mathsf{k}\coloneqq\norma{g_1(\cdot,u_1)-g_2(\cdot,u_2)}_{L^q\left(B_{5R}(x_0)\right)}$, and assume that, for some~$\mathfrak{c}\geq 1$, it holds
		\begin{equation}
			\label{eq:positivita}
			u_1-u_2 \leq \mathfrak{c} \mathsf{k} \quad\mbox{in } B_{5R}(x_0).
		\end{equation}
		
		Then, for every~$0<s<\chi\coloneqq\mathfrak{q}/2$, there exists a constant~$\mathfrak{C} \geq 1$ such that
		\begin{equation}
			\label{eq:weak-harnak-fond}
			\mathcal{M}(R) \norma*{u_2-u_1}_{L^s(B_{2R}(x_0))} \leq \mathfrak{C} \left(\inf_{B_{R}(x_0)} (u_2-u_1) +2\mathfrak{c} \mathsf{k}\right) \!,
		\end{equation}
		where~$\mathcal{M}(R)$ is the function of the radius given by
		\begin{equation*}
			\mathcal{M}(R) \coloneqq  R^{-n/s} \left(c_\flat R\right)^{C_\natural/R^\frac{2}{\mathfrak{q}-2}}
		\end{equation*} 
		for some small~$c_\flat \in (0,1)$ and some large~$C_\natural \geq 1$.
		
		The constants~$\mathfrak{C}$,~$c_\flat$, and~$C_\natural$ depend only on~$n$,~$p$,~$s$,~$q$,~$C_1$,~$\norma*{c}_{L^\infty(B_{5R}(x_0))}$, as well as an upper bound for~$C_\mathcal{S}$ on~$B_{5R}(x_0)$.
	\end{theorem}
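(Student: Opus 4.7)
My strategy is to adapt the Moser--Trudinger iteration used in the proof of Theorem~3.3 of~\cite{ds-har} to the present setting, but crucially to track the $R$-dependence of every constant at every step. Let $w \coloneqq u_2 - u_1 + \mathfrak{c}\mathsf{k}$, so that $w \geq \mathsf{k}$ on $B_{5R}(x_0)$ by~\eqref{eq:positivita}, and set $\rho \coloneqq (\abs{\nabla u_1} + \abs{\nabla u_2})^{p-2}$, which satisfies~\eqref{eq:condizione-peso} thanks to~\eqref{eq:assunzione-pesi}. Subtracting the weak formulations of the sub- and supersolution gives, for every non-negative $\phi \in C^\infty_c(\Omega)$,
\begin{equation*}
\int \left\langle \abs{\nabla u_2}^{p-2}\nabla u_2 - \abs{\nabla u_1}^{p-2}\nabla u_1, \nabla \phi \right\rangle dx \geq \int \bigl(g_2(\cdot,u_2)-g_1(\cdot,u_1) - c(u_2-u_1)\bigr)\phi \, dx.
\end{equation*}
Testing with $\phi = w^\gamma \eta^2$ for suitable $\gamma \in \R$ and cut-offs $\eta$ supported in nested balls inside $B_{5R}(x_0)$, I apply~\eqref{eq:disp-basso} to produce a weighted Caccioppoli inequality for the auxiliary function $v \coloneqq w^{(\gamma+1)/2}$, where the right-hand side is controlled using $\norma{c}_{L^\infty}$, the $L^q$ norm $\mathsf k$, and the pointwise bound~\eqref{eq:positivita}.

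The positive-exponent iteration then proceeds by combining the Caccioppoli estimate with the weighted Sobolev inequality~\eqref{eq:sobolev-pesata} at exponent $\mathfrak{q}$, producing a reverse-Hölder chain $\norma{v}_{L^\mathfrak{q}(B_r)} \leq \Psi(r,r',\gamma,R)\norma{v}_{L^2(B_{r'})}$ that is iterated along the exponents $\gamma_j + 1 = 2\chi^j$ with $\chi = \mathfrak{q}/2 > 1$, on nested radii shrinking from $3R$ to $2R$. To bridge from the resulting $L^{s_0}$ bound down to $\inf_{B_R}(u_2-u_1)$, I employ Trudinger's logarithmic trick: testing with $\phi = \eta^2/w$ yields a BMO-type estimate for $\log w$, which combined with a quantitative John--Nirenberg inequality on $B_{3R}$ gives $\norma{w}_{L^{s_0}(B_{3R})}\norma{w^{-1}}_{L^{s_0}(B_{3R})} \leq C R^{2n/s_0}$ for some small $s_0 > 0$. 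The analogous Moser iteration applied to negative powers, tracked with the same $R$-dependence as the positive side, then completes the argument, and~\eqref{eq:weak-harnak-fond} follows after recalling the definition of $w$ and interpolating the result up to any $0 < s < \chi$.

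The main obstacle, and the key departure from~\cite{ds-har}, is the quantitative bookkeeping of $R$ through the infinite Moser chain. In the standard unweighted setting, a scaling trick reduces every iteration step to the unit ball and the $R$-dependence collapses into the single factor $R^{-n/s}$; in the spirit of Proposition~\ref{prop:wcp}, the remaining prefactors telescope to an $R$-independent constant thanks to the summability of $\Sigma_{j-1}$ and $\Sigma'_{j-1}$ in~\eqref{eq:sigmas}. Here the weight $\rho$ is given by the gradients of the fixed functions $u_1, u_2$ and is not an intrinsic power of $R$, so no rescaling is available; at each step $\Psi(r_j,r_{j+1},\chi^j,R)$ inherits an $R$-dependence both from the weighted Sobolev constant $C_\mathcal{S}$ (via the factor $|B_{r_j}|^{\mu-\delta}$ in $C_M$ of Theorem~\ref{th:w-sobolev}) and from the cut-off norm $(r_{j+1}-r_j)^{-1} \sim R^{-1}\chi^j$. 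Summing $\log \Psi(r_j,r_{j+1},\chi^j,R)$ across the infinite iteration, with a geometric choice of radii that balances the cut-off decay against the Sobolev scaling, yields a divergent contribution whose exponential produces exactly the non-polynomial factor $(c_\flat R)^{-C_\natural/R^{2/(\mathfrak{q}-2)}}$. Combined with the standard volume factor $R^{-n/s}$ this recovers $\mathcal{M}(R)^{-1}$ on the right of~\eqref{eq:weak-harnak-fond}. Propagating the same structural $R$-dependence through the Trudinger bridging step is routine once the positive-exponent iteration is set up, and the condition~\eqref{eq:cond-q-0} guarantees that the range $0 < s < \chi$ is indeed attained, closing the proof.
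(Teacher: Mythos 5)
Your outline captures the high-level skeleton of the paper's argument (Moser--Trudinger iteration following Damascelli--Sciunzi, with explicit $R$-dependence tracked at each step, using the weighted Sobolev/Poincar\'e machinery at exponent $\mathfrak{q}$), but you misidentify the mechanism that produces the non-polynomial factor $\mathcal{M}(R)$, and in doing so you dismiss the genuinely new quantitative content as routine.

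You attribute $(c_\flat R)^{C_\natural/R^{2/(\mathfrak{q}-2)}}$ to the Moser chain itself, claiming that summing $\log\Psi(r_j,r_{j+1},\chi^j,R)$ across the infinite iteration ``yields a divergent contribution whose exponential produces exactly the non-polynomial factor.'' This cannot be right: if the sum were genuinely divergent the iteration would break down entirely. For a \emph{fixed} starting exponent $r_0 > 0$ and radii $h_j \downarrow R$, the negative-power Moser chain produces exactly what it does in the classical setting --- a \emph{convergent} product, giving a constant of the form $C_4^{-2/r_0} R^{2\Sigma/r_0}$ with $\Sigma = \chi/(\chi-1)$ finite, i.e.\ \emph{polynomial} in $R$ with exponent proportional to $1/r_0$. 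The cut-off factors $R^{-1}\chi^j$ and the $|B_{r_j}|$-dependence of $C_M$ both enter with the decaying weights $2/r_j = (2/r_0)\chi^{-j}$, whose sum is finite. Nothing diverges.

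The non-polynomial behavior instead originates entirely in the step you call ``routine'': the Trudinger bridging of positive and negative exponents via exponential integrability. Setting $\beta=-1$ and working with $w = \log v$ (up to subtracting its mean on $B_{4R}$) gives $\norma{w}_{L^\mathfrak{q}(B_{4R})} \leq C_5$, but this alone does not yield an $R$-uniform John--Nirenberg constant. One must instead iterate the Caccioppoli inequality for $|w|^\beta$, $\beta \geq 1$, to obtain the moment bounds
\begin{equation*}
\Phi(\mathfrak{p},3R,w) \leq C_\flat R^{\gamma(\mathfrak{p})} \bigl(\Phi(\mathfrak{q},4R,w) + \mathfrak{p}\bigr),
\end{equation*}
and the crux is that $\gamma(\mathfrak{p}) \to -2/(\mathfrak{q}-2)$ as $\mathfrak{p}\to\infty$: the higher moments of $\log v$ on $B_{3R}$ \emph{grow} as $R \to 0$. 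Applying the root test to the power series for $\int e^{r_0|w|}$ then forces $r_0 \leq \mathcal{C} R^{2/(\mathfrak{q}-2)}$ for convergence uniform in $R$. It is only because the admissible initial exponent $r_0$ of the negative-power iteration must shrink like $R^{2/(\mathfrak{q}-2)}$ that the polynomial factor $R^{(n+2\Sigma)/r_0}$ and the iteration constant $C_4^{-2/r_0}$ become $(c_\flat R)^{C_\natural/R^{2/(\mathfrak{q}-2)}}$. Your proposed bound $\norma{w}_{L^{s_0}}\norma{w^{-1}}_{L^{s_0}} \leq C R^{2n/s_0}$ with an unspecified ``small $s_0$'' elides exactly this dependence of $s_0$ on $R$; without identifying $s_0 \sim R^{2/(\mathfrak{q}-2)}$ you have no route to the stated form of $\mathcal{M}(R)$, and the claim that this step propagates ``the same structural $R$-dependence'' as the positive iteration is not supported.
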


	\begin{remark}
		It is easy to see that the conditions in~\eqref{eq:defi-q>}-\eqref{eq:cond-q-0} are equivalent to requiring that~$q>2$ and
		\begin{equation*}
			\frac{2q}{q-2} < \mathfrak{q} < 2_M.
		\end{equation*}
	\end{remark}

	\begin{proof}[Proof of Theorem~\ref{th:wh}]
		Set~$\widehat{\mathsf{k}} \coloneqq \mathfrak{c} \mathsf{k} \geq \mathsf{k}$ and define
		\begin{equation*}
			v \coloneqq u_2-u_1+2\widehat{\mathsf{k}} \geq 0.
		\end{equation*}
		Note that~$v \in C^1(\Omega)$ and we may assume that~$v \geq \tau>0$ in~$B_{5R}(x_0)$. If this is not true, we can consider~$v+\tau$ and then let~$\tau\to 0$.
		
		Take~$\eta \in C^1_c(B_{5R}(x_0))$, with~$\eta\geq 0$ in~$\Omega$, and define
		\begin{equation*}
			\phi \coloneqq \eta^2 v^\beta \geq 0
		\end{equation*}
		for~$\beta<0$, so that
		\begin{equation*}
			\nabla\phi=2\eta v^\beta \,\nabla\eta + \beta\eta^2 v^{\beta-1} \,\nabla v.
		\end{equation*}
		By using~$\phi$ as test function in~\eqref{eq:eqforu}, we get
		\begin{equation*}
			\begin{split}
				\int_\Omega &-\!\left\langle \abs*{\nabla u_1}^{p-2}\,\nabla u_1, \nabla\phi \right\rangle - c \, u_1 \phi +g_1\left(x,u_1\right)\phi \, dx \\
				&\geq \int_\Omega -\left\langle \abs*{\nabla u_2}^{p-2}\,\nabla u_2, \nabla\phi \right\rangle - c \, u_2 \phi +g_2\left(x,u_2\right)\phi \, dx,
			\end{split}
		\end{equation*}
		that is
		\begin{equation}
			\label{eq:eq-diff-1}
			\begin{split}
				\int_\Omega &-\beta \eta^2 v^{\beta-1} \left\langle \abs*{\nabla u_2}^{p-2}\,\nabla u_2 - \abs*{\nabla u_1}^{p-2}\,\nabla u_1, \nabla v\right\rangle \\
				&- 2\eta v^\beta \left\langle \abs*{\nabla u_2}^{p-2}\,\nabla u_2 - \abs*{\nabla u_1}^{p-2}\,\nabla u_1, \nabla \eta\right\rangle dx \leq \int_\Omega c \left(u_2-u_1\right) \phi + f \phi \, dx,
			\end{split}
		\end{equation}
		where~$f \coloneqq g_1(\cdot,u_1)-g_2(\cdot,u_2)$. It is crucial to observe that~\eqref{eq:positivita} implies
		\begin{equation}
			\label{eq:stime-v}
			\abs*{u_2-u_1} \leq v \quad \mbox{and} \quad v\geq \widehat{\mathsf{k}} \quad\mbox{in } B_{5R}(x_0).
		\end{equation}
		With~\eqref{eq:stime-v} at hand and exploiting~\eqref{eq:disp-basso}-\eqref{eq:disp-alto}, we can estimate the terms of the left-hand side and the first term on the right-hand side of~\eqref{eq:eq-diff-1} as follows
		\begin{gather*}
			-\beta \left\langle \abs*{\nabla u_2}^{p-2}\,\nabla u_2 - \abs*{\nabla u_1}^{p-2}\,\nabla u_1, \nabla v\right\rangle \geq c(p) \,\abs{\beta} \,\rho \,\abs{\nabla v}^2,\\
			\abs*{2\eta v^\beta \left\langle \abs*{\nabla u_2}^{p-2}\,\nabla u_2 - \abs*{\nabla u_1}^{p-2}\,\nabla u_1, \nabla \eta\right\rangle} \leq 2C(p) \eta v^\beta \rho \,\abs{\nabla v}\abs{\nabla \eta}, \\
			\abs*{c  \left(u_2-u_1\right) \phi} \leq \norma*{c}_{L^\infty(B_{5R}(x_0))} \eta^2 v^{\beta+1},
		\end{gather*}
		where~$\rho \coloneqq \left(\abs*{\nabla u_1}+\abs*{\nabla u_2} \right)^{p-2}$. Then, by H\"older inequality, we obtain
		\begin{equation}
			\label{eq:rhs-wh}
			\int_\Omega f \phi \, dx = \int_\Omega \,\frac{f}{v} \, \eta^2 v^{\beta+1} \, dx \leq \left(\int_{\mathrm{supp}\,\eta} \,\abs*{\frac{f}{v}}^q \, dx \right)^{\!\!\frac{1}{q}} \left(\int_{\Omega} \left(\eta^2 v^{\beta+1}\right)^{q'} dx \right)^{\!\!\frac{1}{q'}}\!,
		\end{equation}
		where~$q'$ is the H\"older conjugate of~$q$. In light of~\eqref{eq:cond-q-0}, note that~$2<\mathfrak{q}<2_M$ satisfies
		\begin{equation}
			\label{eq:cond-q}
			q' < \frac{2\mathfrak{q}}{\mathfrak{q}+2} \quad \mbox{and} \quad q' < \frac{\mathfrak{q}}{2},
		\end{equation}
		and let~$\theta \in (0,1)$ be such that~$q'=\theta+\left(1-\theta\right) \mathfrak{q}/2$. Furthermore observe that, by~\eqref{eq:assunzione-pesi}, $\rho \in L^\infty(\Omega)$ fulfills~\eqref{eq:condizione-peso}. Then, H\"older inequality and the weighted Sobolev inequality~\eqref{eq:sobolev-pesata} entail
		\begin{align*}
			\int_{\Omega} &\left(\eta^2 v^{\beta+1}\right)^{q'} dx \leq \left(\int_{\Omega} \eta^2 v^{\beta+1} \, dx \right)^{\!\!\theta} \left(\int_{\Omega} \left(\eta v^\frac{{\beta+1}}{2}\right)^{\!\mathfrak{q}} dx \right)^{\!\!1-\theta} \\
			&\leq \left(\int_{\Omega} \eta^2 v^{\beta+1} \, dx \right)^{\!\!\theta} \left(C_\mathcal{S}^2 \int_{\Omega} \rho\,\abs*{\nabla\eta}^2 v^{\beta+1} + \left(\frac{\abs*{\beta+1}}{2}\right)^{\!2} \rho\eta^2 v^{\beta-1} \,\abs{\nabla v}^2 \, dx \right)^{\!\!(1-\theta)\frac{\mathfrak{q}}{2}} \\
			&\leq C_q \, C_\mathcal{S}^{(1-\theta)\mathfrak{q}} \left(\int_{\Omega} \eta^2 v^{\beta+1} \, dx \right)^{\!\!\theta} \cdot \\
			&\quad \cdot \left\{ \left(\int_{\Omega} \rho\,\abs*{\nabla\eta}^2 v^{\beta+1} dx \right)^{\!\!(1-\theta)\frac{\mathfrak{q}}{2}} + \abs*{\beta+1}^{(1-\theta)\mathfrak{q}} \left(\int_{\Omega} \rho\eta^2 v^{\beta-1} \,\abs{\nabla v}^2 \, dx \right)^{\!\!(1-\theta)\frac{\mathfrak{q}}{2}} \right\} \\
			&\leq C_q \, C_\mathcal{S}^{(1-\theta)\mathfrak{q}} \left(\int_\Omega \left[\eta^2+\rho\,\abs*{\nabla\eta}^2\right] v^{\beta+1} \, dx\right)^{\!\! q'} \\
			&\quad + C_q \, C_\mathcal{S}^{(1-\theta)\mathfrak{q}} \,\abs*{\beta+1}^{(1-\theta)\mathfrak{q}} \left(\int_{\Omega} \eta^2 v^{\beta+1} \, dx \right)^{\!\!\theta} \left( \int_{\Omega} \rho\eta^2 v^{\beta-1} \,\abs{\nabla v}^2 \, dx \right)^{\!\!(1-\theta)\frac{\mathfrak{q}}{2}}\!,
		\end{align*}
		where~$C_q \coloneqq  \max\left\{1, 2^{(1-\theta)\frac{\mathfrak{q}}{2}-1}\right\}$. Further, by exploiting the second estimate in~\eqref{eq:stime-v} and recalling the definition of~$\mathsf{k}$, it follows that
		\begin{equation}
			\label{eq:est-f<1}
			\left(\int_{\mathrm{supp}\,\eta} \,\abs*{\frac{f}{v}}^q \, dx \right)^{\!\!\frac{1}{q}} \leq \frac{1}{\mathsf{k}} \,\norma*{f}_{L^q(\mathrm{supp}\,\eta)} \leq 1.
		\end{equation}
		Note that~\eqref{eq:est-f<1} remains valid even when~$\mathsf{k}=0$, since in this case~$f=0$ a.e.\ in~$B_{5R}(x_0)$. Thus, going back to~\eqref{eq:rhs-wh}, we deduce
		\begin{equation}
			\label{eq:intf-silim}
			\begin{split}
				\int_\Omega &f \phi \, dx \leq C_q^\frac{1}{q'} C_\mathcal{S}^{(1-\theta)\frac{\mathfrak{q}}{q'}} \int_\Omega \left[\eta^2+\rho\,\abs*{\nabla\eta}^2\right] v^{\beta+1} \, dx \\
				&+ C_q^\frac{1}{q'} C_\mathcal{S}^{(1-\theta)\frac{\mathfrak{q}}{q'}} \,\abs*{\beta+1}^{(1-\theta)\frac{\mathfrak{q}}{q'}}  \left(\int_{\Omega} \eta^2 v^{\beta+1} \, dx \right)^{\!\!\frac{\theta}{q'}} \left( \int_{\Omega} \rho\eta^2 v^{\beta-1} \,\abs{\nabla v}^2 \, dx \right)^{\!\!(1-\theta)\frac{\mathfrak{q}}{2q'}}\!.
			\end{split}
		\end{equation}
		Finally,~\eqref{eq:eq-diff-1} yields
		\begin{align}
			\label{eq:eq-diff-2}
			c(p) &\,\abs{\beta} \int_\Omega \rho\eta^2 v^{\beta-1} \,\abs{\nabla v}^2 \, dx \leq 2C(p) \int_\Omega \rho \eta v^\beta \,\abs{\nabla v}\abs{\nabla \eta} \notag \\
			&+ C_q^\frac{1}{q'} C_\mathcal{S}^{(1-\theta)\frac{\mathfrak{q}}{q'}} \int_\Omega \left[\eta^2+\rho\,\abs*{\nabla\eta}^2\right] v^{\beta+1} \, dx + \norma*{c}_{L^\infty(B_{5R}(x_0))} \int_\Omega \eta^2 v^{\beta+1} \, dx \\
			&+ C_q^\frac{1}{q'} C_\mathcal{S}^{(1-\theta)\frac{\mathfrak{q}}{q'}} \,\abs*{\beta+1}^{(1-\theta)\frac{\mathfrak{q}}{q'}}  \left(\int_{\Omega} \eta^2 v^{\beta+1} \, dx \right)^{\!\!\frac{\theta}{q'}} \left( \int_{\Omega} \rho\eta^2 v^{\beta-1} \,\abs{\nabla v}^2 \, dx \right)^{\!\!(1-\theta)\frac{\mathfrak{q}}{2q'}}\!. \notag
		\end{align}
		We now need to exclude the possibility that the last summand on the right-hand side of~\eqref{eq:eq-diff-2} blows up in the iteration when~$\abs{\beta} >1$. To this end, thanks to the definition of~$\theta$ and the first condition in~\eqref{eq:cond-q}, it follows that
		\begin{equation}
			\label{eq:tq'<1}
			(1-\theta)\frac{\mathfrak{q}}{q'} = 2 \left(1-\frac{\theta}{q'}\right) \in (0,1),
		\end{equation}
		which, in turn, implies~$q'<2\theta$ and
		\begin{equation*}
			\frac{\abs*{\beta+1}^{(1-\theta)\frac{\mathfrak{q}}{q'}} }{\abs{\beta}} \leq \max\left\{1,\frac{1}{\abs{\beta}}\right\} \quad \mbox{for every } \beta<0.
		\end{equation*}
		Hence, from~\eqref{eq:eq-diff-2}, we get
		\begin{equation*}
			\begin{split}
				c(p) &\int_\Omega \rho\eta^2 v^{\beta-1} \,\abs{\nabla v}^2 \, dx \leq 2C(p) \max\left\{1,\frac{1}{\abs{\beta}}\right\} \int_\Omega \rho \eta v^\beta \,\abs{\nabla v}\abs{\nabla \eta} \, dx \\
				&+ \norma*{c}_{L^\infty(B_{5R}(x_0))} \max\left\{1,\frac{1}{\abs{\beta}}\right\} \int_\Omega \eta^2 v^{\beta+1} \, dx \\
				&+ C_q^\frac{1}{q'} C_\mathcal{S}^{(1-\theta)\frac{\mathfrak{q}}{q'}} \max\left\{1,\frac{1}{\abs{\beta}}\right\} \int_\Omega \left[\eta^2+\rho\,\abs*{\nabla\eta}^2\right] v^{\beta+1} \, dx \\
				&+ C_q^\frac{1}{q'} C_\mathcal{S}^{(1-\theta)\frac{\mathfrak{q}}{q'}} \max\left\{1,\frac{1}{\abs{\beta}}\right\} \left(\int_{\Omega} \eta^2 v^{\beta+1} \, dx \right)^{\!\!\frac{\theta}{q'}} \left( \int_{\Omega} \rho\eta^2 v^{\beta-1} \,\abs{\nabla v}^2 \, dx \right)^{\!\!(1-\theta)\frac{\mathfrak{q}}{2q'}}\!.
			\end{split}
		\end{equation*}
		Finally, by applying Young's inequality a couple of times, we deduce
		\begin{equation*}
			\begin{split}
				\int_\Omega \rho\eta^2 v^{\beta-1} \,\abs{\nabla v}^2 \, dx &\leq K \max\left\{1,\frac{1}{\abs{\beta}}\right\}^{\!2} \max\left\{1, C_\mathcal{S}^{\frac{1-\theta}{\theta}\mathfrak{q}} \right\} \int_\Omega \left[\eta^2+\rho\,\abs*{\nabla\eta}^2\right] v^{\beta+1} \, dx \\
				&\leq K \max\left\{1,\frac{1}{\abs{\beta}}\right\}^{\!2} \max\left\{1, C_\mathcal{S}^4 \right\} \int_\Omega \left[\eta^2+\rho\,\abs*{\nabla\eta}^2\right] v^{\beta+1} \, dx \\
			\end{split}
		\end{equation*}
		for some~$K \geq 1$ depending only on~$n$,~$p$,~$q$, and~$\norma*{c}_{L^\infty(B_{5R}(x_0))}$. The last estimate follows by noting that, in view of~\eqref{eq:tq'<1} and the definition of~$\theta$, we have
		\begin{equation*}
			\frac{1-\theta}{\theta}\,\mathfrak{q} = 2\,\frac{q'-\theta}{\theta} \leq 4.
		\end{equation*}
		
		Define now~$r \coloneqq \beta+1$ and
		\begin{equation*}
			w \coloneqq
			\begin{cases}
				\begin{aligned}
					& v^\frac{\beta+1}{2}	&& \mbox{if } \beta\neq -1, \\
					& \log v				&& \mbox{if } \beta = -1,
				\end{aligned}
			\end{cases}
		\end{equation*}
		so that we can rewrite~\eqref{eq:eq-diff-2} as
		\begin{equation}
			\label{eq:eq-diff-final}
			\int_\Omega \rho\eta^2 \,\abs{\nabla w}^2 \, dx 
			\leq
			\begin{cases}
				\begin{aligned}
					& K' \max\left\{1,\frac{1}{\abs{\beta}}\right\}^{\! 2} r^2 \int_\Omega \left[\eta^2+\rho\,\abs*{\nabla\eta}^2\right] w^2 \, dx	&& \mbox{if } \beta\neq -1, \\
					& K' \int_\Omega \left[\eta^2+\rho\,\abs*{\nabla\eta}^2\right] dx	&& \mbox{if } \beta = -1,
				\end{aligned}
			\end{cases}
		\end{equation}
		where~$K' \geq 1$ depends on an upper bound for~$C_\mathcal{S}$, as well.
		
		Since~$2<\mathfrak{q}<2_M$, by the weighted Sobolev inequality~\eqref{eq:sobolev-pesata}, we get
		\begin{equation*}
			\norma{\eta w}^2_{L^{\mathfrak{q}}(\Omega)} \leq C_\mathcal{S}^2 \int_\Omega \rho \left[\eta^2\,\abs*{\nabla w}^2+w^2\,\abs*{\nabla\eta}^2\right] dx.
		\end{equation*}
		For~$\beta\neq -1$, this, together with~\eqref{eq:bound-gradiente} and~\eqref{eq:eq-diff-final}, implies that
		\begin{align}
			\label{eq:daiterare}
			\begin{split}
				\norma{\eta w}^2_{L^{\mathfrak{q}}(\Omega)}
				&\leq C \, C_\mathcal{S}^2 \max\left\{1,\frac{1}{\abs{\beta}}\right\}^{\! 2} \left(1+r^2\right) \int_\Omega \left[\eta^2+\abs*{\nabla\eta}^2\right] w^2 \, dx \\
				&\quad\leq C_2 \max\left\{1,\frac{1}{\abs{\beta}}\right\}^{\! 2} \left(1+\abs*{r}\right)^2  \int_\Omega \left[\eta^2+\abs*{\nabla\eta}^2\right] w^2 \, dx,
			\end{split}
		\end{align}
		where~$C_2\geq1$ depends only on~$n$,~$p$,~$q$,~$C_1$,~$\norma*{c}_{L^\infty(B_{5R}(x_0))}$, as well as an upper bound for~$C_\mathcal{S}$. This equation is precisely (A.11) in~\cite{ds-har} or (5.12) in~\cite{trud-lin}.
		
		Recall now that, for~$m\neq 0$ and~$\delta>0$, we write
		\begin{equation*}
			\Phi(m,\delta,v) \coloneqq \left( \int_{B_\delta(x_0)} \,\abs{v}^m \, dx \right)^{\!\! \frac{1}{m}} \!,
		\end{equation*}
		and, in addition,~$\Phi$ enjoys the following well-known properties
		\begin{gather*}
			\lim_{m \to +\infty} \Phi(m,\delta,v) = \Phi(+\infty,\delta,v) = \sup_{B_\delta(x_0)} \abs{v}, \\
			\lim_{m \to -\infty} \Phi(m,\delta,v) = \Phi(-\infty,\delta,v) = \inf_{B_\delta(x_0)} \abs{v}.
		\end{gather*}
		Assume that~$R \leq h' < h'' \leq 5R$ and~$R \leq 1$. From~\eqref{eq:daiterare}, arguing as in the deduction of (A.14)-(A.15) in~\cite{ds-har}, we get
		\begin{gather}
			\label{eq:dait->0}
			\Phi(\chi r,h',v) \leq \left(\frac{C_3 \left(1+r\right)}{\abs*{\beta}\left(h''-h'\right)}\right)^{\!\! \frac{2}{r}} \Phi(r,h'',v) \quad \mbox{if } 0<r<1, \\
			\label{eq:dait-<0}
			\Phi(\chi r,h',v) \geq \left(\frac{C_3 \left(1+\abs*{r}\right)}{h''-h'}\right)^{\!\! \frac{2}{r}} \Phi(r,h'',v) \quad \mbox{if } r<0,
		\end{gather}
		for some~$C_3\geq 1$ depending on an upper bound for~$C_\mathcal{S}$. These two inequalities are the elementary building blocks for Moser's iteration.
		
		Provided that we have (A.18) in~\cite{ds-har} or (5.17) in~\cite{trud-lin}, that is
		\begin{equation}
			\label{eq:A18}
			\Phi\left(r_0,\frac{5}{2}R,v\right) \leq C_\sharp^\frac{1}{r_0} \,\Phi\left(-r_0,\frac{5}{2}R,v\right)
		\end{equation}
		for some~$r_0>0$ and~$C_\sharp \geq 1$, we can apply Moser's iteration starting form~\eqref{eq:dait-<0} and reach the conclusion for~$0<s\leq r_0$.
		
		More precisely, we take
		\begin{equation*}
			r_k = -r_0 \chi^k \quad \mbox{ and } \quad h_k = R\left(1+\frac{3}{2^{1+k}}\right).
		\end{equation*}
		Iterating and adopting the notation in~\eqref{eq:sigmas}, we get
		\begin{equation*}
			\begin{split}
				\Phi(r_{k+1},h_{k+1},v) &\geq \left(\frac{C_3}{3}\right)^{\!\! -\frac{2}{r_0 \chi^k}} 2^{-\frac{2(k+1)}{r_0 \chi^k}} \chi^{-\frac{2k}{r_0 \chi^k}} \left(1+r_0\right)^{-\frac{2}{r_0 \chi^k}} R^{\frac{2}{r_0 \chi^k}} \,\Phi(r_k,h_k,v) \\
				&\geq \left(\frac{C_3}{3}\right)^{\!\! -\frac{2}{r_0}\Sigma_k} 2^{-\frac{2}{r_0}\left(\Sigma_k+\Sigma'_k\right)} \chi^{-\frac{2}{r_0}\Sigma'_k} \left(1+r_0\right)^{-\frac{2}{r_0}\Sigma_k} R^{\frac{2}{r_0}\Sigma_k} \,\Phi\left(-r_0,\frac{5}{2}R,v\right) \!.
			\end{split}
		\end{equation*}
		Since~$r_k \to -\infty$,~$h_k \to R$,~$\Sigma_k \to \Sigma$, and~$\Sigma_k' \to \Sigma'$ as~$k\to +\infty$, taking the limit in the previous relation, we conclude that
		\begin{equation}
			\label{eq:A17}
			\Phi(-\infty,R,v) \geq \mathcal{A}(r_0) \,\Phi\left(-r_0,\frac{5}{2}R,v\right) \!,
		\end{equation}
		where, since we will take~$r_0 \leq 1$, we have
		\begin{equation*}
			\mathcal{A}(r_0) \coloneqq C_4^{-\frac{2}{r_0}} R^{\frac{2}{r_0}\Sigma}
		\end{equation*}
		and~$C_4 \geq 1$, possibly by choosing a larger~$C_2 \geq 1$ depending on~$\chi$. In addition, since~\eqref{eq:A18} implies that
		\begin{equation*}
			\begin{split}
				\Phi(s,2R,v) &\leq \abs{B_1}^{\frac{1}{s}-\frac{1}{r_0}} \left(\frac{5}{2}R\right)^{\!\! n\left(\frac{1}{s}-\frac{1}{r_0}\right)} \Phi\left(r_0,\frac{5}{2}R,v\right) \\
				&\leq c_{n,s} \, R^{n\left(\frac{1}{s}-\frac{1}{r_0}\right)} C_\sharp^\frac{1}{r_0} \Phi\left(-r_0,\frac{5}{2}R,v\right)
			\end{split}
		\end{equation*}
		for~$0<s\leq r_0 \leq 1$ and some~$c_{n,s}>0$, we deduce from~\eqref{eq:A17} that
		\begin{equation}
			\label{eq:finale-1}
			\Phi(s,2R,v) \leq c_{n,s} \, R^{n\left(\frac{1}{s}-\frac{1}{r_0}\right)} C_\sharp^\frac{1}{r_0} \mathcal{A}(r_0)^{-1} \Phi(-\infty,R,v).
		\end{equation}
		This allows to conclude for $0<s\leq r_0$. \newline
		
		We now need to establish~\eqref{eq:A18}. Consider~$\beta = -1$ and observe that, by taking~$\eta=1$ in~$B_{4R}(x_0)$ with~$\abs{\nabla\eta} \leq 1/R$ in~\eqref{eq:eq-diff-final}, we have
		\begin{equation}
			\label{eq:stima-bolla}
			\int_{B_{4R}(x_0)} \rho \,\abs{\nabla w}^2 \, dx \leq 2 \, 5^{n} \abs{B_1} K' \max\left\{1,(2C_1)^{p-2}\right\} R^{n-2}.
		\end{equation}
		Up to possibly replacing~$v$ with~$v/h$, where
		\begin{equation*}
			h \coloneqq e^{\dashint_{B_{4R}(x_0)}\log v \, dx},
		\end{equation*}
		we can assume that~$w$ has zero mean on~$B_{4R}(x_0)$. The constant~$h$ does not affect the following calculations and can be canceled at the end of the proof. For zero-mean functions, we can apply the weighted Sobolev inequality~\eqref{eq:sobolev-pesata} -- note that for a ball the constants~$\widehat{C}$ in the two cases of Theorem~\ref{th:w-sobolev} differ only for a dimensional factor. Thus, we obtain
		\begin{equation}
			\label{eq:A24}
			\norma{w}_{L^\mathfrak{q}(B_{4R}(x_0))}^2 \leq C \, C_\mathcal{S}^2 \int_{B_{4R}(x_0)} \rho \,\abs{\nabla w}^2 \, dx \leq C_5^2 R^{n-2} \leq C_5^2,
		\end{equation}
		by~\eqref{eq:stima-bolla} and recalling that~$R\leq 1$, where~$C_5>0$ depends on an upper bound for~$C_\mathcal{S}$. Moreover, observe that the constant in~\eqref{eq:stima-bolla}, and hence~$C_5$, do not blow up as~$\tau \to 0$.
		
		Take~$\eta \in C^1_c(B_{4R}(x_0))$, with~$\eta\geq 0$ in~$\Omega$, and define
		\begin{equation*}
			\Phi \coloneqq \frac{\eta^2}{v} \left(\abs{w}^\beta+(2\beta)^\beta\right) \geq 0
		\end{equation*}
		for~$\beta\geq 1$, so that
		\begin{equation*}
			\nabla\Phi=2\;\frac{\eta}{v} \left(\abs{w}^\beta+(2\beta)^\beta\right) \,\nabla\eta + \frac{\eta^2}{v^2} \left(\beta\sign(w)\abs{w}^{\beta-1}-\abs{w}^{\beta}-(2\beta)^\beta \right) \,\nabla v.
		\end{equation*}
		By using~$\Phi$ as test function in~\eqref{eq:eqforu}, we get
		\begin{equation*}
			\int_\Omega -\big\langle \abs*{\nabla u_2}^{p-2}\,\nabla u_2-\abs*{\nabla u_1}^{p-2}\,\nabla u_1, \nabla\Phi \big\rangle \, dx \leq \int_\Omega c \left(u_2-u_1\right) \Phi + f\Phi \, dx,
		\end{equation*}
		that is
		\begin{align}
			\label{eq:eq-diff-1'}
			\int_\Omega &\,\frac{\eta^2}{v^2} \left(-\beta\sign(w)\abs{w}^{\beta-1}+\abs{w}^{\beta}+(2\beta)^\beta \right) \left\langle \abs*{\nabla u_2}^{p-2}\,\nabla u_2 - \abs*{\nabla u_1}^{p-2}\,\nabla u_1, \nabla v\right\rangle \notag \\
			&- 2\,\frac{\eta}{v} \left(\abs{w}^\beta+(2\beta)^\beta\right) \left\langle \abs*{\nabla u_2}^{p-2}\,\nabla u_2 - \abs*{\nabla u_1}^{p-2}\,\nabla u_1, \nabla \eta\right\rangle dx \\
			&\leq \int_\Omega c \left(u_2-u_1\right) \Phi + f \Phi \, dx. \notag
		\end{align}
		Note that, for~$\beta \geq 1$, the following elementary inequalities hold
		\begin{gather}
			\label{eq:est-beta1}
			2\beta\,\abs{w}^{\beta-1} \leq \frac{\beta-1}{\beta} \,\abs{w}^{\beta} + \frac{1}{\beta} \,(2\beta)^\beta \leq \abs{w}^{\beta} + (2\beta)^\beta,\\
			\label{eq:est-beta2}
			-\beta\sign(w)\abs{w}^{\beta-1}+\abs{w}^{\beta}+(2\beta)^\beta \geq \beta\,\abs{w}^{\beta-1}.
		\end{gather}
		Moreover, by~\eqref{eq:est-beta1}, a simple computation reveals that
		\begin{equation}
			\label{eq:est-beta3}
			\abs{w}^\beta \leq e \left(\abs{w}^{\beta+1}+(2\beta)^\beta\right) \!.
		\end{equation}
		Exploiting~\eqref{eq:disp-basso}-\eqref{eq:disp-alto}, we can estimate each term on the left-hand side of~\eqref{eq:eq-diff-1'}. Indeed, by~\eqref{eq:est-beta2} and the monotonicity of the~$p$-Laplace operator, we have
		\begin{equation*}
			\begin{split}
				\frac{\eta^2}{v^2}& \left(-\beta\sign(w)\abs{w}^{\beta-1}+\abs{w}^{\beta}+(2\beta)^\beta \right) \left\langle \abs*{\nabla u_2}^{p-2}\,\nabla u_2 - \abs*{\nabla u_1}^{p-2}\,\nabla u_1, \nabla v\right\rangle \\
				&\geq c(p)\beta\rho\eta^2 \,\abs{w}^{\beta-1}\,\abs{\nabla w}^2,
			\end{split}
		\end{equation*}
		for the second summand, taking into account~\eqref{eq:est-beta3} as well, we get
		\begin{equation*}
			\begin{split}
				&\abs*{2\,\frac{\eta}{v} \left(\abs{w}^\beta+(2\beta)^\beta\right) \left\langle \abs*{\nabla u_2}^{p-2}\,\nabla u_2 - \abs*{\nabla u_1}^{p-2}\,\nabla u_1, \nabla \eta\right\rangle} \\
				&\quad \leq 2\left(e+1\right)C(p) \,\frac{\eta}{v} \left(\abs{w}^{\beta+1}+(2\beta)^\beta\right) \rho \,\abs{\nabla v}\abs{\nabla \eta}.
			\end{split}
		\end{equation*}
		For the first term on the right-hand side of~\eqref{eq:eq-diff-1'}, recalling~\eqref{eq:stime-v} and~\eqref{eq:est-beta3}, we easily see that
		\begin{equation*}
			\begin{split}
				\abs*{c \left(u_2-u_1\right) \Phi} &\leq \norma*{c}_{L^\infty(B_{5R}(x_0))} \eta^2 \left(\abs{w}^\beta+(2\beta)^\beta\right) \\
				&\leq \left(e+1\right) \norma*{c}_{L^\infty(B_{5R}(x_0))} \eta^2 \left(\abs{w}^{\beta+1}+(2\beta)^\beta\right).
			\end{split}
		\end{equation*}
		For the last term in~\eqref{eq:eq-diff-1'}, we argue as for~\eqref{eq:rhs-wh} to deduce
		\begin{equation*}
			\begin{split}
				\int_\Omega f \Phi \, dx &= \int_\Omega \,\frac{f}{v} \,\eta^2 \left(\abs{w}^\beta+(2\beta)^\beta\right) dx \\
				&\leq \left(\int_{\mathrm{supp}\,\eta} \,\abs*{\frac{f}{v}}^q \, dx \right)^{\!\!\frac{1}{q}} \left(\int_{\Omega} \left[\eta^2 \left(\abs{w}^\beta+(2\beta)^\beta\right)\right]^{q'} \! dx \right)^{\!\!\frac{1}{q'}} \\
				&\leq (e+1) \left(\int_{\Omega} \left[\eta^2 \left(\abs{w}^{\beta+1}+(2\beta)^\beta\right)\right]^{q'} \! dx \right)^{\!\!\frac{1}{q'}}\!,
			\end{split}
		\end{equation*}
		where we used~\eqref{eq:est-f<1} and~\eqref{eq:est-beta3}. Then, H\"older inequality and the weighted Sobolev inequality~\eqref{eq:sobolev-pesata} imply
		\begin{align*}
			\int_{\Omega} &\left[\eta^2 \left(\abs{w}^{\beta+1}+(2\beta)^\beta\right)\right]^{q'} dx \leq \left(\int_{\Omega} \eta^2 \ell \, dx \right)^{\!\!\theta} \left(\int_{\Omega} \left[\eta \left(\abs{w}^{\frac{\beta+1}{2}}+(2\beta)^\frac{\beta}{2}\right)\right]^{\mathfrak{q}} dx \right)^{\!\!1-\theta} \\
			&\leq \left(\int_{\Omega} \eta^2 \ell \, dx \right)^{\!\!\theta} \left(C_\mathcal{S}^2 \int_{\Omega} 2\rho\,\abs*{\nabla\eta}^2 \ell + \left(\frac{\beta+1}{2}\right)^{\!\!2} \rho\eta^2 \,\abs*{w}^{\beta-1} \abs{\nabla w}^2 \, dx \right)^{\!\!(1-\theta)\frac{\mathfrak{q}}{2}} \\
			&\leq \widehat{C}_q \, C_\mathcal{S}^{(1-\theta)\mathfrak{q}} \left(\int_{\Omega} \eta^2 \ell \, dx \right)^{\!\!\theta} \Bigg\{ \left(\int_{\Omega} \rho\,\abs*{\nabla\eta}^2 \ell \, dx \right)^{\!\!(1-\theta)\frac{\mathfrak{q}}{2}} \\
			&\qquad+ (\beta+1)^{(1-\theta)\mathfrak{q}} \left(\int_{\Omega} \rho\eta^2 \,\abs{w}^{\beta-1} \abs{\nabla w}^2 \, dx \right)^{\!\!(1-\theta)\frac{\mathfrak{q}}{2}} \Bigg\} \\
			&\leq \widehat{C}_q \, C_\mathcal{S}^{(1-\theta)\mathfrak{q}} \left(\int_\Omega \left[\eta^2+\rho\,\abs*{\nabla\eta}^2\right] \ell \, dx\right)^{\!\! q'} \\
			&\quad + \widehat{C}_q \, C_\mathcal{S}^{(1-\theta)\mathfrak{q}} \left(\beta+1\right)^{(1-\theta)\mathfrak{q}} \left(\int_{\Omega} \eta^2 \ell \, dx \right)^{\!\!\theta} \left( \int_{\Omega} \rho\eta^2 \,\abs{w}^{\beta-1} \abs{\nabla w}^2 \, dx \right)^{\!\!(1-\theta)\frac{\mathfrak{q}}{2}}\!,
		\end{align*}
		where~$\widehat{C}_q \coloneqq  \max\left\{2, 2^{(1-\theta)\frac{\mathfrak{q}}{2}-1}\right\}$ and we set~$\ell=\ell(w) \coloneqq \abs{w}^{\beta+1}+(2\beta)^\beta$ to shorten the notation. Therefore, we obtain
		\begin{equation*}
			\begin{split}
				\int_\Omega f \Phi \, dx &\leq \widetilde{C}_q^\frac{1}{q'} \, C_\mathcal{S}^{(1-\theta)\frac{\mathfrak{q}}{q'}} \int_\Omega \left[\eta^2+\rho\,\abs*{\nabla\eta}^2\right] \ell \, dx \\
				&\quad + \widetilde{C}_q^\frac{1}{q'} \, C_\mathcal{S}^{(1-\theta)\frac{\mathfrak{q}}{q'}} \left(\beta+1\right)^{(1-\theta)\frac{\mathfrak{q}}{q'}} \left(\int_{\Omega} \eta^2 \ell \, dx \right)^{\!\!\frac{\theta}{q'}} \left( \int_{\Omega} \rho\eta^2 \,\abs{w}^{\beta-1} \abs{\nabla w}^2 \, dx \right)^{\!\!(1-\theta)\frac{\mathfrak{q}}{2q'}}\!,
			\end{split}
		\end{equation*}
		where~$\widetilde{C}_q= \left(e+1\right)\widehat{C}_q$. Now, since~$\beta \geq 1$,~\eqref{eq:eq-diff-1'} yields
		\begin{align*}
			c(p) &\int_\Omega \rho\eta^2 \abs{w}^{\beta-1} \,\abs{\nabla w}^2 \, dx \leq 2\left(e+1\right)C(p)\int_\Omega \rho \eta \ell \,\abs{\nabla w}\abs{\nabla \eta} \\
			&\quad+  \left(e+1\right) \norma*{c}_{L^\infty(B_{5R}(x_0))} \int_\Omega  \eta^2 \ell \, dx + \widetilde{C}_q^\frac{1}{q'} C_\mathcal{S}^{(1-\theta)\frac{\mathfrak{q}}{q'}} \int_\Omega \left[\eta^2+\rho\,\abs*{\nabla\eta}^2\right] \ell \, dx \\
			&\quad+ \widetilde{C}_q^\frac{1}{q'} C_\mathcal{S}^{(1-\theta)\frac{\mathfrak{q}}{q'}} \frac{\left(\beta+1\right)^{(1-\theta)\frac{\mathfrak{q}}{q'}}}{\beta} \left(\int_{\Omega} \eta^2 \ell \, dx \right)^{\!\!\frac{\theta}{q'}} \left( \int_{\Omega} \rho\eta^2 \,\abs{w}^{\beta-1} \abs{\nabla w}^2 \, dx \right)^{\!\!(1-\theta)\frac{\mathfrak{q}}{2q'}} \\
			&\leq 2\left(e+1\right)C(p)\int_\Omega \rho \eta \ell \,\abs{\nabla w}\abs{\nabla \eta} \\
			&\quad+ \left(e+1\right) \norma*{c}_{L^\infty(B_{5R}(x_0))} \int_\Omega  \eta^2 \ell \, dx + \widetilde{C}_q^\frac{1}{q'} C_\mathcal{S}^{(1-\theta)\frac{\mathfrak{q}}{q'}} \int_\Omega \left[\eta^2+\rho\,\abs*{\nabla\eta}^2\right] \ell \, dx \\
			&\quad+ 2 \widetilde{C}_q^\frac{1}{q'} C_\mathcal{S}^{(1-\theta)\frac{\mathfrak{q}}{q'}} \left(\int_{\Omega} \eta^2 \ell \, dx \right)^{\!\!\frac{\theta}{q'}} \left( \int_{\Omega} \rho\eta^2 \,\abs{w}^{\beta-1} \abs{\nabla w}^2 \, dx \right)^{\!\!(1-\theta)\frac{\mathfrak{q}}{2q'}}\!,
		\end{align*}
		by virtue of~\eqref{eq:tq'<1}. Finally, by Young's inequality and~\eqref{eq:eq-diff-final}, we conclude that
		\begin{equation}
			\label{eq:daiterare'}
			\int_\Omega \rho\eta^2 \,\abs{w}^{\beta-1}\,\abs{\nabla w}^2 \, dx \leq C \int_\Omega \left[\eta^2+\abs*{\nabla\eta}^2\right] \left(\abs{w}^{\beta+1}+(2\beta)^\beta\right) dx,
		\end{equation}
		where~$C \geq 1$ depends only on~$n$,~$p$,~$q$,~$C_1$,~$\norma*{c}_{L^\infty(B_{5R}(x_0))}$, as well as an upper bound for~$C_\mathcal{S}$. Particular care should be dedicated to the estimate of
		\begin{equation*}
			\begin{split}
				(2\beta)^\beta \int_\Omega \rho\eta^2 \,\abs{\nabla w}^2 \, dx &\leq (2\beta)^\beta K' \max\left\{1,(2C_1)^{p-2}\right\} \int_\Omega \left[\eta^2+\abs*{\nabla\eta}^2\right] dx \\
				&\leq C \int_\Omega \left[\eta^2+\abs*{\nabla\eta}^2\right] \ell \, dx,
			\end{split}
		\end{equation*}
		where we used~\eqref{eq:eq-diff-final} and the constant~$C \geq 1$ does not depend on~$R$. Now, assuming that~$R \leq h' < h'' \leq 4R$ and arguing as above, from~\eqref{eq:daiterare'}, we deduce that
		\begin{equation}
			\label{eq:A34}
			\Phi(\chi r,h',w) \leq \left(\frac{C_6 \, r}{h''-h'}\right)^{\!\! \frac{2}{r}} \left[\Phi(\chi r,h'',w)+2r\right]
		\end{equation}
		for some~$C_6 \geq 1$ depending on un upper bound for~$C_\mathcal{S}$. This is precisely (A.34) in~\cite{ds-har} or (5.22) in~\cite{trud-lin}.
		
		We now shall iterate~\eqref{eq:A34} in order to prove that
		\begin{equation}
			\label{eq:A35}
			\Phi(\mathfrak{p},3R,w) \leq C_\flat R^{\gamma(\mathfrak{p})} \left(\Phi(q,4R,w) + \mathfrak{p}\right) \quad \mbox{for every } \mathfrak{p} \geq \mathfrak{q},
		\end{equation}
		for some~$C_\flat \geq 1$, independent of~$\mathfrak{p}$, and some explicit function~$\gamma(\mathfrak{p})$ of~$\mathfrak{p}$. More precisely, we take
		\begin{equation*}
			r_k = \mathfrak{q}\chi^{k} \quad \mbox{ and } \quad h_k = 3R\left(1+\frac{1}{3 \, 2^{k}}\right).
		\end{equation*}
		Since~$\mathfrak{q}>2$, in view of this definition~$\beta_k = r_k -1 >1$. Thus, we have
		\begin{equation*}
			\begin{split}
				\Phi(\mathfrak{q}\chi^{k},h_{k},w) &\leq \left(\frac{\mathfrak{q} C_6 \, 2^k \chi^{k-1} }{2R}\right)^{\!\! \frac{2}{\mathfrak{q}\chi^{k}}} \left[\Phi(q\chi^{k-1},h_{k-1},w)+2\mathfrak{q}\chi^{k-1}\right] \\
				&\leq \prod_{j=0}^{k-1} \left(\frac{\mathfrak{q} C_6 \, 2^j \chi^{j-1} }{2R}\right)^{\!\! \frac{2}{\mathfrak{q}\chi^{j}}} \Phi(\mathfrak{q},4R,w) +2\mathfrak{q}\chi^k \sum_{j=0}^{k-1} \chi^{j-k} \prod_{l=j}^{k-1} \left(\frac{\mathfrak{q} C_6 \, 2^l \chi^{l-1} }{2R}\right)^{\!\! \frac{2}{\mathfrak{q}\chi^{l}}} \!.
			\end{split}
		\end{equation*}
		For convenience of notation, set
		\begin{equation*}
			\Sigma\left(k,j\right) \coloneqq \frac{2}{\mathfrak{q}} \sum_{l=j}^{k-1} \frac{1}{\chi^l} = \frac{2}{\mathfrak{q}} \frac{(1/\chi)^j-(1/\chi)^k}{1-1/\chi}
		\end{equation*}
		and observe that~$\Sigma\left(k,0\right) \geq \Sigma\left(k,j\right)$. Therefore, we have
		\begin{equation*}
			\begin{split}
				\Phi(\mathfrak{q}\chi^{k},h_{k},w) &\leq \frac{C}{R^{\Sigma\left(k,0\right)}} \,\Phi(\mathfrak{q},4R,w) +2q\chi^k C \sum_{j=0}^{k-1} \frac{\chi^{j-k}}{R^{\Sigma\left(k,j\right)}} \\
				&\leq \frac{C}{R^{\Sigma\left(k,0\right)}} \left[\Phi(\mathfrak{q},4R,w) +2\mathfrak{q}\chi^k \sum_{j=0}^{k-1} \chi^{j-k} \right] \leq \frac{C}{R^{\Sigma\left(k,0\right)}} \left[\Phi(\mathfrak{q},4R,w) +2\mathfrak{q}\chi^k \right]\!,
			\end{split}
		\end{equation*}
		where~$C \geq 1$ does not depend on~$k$, but only on~$n$,~$p$,~$q$,~$C_1$, and an upper bound for~$C_\mathcal{S}$. By setting~$k_\mathfrak{p} \coloneqq \min\left\{h \in \N \,\lvert\, \mathfrak{q}\chi^h \geq \mathfrak{p}\right\}$, we get
		\begin{equation*}
			\Phi(\mathfrak{p},3R,w) \leq \abs{B_1}^n \left(3R\right)^{\frac{n}{\mathfrak{p}}-\frac{n}{\mathfrak{q}\chi^{k_\mathfrak{p}}}} \,\Phi(\mathfrak{q}\chi^{k_\mathfrak{p}},h_{k_\mathfrak{p}},w) \leq \frac{C_\flat}{R^{\Sigma\left(k_\mathfrak{p},0\right)}} R^{\frac{n}{\mathfrak{p}}-\frac{n}{\mathfrak{q}\chi^{k_\mathfrak{p}}}} \left[\Phi(\mathfrak{q},4R,w) + \mathfrak{p} \right]\!.
		\end{equation*}
		This proves the validity of~\eqref{eq:A35} with~$\gamma(\mathfrak{p}) \coloneqq \frac{n}{\mathfrak{p}}-\frac{n}{q\chi^{k_\mathfrak{p}}} - \Sigma\left(k_\mathfrak{p},0\right)$.
		
		We now use~\eqref{eq:A35} to prove that~\eqref{eq:A18} holds true. In particular, we look for~$r_0 = \mathcal{C} R^\mu$, with~$\mu>0$ and~$\mathcal{C}>0$ to be shortly determined. Considering the power series expansion of~$e^{r_0\abs{w}}$, we get
		\begin{equation*}
			\int_{B_{5/2R\left(x_{0}\right)}} e^{r_0\abs{w}} \, dx \leq \sum_{\mathfrak{p}=0}^{+\infty} \int_{B_{3R(x_0)}} \frac{\left(r_0\abs{w}\right)^\mathfrak{p}}{\mathfrak{p}!} \, dx \leq \sum_{\mathfrak{p}=0}^{+\infty} \frac{\left(r_0 \Phi(\mathfrak{p},3R,w) \right)^\mathfrak{p}}{\mathfrak{p}!}.
		\end{equation*}
		Let~$a_\mathfrak{p}$ denote the general term of the preceding series. Then, taking advantage of~\eqref{eq:A24} and~\eqref{eq:A35}, we obtain
		\begin{equation}
			\label{eq:tgen}
			a_\mathfrak{p} \leq C_\flat^\mathfrak{p} r_0^\mathfrak{p} R^{\gamma(\mathfrak{p})\mathfrak{p}} \,\frac{\left(C_5+\mathfrak{p}\right)^\mathfrak{p}}{\mathfrak{p}!} \eqqcolon b_\mathfrak{p} \quad \mbox{for every } \mathfrak{p} \geq \mathfrak{q}.
		\end{equation}
		We shall apply the root test to show that, for a sufficiently small~$r_0$,~$b_\mathfrak{p}$ is the general term of a convergent series. Clearly~$k_\mathfrak{p} \to +\infty$ as~$\mathfrak{p} \to +\infty$, thus~$\Sigma\left(k_\mathfrak{p},0\right) \to \frac{2}{\mathfrak{q}-2}$ and consequently~$\gamma(\mathfrak{p}) \to -\frac{2}{\mathfrak{q}-2}$ as~$\mathfrak{p} \to +\infty$. Therefore
		\begin{equation*}
			\limsup_{\mathfrak{p} \to +\infty} b_\mathfrak{p}^{1/\mathfrak{p}} \leq \limsup_{\mathfrak{p} \to +\infty}  C_\flat r_0 R^{\gamma(\mathfrak{p})} \,\frac{C_5+\mathfrak{p}}{\left(\mathfrak{p}!\right)^{1/\mathfrak{p}}} = e C_\flat r_0 R^{-\frac{2}{\mathfrak{q}-2}},
		\end{equation*}
		where we used Stirling's formula. We can simply take
		\begin{equation*}
			\mu=\frac{2}{\mathfrak{q}-2} \quad \mbox{ and } \quad \mathcal{C} = \frac{1}{4C_\flat\max\left\{e,C_5\right\}}
		\end{equation*}
		to ensure the convergence of the series. By substituting this definition into~\eqref{eq:tgen}, it is straightforward to see that~$b_\mathfrak{p}$ can be further bounded above by the term of a convergent series, which is independent of~$R \leq 1$. Moreover, by~\eqref{eq:A24}, observe that
		\begin{align}
		\label{eq:est<q}
			r_0 \Phi(\mathfrak{p},3R,w) &\leq \mathcal{C} \Phi(\mathfrak{p},4R,w) \\
		\notag
			&\leq \mathcal{C} \min\!\left\{1,\abs{B_1}^n\right\} \left(4R\right)^{\frac{n}{\mathfrak{p}}-\frac{n}{\mathfrak{q}}} \Phi(\mathfrak{q},4R,w) \leq C C_5 \quad\mbox{for all } 1 \leq \mathfrak{p} \leq \mathfrak{q}.
		\end{align}
		Hence, there exists a~$C \geq 1$ independent of~$R$ such that
		\begin{equation*}
			\int_{B_{5/2R(x_0)}} e^{r_0\abs{w}} \, dx \leq C
		\end{equation*}
		and consequently,
		\begin{equation*}
			\int_{B_{5/2R(x_0)}} e^{r_0 w} \, dx \int_{B_{5/2R(x_0)}} e^{-r_0 w} \, dx \leq \left(\int_{B_{5/2R(x_0)}} e^{r_0\abs{w}} \, dx\right)^{\!\! 2} \leq C_\sharp,
		\end{equation*}
		which, in turn, implies~\eqref{eq:A18}.
		
		Since we precisely know~$r_0$, we can clarify the dependence on~$R$ in~\eqref{eq:finale-1}. Observe that
		\begin{equation*}
			\frac{1}{R^\frac{n}{s}} C_\sharp^{-\frac{1}{r_0}} R^{\frac{n}{r_0}} \mathcal{A}(r_0) \geq \frac{1}{R^\frac{n}{s}} \left(C_\sharp^{1/2} C_4\right)^{\!-\frac{2}{r_0}} R^{\frac{n+2\Sigma}{r_0}} \geq \frac{1}{ R^\frac{n}{s}} \left(c_\flat R\right)^{C_\natural/R^\mu} \!,
		\end{equation*}
		where~$c_\flat \leq 1$, up to possibly enlarging~$C_\sharp \geq 1$, and~$C_\natural \geq 1$.
		
		Therefore, recalling~\eqref{eq:stime-v}, we infer that there exists a constant~$C \geq 1$, depending on~$n$,~$p$,~$s$,~$q$,~$C_1$, and on an upper bound for~$C_\mathcal{S}$, such that
		\begin{equation}
			\label{eq:disfinale}
			\begin{split}
				\mathcal{M}(R) \norma{u_2-u_1}_{L^s(B_{2R}(x_0))} &\leq \mathcal{M}(R) \norma{v}_{L^s(B_{2R}(x_0))} \\
				&\leq C \inf_{B_{R}(x_0)} \! v = C \left(\inf_{B_{R}(x_0)} (u_2-u_1) +2\widehat{\mathsf{k}}\right) \!,
			\end{split}
		\end{equation}
		where~$R \leq 1$ and~$\mathcal{M}(R)$ is the function of the radius given by
		\begin{equation*}
			\mathcal{M}(R) \coloneqq  R^{-n/s} \left(c_\flat R\right)^{C_\natural/R^\frac{2}{\mathfrak{q}-2}},
		\end{equation*} 
		for some small universal~$c_\flat \in (0,1)$ and some large universal~$C_\natural \geq 1$. \newline

		Finally, we are left with the case~$r_0 < s < \chi$. Let~$N>1$ be the smallest integer such that~$r_1 = s \chi^{1-N} \leq r_0$ and consider
		\begin{equation*}
			r_{N-k} = s\chi^{-k} \quad \mbox{ and } \quad h_k = \frac{5}{4}R\left[1+\left(\frac{3}{5}\right)^{\!\! \frac{k}{N}}\right] \!.
		\end{equation*}
		Note that, correspondingly, we have~$\abs*{\beta_{N-k}} \geq \abs*{\beta_{N-1}} = 1-s/\chi$ for~$k\geq 1$. By virtue of the condition~$s<\chi$, we can iterate~\eqref{eq:dait->0} to get
		\begin{align*}
			\Phi(s,2R,v) &= \Phi(r_N,h_N,v) \leq \left(\frac{C_3 \left(1+r_{N-1}\right)}{\abs*{\beta_{N-1}}\left(h_{N-1}-h_{N}\right)}\right)^{\!\! \frac{2}{r_{N-1}}} \Phi(r_{N-1},h_{N-1},v) \\
			&\leq \prod_{k=1}^{N-1} \left[\frac{12 C_3}{5\abs*{\beta_{N-1}}R}\left(\frac{5}{3}\right)^{\!\! \frac{N+1-k}{N}}\right]^{\!\! \frac{2}{r_{N-k}}} \Phi(r_1,h_1,v) \\
			&\leq C^{\sum_{k=1}^{N-1}\frac{1}{r_{N-k}}} R^{-2\sum_{k=1}^{N-1}\frac{1}{r_{N-k}}} \left(\frac{5}{3}\right)^{\! 2 \sum_{k=1}^{N-1}\frac{N+1-k}{N}\frac{1}{r_{N-k}}} \Phi\left(r_1,\frac{5}{2}R,v\right) \!,
		\end{align*}
		for some~$C \geq 1$ independent of~$R$. A simple computation reveals that
		\begin{equation*}
			\sum_{k=1}^{N-1}\frac{1}{r_{N-k}} = \frac{1}{s} \frac{\chi^N-\chi}{\chi-1} \quad \mbox{ and } \quad  \sum_{k=1}^{N-1}\frac{N+1-k}{N}\frac{1}{r_{N-k}} \leq C \chi^N,
		\end{equation*}
		which imply that
		\begin{equation*}
			\Phi(s,2R,v) \leq C^{\chi^N} R^{-\frac{2\chi^N}{s(\chi-1)}} \,\Phi\left(r_1,\frac{5}{2}R,v\right) \!.
		\end{equation*}
		Since~$r_1 \leq r_0$ we can infer that~\eqref{eq:A18} and~\eqref{eq:A17} hold true with~$r_1$ in place of~$r_0$, so that
		\begin{equation*}
			\Phi(s,2R,v) \leq C_7^{\chi^N} R^{-\frac{2\chi^N}{s(\chi-1)}} \mathcal{A}(r_1)^{-1} \Phi(-\infty,R,v),
		\end{equation*}
		for some~$C_7 \geq 1$.
		
		Now, we make more explicit the dependence on~$R$. Recall that~$r_1 \leq r_0$ and observe that, by definition of~$N$, we have~$s\chi/ r_0 \leq \chi^N \leq s\chi^2/r_0$. Hence, it follows that~$r_0 \leq \chi r_1$ and
		\begin{equation*}
			\frac{1}{R^\frac{n}{s}} C_7^{-\chi^N} R^{\frac{2\chi^N}{s(\chi-1)}+\frac{n}{s}} \mathcal{A}(r_1) \geq \frac{1}{R^\frac{n}{s}} \left(C_7^{s\chi^2} C_4^{2\chi}\right)^{\! -\frac{1}{r_0}} R^{\left(\frac{2\chi^2}{\chi-1}+\Sigma\right) \frac{1}{r_0}} \geq \frac{1}{R^\frac{n}{s}} \left(c_\flat R\right)^{C_\natural/R^\mu}\!,
		\end{equation*}
		where~$c_\flat \leq 1$, up to eventually enlarging~$C_7 \geq 1$, $C_\natural \geq 1$, and assuming~$R \leq 1$. As above, we can now deduce~\eqref{eq:disfinale}.
	\end{proof}
	
	\begin{remark}
		\label{rem:cost-wk}
		By assuming that~$C_\mathcal{S} \geq 1$, a careful analysis of the previous proof reveals that we can make explicit the dependence on~$C_\mathcal{S}$. More precisely, it holds that
		\begin{equation*}
			\mathcal{M}(R) \coloneqq  R^{-n/s} \left(\frac{c_\flat R}{C_\mathcal{S}}\right)^{\! C_\natural C_\mathcal{S} \left(C_\mathcal{S}^2 / R \right)^\frac{2}{\mathfrak{q}-2}}\!.
		\end{equation*}
		Indeed, keeping track of~$C_\mathcal{S}$, we can take
		\begin{equation*}
			\mathcal{C} = \frac{1}{4C_\flat\max\left\{e,C_5\right\}C_\mathcal{S}^{2\mu+1}},
		\end{equation*}
		so that~$b_\mathfrak{p}$ can be bounded above by the general term of a convergent series, which is independent of~$C_\mathcal{S}$. Moreover, this allows to cancel out the dependence on~$C_\mathcal{S}$ in~\eqref{eq:est<q} caused by~\eqref{eq:A24}. In this way,~$C_\sharp$ does not depend on~$C_\mathcal{S}$ and we get the conclusion taking into account the factor that appears in~$\mathcal{A}(r_0)$.
		
		Moreover, we have~$\mu \geq 1$, that is~$\mathfrak{q} \leq 4$, for every~$n \geq 4$. Whereas, for~$n=3$, the same conclusion holds if we require that~$q > 4$ and~$2<\mathfrak{q}<4$.
	\end{remark}
	
	\subsection{Local boundedness comparison inequality.}
	
	We can also exploit Moser's iteration to establish the following result.
	
	\begin{theorem}
		\label{th:localbound}
		Let~$\Omega \subseteq \R^n$ be an open set, and let~$u_1,u_2\in C^1(\Omega)$ be a weak subsolution and a weak supersolution to~\eqref{eq:eqforu} with~$p > 2$, respectively. Moreover, suppose that~\eqref{eq:assunzione-pesi}-\eqref{eq:bound-gradiente} hold true.
		
		Assume that~$\overline{B_{5R}(x_0)} \subseteq \Omega$ for some~$R > 0$. Let~$q > 2$ be any real number satisfying~\eqref{eq:defi-q>}, and fix any~$2<\mathfrak{q}<2_M$, depending on~$n$,~$p$, and~$q$, such that~\eqref{eq:cond-q-0} holds true.
		
		Furthermore, suppose that~$c \in L^\infty(B_{5R}(x_0))$, that~$g_1(\cdot,u_1)-g_2(\cdot,u_2) \in L^q\!\left(B_{5R}(x_0)\right)$, and define~$\mathsf{k}\coloneqq\norma{g_1(\cdot,u_1)-g_2(\cdot,u_2)}_{L^q\left(B_{5R}(x_0) \, \cap \,\mathrm{supp} \left(u_1-u_2\right)_+\right)}$.
		
		Then, for every~$p_\sharp>1$, there exists a constant~$\mathfrak{C}_\sharp \geq 1$ such that
		\begin{equation*}
			\label{eq:locbound}
			\sup_{B_{R}(x_0)} \! (u_1-u_2) \leq \mathfrak{C}_\sharp \left(\frac{C_\mathcal{S}^2}{R}\right)^{\!\! \frac{2}{p_\sharp} \frac{\mathfrak{q}}{\mathfrak{q}-2}} \left( \norma{\left(u_1-u_2\right)_+}_{L^{p_\sharp}(B_{2R}(x_0))} + \mathsf{k} \right) \!,
		\end{equation*}
		where~$C_\mathcal{S} = C_\mathcal{S}(B_{5R}(x_0))$.
		
		The constant~$\mathfrak{C}_\sharp$ depends only on~$n$,~$p$,~$p_\sharp$,~$q$, $C_1$,~$\norma*{c}_{L^\infty(B_{5R}(x_0))}$, as well as on an upper bound on~$R$.
	\end{theorem}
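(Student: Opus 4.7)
My plan is to run Moser's iteration on the positive part of $u_1-u_2$, mimicking the positive-exponent portion of the proof of Theorem~\ref{th:wh}, but this time iterating all the way to $r=+\infty$ without ever passing through the delicate $\log v$ step. As a consequence, the resulting $R$-dependence will be only polynomial in $1/R$, rather than of super-polynomial type as in Theorem~\ref{th:wh}.

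\textbf{Setup and Caccioppoli inequality.} Set $V\coloneqq (u_1-u_2)_++\mathsf{k}$; after the standard approximation $V+\tau$, $\tau\to 0^+$, we may assume $V\geq \tau>0$. For a cutoff $\eta\in C^1_c(B_{5R}(x_0))$, $\eta\geq 0$, and $\beta\geq 1$, the function $\phi\coloneqq\eta^2(V^\beta-\mathsf{k}^\beta)$ is a nonnegative admissible test function that vanishes identically on $\{u_1\leq u_2\}$, together with $\nabla\phi$ there. Hence, subtracting the weak formulations of $u_1$ and $u_2$ tested against $\phi$ reduces every integral to the set $\{u_1>u_2\}$, on which $V=u_1-u_2+\mathsf{k}$ and $\nabla V=\nabla(u_1-u_2)$. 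Combining \eqref{eq:disp-basso}-\eqref{eq:disp-alto} with H\"older's inequality on the forcing term (exploiting the normalization $\|(g_1-g_2)/V\|_{L^q(\mathrm{supp}\,\phi)}\leq 1$, which follows from $V\geq\mathsf{k}$ on $\mathrm{supp}(u_1-u_2)_+$ together with the very definition of $\mathsf{k}$), and absorbing the sublinear Sobolev contribution by Young's inequality in the spirit of \eqref{eq:intf-silim}-\eqref{eq:eq-diff-final}, we arrive at
\begin{equation*}
\int_{\Omega}\rho\eta^2 V^{\beta-1}\abs{\nabla V}^2\,dx \leq K\,(1+\beta)^2 \int_{\Omega}\bigl[\eta^2+\abs{\nabla\eta}^2\bigr]V^{\beta+1}\,dx,
\end{equation*}
with $K$ depending on $n$, $p$, $q$, $\norma{c}_{L^\infty(B_{5R}(x_0))}$, $C_1$ and an upper bound for $C_\mathcal{S}$. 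Setting $w\coloneqq V^{(\beta+1)/2}$, $r\coloneqq\beta+1\geq 2$, and invoking the weighted Sobolev inequality \eqref{eq:sobolev-pesata}, we obtain the direct analogue of \eqref{eq:daiterare} valid for every $r\geq 2$:
\begin{equation*}
\norma{\eta w}_{L^{\mathfrak{q}}(\Omega)}^2 \leq C_2\,r^2 \int_{\Omega}\bigl[\eta^2+\abs{\nabla\eta}^2\bigr]w^2\,dx.
\end{equation*}

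\textbf{Iteration, absorbing argument and conclusion.} Choosing nested radii $R\leq h'<h''\leq 2R$ and cutoffs with $\abs{\nabla\eta}\lesssim 1/(h''-h')$ yields the reverse-H\"older step
\begin{equation*}
\Phi(\chi r,h',V) \leq \left(\frac{C_3\,r}{h''-h'}\right)^{\!\!2/r} \Phi(r,h'',V) \quad\text{for every } r\geq \max\{p_\sharp,2\},
\end{equation*}
with $\chi=\mathfrak{q}/2>1$ and $C_3\geq 1$ controlled by an upper bound for $C_\mathcal{S}$. Iterating with $r_k\coloneqq \max\{p_\sharp,2\}\,\chi^k$ and $h_k\coloneqq R(1+2^{-k})$ and passing to $k\to+\infty$, we deduce
\begin{equation*}
\sup_{B_R(x_0)} V \leq \mathfrak{C}_\sharp\left(\frac{C_\mathcal{S}^2}{R}\right)^{\!\!\Sigma} \norma{V}_{L^{\max\{p_\sharp,2\}}(B_{2R}(x_0))}, \quad \Sigma\coloneqq\sum_{k\geq 0}\frac{2}{r_k}=\frac{2}{\max\{p_\sharp,2\}}\cdot\frac{\mathfrak{q}}{\mathfrak{q}-2},
\end{equation*}
where the joint $(C_\mathcal{S},R)$-dependence is tracked as in Remark~\ref{rem:cost-wk}. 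For $p_\sharp\geq 2$ this already matches the claim; for $1<p_\sharp<2$, a standard absorbing argument on the family of balls $B_\rho(x_0)$, $\rho\in[R,2R]$, combined with the interpolation $\norma{V}_{L^2}\leq\norma{V}_{L^\infty}^{1-p_\sharp/2}\norma{V}_{L^{p_\sharp}}^{p_\sharp/2}$ and Young's inequality, upgrades the right-hand side to the $L^{p_\sharp}$ norm, raising the exponent from $\mathfrak{q}/(\mathfrak{q}-2)$ to $(2/p_\sharp)\cdot\mathfrak{q}/(\mathfrak{q}-2)$. Since $\sup_{B_R}(u_1-u_2)\leq\sup_{B_R} V$ and $\norma{V}_{L^{p_\sharp}(B_{2R})}\leq\norma{(u_1-u_2)_+}_{L^{p_\sharp}(B_{2R})}+|B_{2R}|^{1/p_\sharp}\mathsf{k}$, and since $R$ is bounded above, the stated estimate follows. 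The main obstacle is the precise bookkeeping required to arrive at the exact exponent $(2/p_\sharp)\cdot\mathfrak{q}/(\mathfrak{q}-2)$ on $C_\mathcal{S}^2/R$; in contrast with Theorem~\ref{th:wh}, however, the iteration here is considerably cleaner, since it involves only $r$ bounded away from $0$ and therefore avoids both the $\log v$ step and the super-polynomial factor $(c_\flat R)^{C_\natural/R^{2/(\mathfrak{q}-2)}}$.
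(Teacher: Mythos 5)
Your approach is essentially the same as the paper's: a single Moser iteration with strictly positive exponents, never descending to the $\log v$ level, with the reverse-H\"older step
$\Phi(\chi r,h',V)\leq\bigl(C_3 C_\mathcal{S}^2 r/(h''-h')\bigr)^{2/r}\Phi(r,h'',V)$
and the summation $\sum_{k\geq 0}\frac{2}{r_0\chi^k}=\frac{2}{r_0}\frac{\mathfrak q}{\mathfrak q-2}$ producing the exponent. The only place where you deviate is the restriction $\beta\geq 1$, equivalently $r\geq 2$, which forces the two-stage detour for $1<p_\sharp<2$: iterate from $r_0=2$, then interpolate $\norma{V}_{L^2}\leq\norma{V}_{L^\infty}^{1-p_\sharp/2}\norma{V}_{L^{p_\sharp}}^{p_\sharp/2}$ and absorb by Young's inequality over the family of radii $\rho\in[R,2R]$. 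That detour is correct and does give the exponent $\frac{2}{p_\sharp}\cdot\frac{\mathfrak q}{\mathfrak q-2}$, but it is unnecessary. The test function $\phi=\eta^2(v^\beta-\mathsf k^\beta)$ is non-negative and admissible for every $\beta>0$: the only term that could misbehave for $\beta<1$ is $v^{\beta-1}$, and after your own $\tau$-regularization one has $v\geq\tau>0$, so there is no singularity. The paper therefore assumes only $\beta\geq p_\sharp-1>0$; the resulting factors of order $\max\{1,1/\beta\}\leq\max\{1,1/(p_\sharp-1)\}$ in the Caccioppoli estimate stay universally bounded and get absorbed into $\mathfrak C_\sharp$, which is allowed to depend on $p_\sharp$. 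With this, the iteration runs with $r_k=p_\sharp\chi^k$ directly from $r_0=p_\sharp$, yielding $\frac{2}{p_\sharp}\frac{\chi}{\chi-1}=\frac{2}{p_\sharp}\frac{\mathfrak q}{\mathfrak q-2}$ in a single pass, with no interpolation or Giaquinta-type absorption step.
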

	\begin{proof}
		Define
		\begin{equation*}
			v \coloneqq \left(u_1-u_2\right)_+ + \mathsf{k},
		\end{equation*}
		then take~$\eta \in C^1_c(B_{5R}(x_0))$, with~$\eta\geq 0$ in $\Omega$, and set
		\begin{equation*}
			\phi \coloneqq \eta^2 \left( v^\beta -\mathsf{k}^\beta \right) \geq 0
		\end{equation*}
		for~$\beta>0$, so that
		\begin{equation*}
			\nabla\phi=2\eta \left( v^\beta -\mathsf{k}^\beta \right) \nabla\eta + \beta\eta^2 v^{\beta-1} \,\nabla v.
		\end{equation*}
		Observe that~$\phi=0$ on the set~$\{x \in \Omega \,\lvert\, u_1(x) \leq u_2(x)\}$, so the integrals need to be evaluated only on the support of~$\left(u_1-u_2\right)_+$. Moreover, it holds that~$\phi \leq \eta^2 v^\beta$.
		
		By using~$\phi$ as test function in~\eqref{eq:eqforu} and setting~$f \coloneqq g_1\left(\cdot,u_1\right)-g_2\left(\cdot,u_2\right)$, we get
		\begin{align}
			\label{eq:eq-diff-11}
			\int_\Omega &\beta \eta^2 v^{\beta-1} \left\langle \abs*{\nabla u_1}^{p-2}\,\nabla u_1 - \abs*{\nabla u_2}^{p-2}\,\nabla u_2, \nabla v\right\rangle \\
			\notag
			&+ 2\eta \left( v^\beta -k^\beta \right) \left\langle \abs*{\nabla u_1}^{p-2}\,\nabla u_1 - \abs*{\nabla u_2}^{p-2}\,\nabla u_2, \nabla \eta\right\rangle dx \leq \int_\Omega c \left(u_2-u_1\right) \phi + f \phi \, dx.
		\end{align}
		Exploiting~\eqref{eq:disp-basso}-\eqref{eq:disp-alto}, we can estimate the terms on the left-hand side and the first one on the righ-hand side of~\eqref{eq:eq-diff-11} on~$\mathrm{supp} \left(u_1-u_2\right)_+$ as follows
		\begin{gather*}
			\left\langle \abs*{\nabla u_1}^{p-2}\,\nabla u_1 - \abs*{\nabla u_2}^{p-2}\,\nabla u_2, \nabla v\right\rangle \geq c(p) \rho \,\abs{\nabla v}^2, \\
			\abs*{2\eta \left( v^\beta -k^\beta \right) \left\langle \abs*{\nabla u_1}^{p-2}\,\nabla u_1 - \abs*{\nabla u_2}^{p-2}\,\nabla u_2, \nabla \eta\right\rangle} \leq 2C(p) \eta v^\beta \rho \,\abs{\nabla v}\abs{\nabla \eta}, \\
			\abs*{c \left(u_2-u_1\right) \phi} \leq \norma*{c}_{L^\infty(B_{5R}(x_0))} \eta^2 v^{\beta+1},
		\end{gather*}
		where~$\rho \coloneqq \left(\abs*{\nabla u_1}+\abs*{\nabla u_2} \right)^{p-2}$. To estimate the last integral we argue exactly as for~\eqref{eq:est-f<1}-\eqref{eq:intf-silim}, adopting the notation of the proof of Theorem~\ref{th:wh}, to get
		\begin{align*}
			\int_\Omega f \phi \, dx &\leq \int_\Omega \,\frac{\abs{f}}{v} \, \eta^2 v^{\beta+1} \, dx \\
			&\leq C_q^\frac{1}{q'} C_\mathcal{S}^{(1-\theta)\frac{\mathfrak{q}}{q'}} \int_\Omega \left[\eta^2+\rho\,\abs*{\nabla\eta}^2\right] v^{\beta+1} \, dx \\
			&\quad+ C_q^\frac{1}{q'} C_\mathcal{S}^{(1-\theta)\frac{\mathfrak{q}}{q'}} \left(\beta+1\right)^{(1-\theta)\frac{\mathfrak{q}}{q'}}  \left(\int_{\Omega} \eta^2 v^{\beta+1} dx \right)^{\!\!\frac{\theta}{q'}} \left( \int_{\Omega} \rho\eta^2 v^{\beta-1} \,\abs{\nabla v}^2 \, dx \right)^{\!\!(1-\theta)\frac{\mathfrak{q}}{2q'}}\!.
		\end{align*}
		Since we may assume that~$\beta \geq p_\sharp-1>0$,~\eqref{eq:eq-diff-11} yields
		\begin{equation*}
			\begin{split}
				\int_\Omega &\rho\eta^2 v^{\beta-1} \,\abs{\nabla v}^2 \, dx \leq \frac{2C(p)}{c(p)(p_\sharp-1)} \int_\Omega \rho \eta v^\beta \,\abs{\nabla v}\abs{\nabla \eta} \, dx + \frac{\norma*{c}_{L^\infty(B_{5R}(x_0))}}{c(p)(p_\sharp-1)} \int_\Omega \eta^2 v^{\beta+1} \, dx \\
				&+ \frac{C_q^\frac{1}{q'}}{c(p)(p_\sharp-1)} \, C_\mathcal{S}^{(1-\theta)\frac{\mathfrak{q}}{q'}} \int_\Omega \left[\eta^2+\rho\,\abs*{\nabla\eta}^2\right] v^{\beta+1} \, dx \\
				&+ C_q^\frac{1}{q'} C_\mathcal{S}^{(1-\theta)\frac{\mathfrak{q}}{q'}} \frac{p_\sharp^{(1-\theta)\frac{\mathfrak{q}}{q'}}}{c(p)(p_\sharp-1)} \left(\int_{\Omega} \eta^2 v^{\beta+1} \, dx \right)^{\!\!\frac{\theta}{q'}} \left( \int_{\Omega} \rho\eta^2 v^{\beta-1} \,\abs{\nabla v}^2 \, dx \right)^{\!\!(1-\theta)\frac{\mathfrak{q}}{2q'}}\!. 
			\end{split}
		\end{equation*}
		By applying Young's inequality, we conclude that
		\begin{equation}
			\label{eq:eq-diff-22}
			\int_\Omega \rho\eta^2 v^{\beta-1} \,\abs{\nabla v}^2 \, dx \leq 4K \max\left\{1, C_\mathcal{S}^2 \right\} \int_\Omega \left[\eta^2+\rho\,\abs*{\nabla\eta}^2\right] v^{\beta+1} \, dx,
		\end{equation}
		where~$K \geq 1$ depends only on~$n$,~$p$,~$p_\sharp$,~$q$, and~$\norma*{c}_{L^\infty(B_{5R}(x_0))}$. Define~$r \coloneqq \beta+1$ and
		\begin{equation*}
			w \coloneqq v^\frac{\beta+1}{2},
		\end{equation*}
		so that we can rewrite~\eqref{eq:eq-diff-22} as
		\begin{equation}
			\label{eq:eq-diff-final-2}
			\int_\Omega \rho\eta^2 \,\abs{\nabla w}^2 \, dx \leq
			K \max\left\{1, C_\mathcal{S}^{2} \right\} r^2 \int_\Omega \left[\eta^2+\rho\,\abs*{\nabla\eta}^2\right] w^2 \, dx.
		\end{equation}
		Since~$2<\mathfrak{q}<2_M$, by~\eqref{eq:sobolev-pesata} and~\eqref{eq:eq-diff-final-2}, we obtain
		\begin{equation*}
			\norma{\eta w}^2_{L^\mathfrak{q}(\Omega)} \leq C_2^2 \max\left\{1, C_\mathcal{S}^{2} \right\}^{\!2} r^2 \int_\Omega \left[\eta^2+\abs*{\nabla\eta}^2\right] w^2 \, dx,
		\end{equation*}
		for some~$C_2 \geq 1$ depending on~$n$,~$p$,~$p_\sharp$,~$q$,~$C_1$, and~$\norma*{c}_{L^\infty(B_{5R}(x_0))}$. Furthermore, we may suppose that~$C_\mathcal{S} \geq 1$, set~$\chi \coloneqq \mathfrak{q}/2>1$, and assume that~$R \leq h' < h'' \leq 5R$. By applying the iterative technique, we deduce that
		\begin{equation*}
			\Phi(\chi r,h',v) \leq \left(\frac{C_3 \, C_\mathcal{S}^{2} \, r}{h''-h'}\right)^{\!\! \frac{2}{r}} \Phi(r,h'',v),
		\end{equation*}
		where~$C_3 \geq 1$ depends on~$n$,~$p$,~$p_\sharp$,~$q$,~$C_1$,~$\norma*{c}_{L^\infty(B_{5R}(x_0))}$, as well as on an upper bound on~$R$. We now take
		\begin{equation*}
			r_k = p_\sharp \chi^k \quad \mbox{ and } \quad h_k = R\left(1+\frac{1}{2^{k}}\right),
		\end{equation*}
		so that~$\beta_k \geq r_k-1 \geq r_0-1 = p_\sharp-1$. Iterating, we get
		\begin{equation*}
			\begin{split}
				\Phi(r_{k+1},h_{k+1},v) &\leq \left(\frac{2^{k+1} C_3 \, C_\mathcal{S}^{2} \, r_k}{R}\right)^{\!\! \frac{2}{r_k}} \Phi(r_k,h_k,v) \\
				&\leq \left(2\chi\right)^{\sum_{j=0}^{k}\frac{j}{\chi^j}} \left(\frac{2 C_3 \, C_\mathcal{S}^2  \, p_\sharp}{R}\right)^{\!\! \frac{2}{p_\sharp} \sum_{j=0}^{k}\frac{1}{\chi^j}} \Phi(p_\sharp,2R,v)
			\end{split}
		\end{equation*}
		and, taking the limit for~$k \to +\infty$, we have
		\begin{equation*}
			\Phi(+\infty,R,v) \leq C \left(\frac{C_\mathcal{S}^2}{R}\right)^{\!\! \frac{2}{p_\sharp} \frac{\chi}{\chi-1}} \Phi(p_\sharp,2R,v),
		\end{equation*}
		for some~$C \geq 1$ depending on~$n$,~$p$,~$p_\sharp$,~$q$,~$C_1$,~$\norma*{c}_{L^\infty(B_{5R}(x_0))}$, as well as on an upper bound on~$R$. This concludes the proof.
	\end{proof}
	
	A straightforward adaptation of the previous proof, as pointed out in Theorem~8.25 of~\cite{gt}, allows us to obtain the local boundedness estimate at the boundary, namely, the following result.
	
	\begin{theorem}
		\label{th:localbound-bordo}
		Let~$\Omega \subseteq \R^n$ be an open set, and let~$u_1,u_2\in C^1(\overline{\Omega})$ be a weak subsolution and a weak supersolution to~\eqref{eq:eqforu} with~$p>2$, respectively. Moreover, suppose that~\eqref{eq:assunzione-pesi}-\eqref{eq:bound-gradiente} hold true.
		
		Assume that~$x_0 \in \R^n$, let~$q > 2$ be any real number satisfying~\eqref{eq:defi-q>}, and fix any~$2<\mathfrak{q}<2_M$, depending on~$n$,~$p$, and~$q$, such that~\eqref{eq:cond-q-0} holds true. Furthermore, suppose that~$c \in L^\infty\!\left(B_{5R}(x_0) \cap \Omega \right)$, that~$g_1(\cdot,u_1)-g_2(\cdot,u_2) \in L^q\!\left(B_{5R}(x_0) \cap \Omega \right)$, and set
		\begin{align*}
			M &\coloneqq \sup_{\partial\Omega \cap B_{2R}(x_0)} \! \left(u_1-u_2\right)_+, \\
			\mathsf{k} &\coloneqq\norma{g_1(\cdot,u_1)-g_2(\cdot,u_2)}_{L^q(B_{5R}(x_0) \cap \left\{x \in \Omega \,\lvert\, u_1-u_2>M\right\})}, \\
			\left(u_1-u_2\right)_M^+ &\coloneqq
			\begin{dcases}
				\sup\left\{u_1-u_2,M\right\}	& \quad \mbox{if } x \in \Omega, \\
				M								& \quad \mbox{if } x \not\in \Omega.
			\end{dcases}
		\end{align*}
		
		Then, for every~$p_\sharp>1$, there exists a constant~$\mathfrak{C}_\sharp \geq 1$ such that
		\begin{equation*}
			\sup_{B_{R}(x_0)} \!\left(u_1-u_2\right)_M^+ \leq \mathfrak{C}_\sharp \left(\frac{C_\mathcal{S}^2}{R}\right)^{\!\! \frac{2}{p_\sharp} \frac{\mathfrak{q}}{\mathfrak{q}-2}} \left( \norma{\left(u_1-u_2\right)_M^+}_{L^{p_\sharp}(B_{2R}(x_0))} + \mathsf{k} \right) \!,
		\end{equation*}
		where~$C_\mathcal{S} = C_\mathcal{S}(B_{5R}(x_0) \cap \Omega)$.
		
		The constant~$\mathfrak{C}_\sharp$ depends only on~$n$,~$p$,~$p_\sharp$,~$q$, $C_1$,~$\norma*{c}_{L^\infty(B_{5R}(x_0) \cap \Omega)}$, as well as on an upper bound on~$R$.
	\end{theorem}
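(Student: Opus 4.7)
The plan is to closely imitate the proof of Theorem~\ref{th:localbound}, with the auxiliary function redefined to respect the boundary values, in the spirit of Theorem~8.25 in~\cite{gt}. Specifically, I would set
\begin{equation*}
v \coloneqq (u_1-u_2)_M^+ + \mathsf{k},
\end{equation*}
so that $v \geq M+\mathsf{k}>0$ everywhere, $v \equiv M+\mathsf{k}$ outside $\Omega$ and on $\{u_1-u_2 \leq M\}$, while $v = u_1-u_2+\mathsf{k}$ on $\{u_1-u_2 > M\}$. Then take $\eta \in C^1_c(B_{2R}(x_0))$ with $\eta\geq 0$ and employ as test function
\begin{equation*}
\phi \coloneqq \eta^2 \left( v^\beta - (M+\mathsf{k})^\beta \right), \qquad \beta>0.
\end{equation*}
The key point is that $\phi$ vanishes wherever $v = M+\mathsf{k}$, and in particular on $\partial\Omega \cap \mathrm{supp}\,\eta \subseteq \partial\Omega \cap B_{2R}(x_0)$ by the very definition of $M$; hence $\mathrm{supp}\,\phi \subseteq \{u_1-u_2>M\} \cap B_{2R}(x_0) \subseteq \Omega$, so $\phi$ is admissible in the weak sub/supersolution inequalities. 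Moreover, on $\mathrm{supp}\,\phi$ one has $v = u_1-u_2+\mathsf{k}$, whence $\nabla v = \nabla(u_1-u_2)$ and the pointwise bound $|u_1-u_2|\leq v$.

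With this setup, every step in the proof of Theorem~\ref{th:localbound} carries over. Subtracting the weak inequalities and invoking~\eqref{eq:disp-basso}--\eqref{eq:disp-alto} yields the same control of the $p$-Laplacian terms; the first-order term $|c(u_2-u_1)\phi|$ is bounded by $\norma{c}_{L^\infty}\eta^2 v^{\beta+1}$ thanks to $|u_1-u_2|\leq v$; and $\int f\phi\, dx$ is treated exactly as in~\eqref{eq:est-f<1}--\eqref{eq:intf-silim}, the decisive factor $\norma{f/v}_{L^q(\mathrm{supp}\,\phi)}\leq 1$ now arising because $\mathsf{k}$ is the $L^q$-norm of $f$ on $\{u_1-u_2>M\}$, which contains $\mathrm{supp}\,\phi$. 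Setting $w = v^{(\beta+1)/2}$ one arrives at the analog of~\eqref{eq:eq-diff-final-2}, and Moser's iteration with $r_k = p_\sharp\chi^k$ and $h_k = R\left(1+1/2^k\right)$ then produces
\begin{equation*}
\sup_{B_R(x_0)} v \leq \mathfrak{C}_\sharp \left(\frac{C_\mathcal{S}^2}{R}\right)^{\!\!\frac{2}{p_\sharp}\frac{\mathfrak{q}}{\mathfrak{q}-2}} \norma{v}_{L^{p_\sharp}(B_{2R}(x_0))}.
\end{equation*}
The desired conclusion follows via the triangle inequality after unwinding $v = (u_1-u_2)_M^+ + \mathsf{k}$ and absorbing the contribution $\mathsf{k}\,\abs{B_{2R}(x_0)}^{1/p_\sharp}$ into the final constant by means of the upper bound on $R$.

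The main delicate point is the admissibility of $\phi$: since $u_1,u_2$ are only $C^1(\overline{\Omega})$, the test function must have compact support in $\Omega$, and this is precisely why $\eta$ is restricted to $B_{2R}(x_0)$ rather than $B_{5R}(x_0)$ as one would do in the purely interior case — it guarantees that $\partial\Omega \cap \mathrm{supp}\,\eta \subseteq \partial\Omega \cap B_{2R}(x_0)$, where the definition of $M$ forces $\phi \equiv 0$. A minor technicality is that $(u_1-u_2)_M^+ = \sup\{u_1-u_2,M\}$ is only Lipschitz (not $C^1$), so $\phi$ a priori belongs to $W^{1,\infty}$ rather than $C^1$; this is handled by a standard smooth approximation of $\max$, which does not affect any of the resulting estimates.
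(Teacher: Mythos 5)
Your proposal is correct and takes essentially the same approach the paper intends: the paper explicitly describes Theorem~\ref{th:localbound-bordo} as "a straightforward adaptation of the previous proof, as pointed out in Theorem~8.25 of~\cite{gt}", and your argument spells out exactly that adaptation — replacing $(u_1-u_2)_+ + \mathsf{k}$ by $(u_1-u_2)_M^+ + \mathsf{k}$, testing with $\eta^2(v^\beta - (M+\mathsf{k})^\beta)$, and observing that the definition of~$M$ forces the test function to vanish on $\partial\Omega \cap B_{2R}(x_0)$ so that the integrals localize inside $\{u_1-u_2>M\}\cap\Omega$, where the interior estimates carry over verbatim. One small imprecision worth noting: your remark that $\eta$ must be restricted to $B_{2R}(x_0)$ "rather than $B_{5R}(x_0)$ as one would do in the purely interior case" overstates the contrast, since in the interior proof the Moser iteration already uses cutoffs supported in $B_{2R}(x_0)$ (the radii $h_k = R(1+1/2^k)$ range over $[R,2R]$); what matters is simply that the support of every cutoff in the iteration lies in $B_{2R}(x_0)$, which is exactly where $M$ is defined.
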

	
	\begin{remark}
	\label{rem:forma-eq}
		A careful examination of their proofs reveals that Proposition~\ref{prop:wcp}, Theorem~\ref{th:wh}, Theorem~\ref{th:localbound}, and Theorem~\ref{th:localbound-bordo} hold whenever one has
		\begin{equation*}
			-\Delta_p u_1 + c \, u_1 -g_1(x,u_1) \leq -\Delta_p u_2 + c \, u_2 -g_2(x,u_2) \quad \mbox{in } \Omega.
		\end{equation*}
	\end{remark}
	
	
	\section{Preliminary results -- The case of the ball}
	\label{sec:prelim-bolla-plap}
	
	In this section and in Section~\ref{sec:proof-ball} below, we will refer to a constant as \textit{universal} if it depends only on~$n$,~$p$, $C_0$,~$\norma{f}_{C^{0,1}([0,C_0])}$,~$f$,~$\norma{\kappa}_{L^\infty(B_1)}$, and~$F$.
	
	\subsection{A priori estimates}
	
	First, by Theorem~1 in~\cite{lieb}, for a bounded weak solution of~\eqref{eq:mainprob} there exist an~$\alpha_\star \in (0,1)$, depending only on~$n$,~$p$,~$\norma{f}_{L^\infty([0,C_0])}$, and~$\norma{\kappa}_{L^\infty(B_1)}$, and a constant~$C_1 \geq 1$, depending only on~$n$,~$p$,~$C_0$,~$\norma{f}_{L^\infty([0,C_0])}$, and~$\norma{\kappa}_{L^\infty(B_1)}$, such that~$u \in C^{1,\alpha_\star}(\overline{B_1})$ with
	\begin{equation}
		\label{eq:bound-univ-gradiente}
		\norma{\nabla u}_{L^\infty(\overline{B_1})} + \left[\nabla u\right]_{C^{0,\alpha_\star}(\overline{B_1})} \leq C_1.
	\end{equation}
	
	Following~\cite{cicopepo}, we show that there exists a~$C_2 \geq 1$, depending only on~$n$,~$p$,~$C_0$, $\norma{f}_{L^\infty([0,C_0])}$,~$\norma{\kappa}_{L^\infty(B_1)}$, and~$F$, such that
	\begin{equation}
		\label{eq:ulinearbound}
		u(x) \geq \frac{1}{C_2} (1-\abs{x}) \quad \mbox{for all } x \in B_1.
	\end{equation}
	
	We start by claiming that the maximum of~$u$ is attained in a ball~$B_{1-\delta_1}$ for some~$\delta_1>0$, which depends only on~$C_0$ and~$C_1$. Indeed, take~$x \in \overline{B_1} \setminus B_{1-\delta_1}$ and denote with~$x_\star \in \partial B_1$ the point belonging to the same radius from the origin. Thus, by~\eqref{eq:bound-univ-gradiente}, we have
	\begin{equation*}
		u(x) = \abs*{u(x)-u(x_\star)} \leq C_1 \abs{x-x_\star} \leq C_1 \delta_1.
	\end{equation*}
	Choosing~$\delta_1 < \frac{1}{C_0C_1}$, we deduce
	\begin{equation*}
		u(x) < \frac{1}{C_0} \quad \mbox{for all } x \in \overline{B_1} \setminus B_{1-\delta_1}.
	\end{equation*}
	
	Now, we seek for a lower bound on~$u$ in~$\overline{B_{1-\delta_1}}$. Here, the technical assumption~\eqref{eq:ipotesif} plays a central role. By Theorems~5,~7, and~9 in~\cite{serr}, there exists a~$C_3 \geq 1$, depending on~$n$,~$p$,~$C_0$,~$\norma{f}_{L^\infty([0,C_0])}$,~$\norma{\kappa}_{L^\infty(B_1)}$, and~$F$, such that
	\begin{equation}
		\label{eq:serrin-harnack}
		\frac{1}{C_0} \leq \sup_{B_{1-\delta_1}} u \leq C_3 \inf_{B_{1-\delta_1}} u
	\end{equation}
	which, in turn, implies
	\begin{equation}
		\label{eq:bound-basso-u-B1d}
		u \geq \frac{1}{C_0 C_3} \eqqcolon c_\flat \quad \mbox{in } \overline{B_{1-\delta_1}}.
	\end{equation}
	
	Finally, we construct an appropriate barrier in~$B_1 \setminus \overline{B_{1-\delta_1}}$. Define the radial function
	\begin{equation*}
		\Psi(r) \coloneqq B\left(e^{\beta(1-r)}-1\right),
	\end{equation*}
	for some~$B,\beta>0$ and~$1-\delta_1 \leq r \leq 1$, and note that~$\Psi(1)=0$. Since~$\Psi$ is radial, we can compute
	\begin{equation*}
		\Delta_p \Psi = B \beta \,\abs*{\Psi'}^{p-2} \left( (p-1)\beta-\frac{n-1}{r} \right) e^{\beta(1-r)} \geq 0,
	\end{equation*}
	for every~$1-\delta_1 \leq r \leq 1$, provided~$\beta \geq \frac{n-1}{(p-1)(1-\delta_1)}$. Moreover, let~$B>0$ be such that~$\Psi(1-\delta_1)=c_\flat$. Therefore, we have
	\begin{equation*}
		\Delta_p u \leq 0 \leq \Delta_p \Psi \quad \text{in} \; B_1 \setminus \overline{B_{1-\delta_1}}
	\end{equation*}
	with~$u=\Psi=0$ on~$\partial B_1$, and~$\Psi \leq u$ on~$\partial B_{1-\delta_1}$. Hence, the weak comparison principle of Theorem~2.4.1 in~\cite{ps} implies
	\begin{equation}
		\label{eq:Psi<u}
		\Psi \leq u \quad \mbox{in } B_1 \setminus \overline{B_{1-\delta_1}}.
	\end{equation}
	Exploiting this, we deduce that
	\begin{equation*}
		u(x) \geq c_\star \left(1-\abs{x}\right) \quad \mbox{for all } x \in \overline{B_1} \setminus B_{1-\delta_1},
	\end{equation*}
	for some~$c_\star>0$ which depends on~$n$,~$p$,~$C_0$, ~$\norma{f}_{L^\infty([0,C_0])}$,~$\norma{\kappa}_{L^\infty(B_1)}$, and~$F$. The last estimate, together with~\eqref{eq:bound-basso-u-B1d}, leads to the validity of~\eqref{eq:ulinearbound}.
	
	\begin{remark}
		Assumption~\eqref{eq:ipotesif} is merely technical and enables us to exploit the Harnack inequality in~\cite{serr} and deduce~\eqref{eq:serrin-harnack}. Thus, once one has an estimate of the type~\eqref{eq:serrin-harnack}, the above procedure can be carried out analogously to get~\eqref{eq:ulinearbound}.
	\end{remark}
	
	In addition, this procedure allows us to estimate the interior normal derivative of~$u$ at a boundary point in terms of universal constants. Indeed, let~$x \in \partial B_1$ and~$\nu$ be the inner normal vector at~$x$. Since~$u(x)=\Psi(x)=0$, by~\eqref{eq:Psi<u},
	\begin{equation*}
		\label{eq:stimadernormale}
		\partial_\nu u(x) \geq \partial_\nu \Psi(x) = - \partial_r \Psi(x) = \beta B>0.
	\end{equation*}
	
	This, together with~\eqref{eq:bound-univ-gradiente}, allows us to quantify the distance between~$\partial B_1$ and the set of critical points~$\mathcal{Z}_u$. Assume~$\delta_2 \in (0,1)$, take~$x \in \overline{B_1} \setminus B_{1-\delta_2}$, and determine~$x_\star \in \partial B_1$ as before, then
	\begin{equation}
		\label{eq:grad>0}
		\beta B-\abs*{\nabla u(x)} \leq \abs*{\nabla u(x_\star)}-\abs*{\nabla u(x)} \leq \left[\nabla u\right]_{C^{0,\alpha_\star}(\overline{B_1})}  \abs{x-x_\star}^{\alpha_\star} \leq C_1 \delta_2^{\alpha_\star}.
	\end{equation}
	Thus, by taking
	\begin{equation*}
		\delta_2 < \left(\frac{\beta B}{C_1}\right)^{\!\!\frac{1}{\alpha_\star}} \eqqcolon \delta_0,
	\end{equation*}
	we get~$\abs*{\nabla u(x)}>0$. Consequently, with this choice,~$\mathcal{Z}_u \Subset B_{1-\delta_2}$.
	
	\subsection{Summability property of the gradient.}
	
	In order to apply the results of Section~\ref{sec:tectool}, we need to prove that if~$u \in C^1(\overline{B_1})$ is a weak solution to~\eqref{eq:mainprob} for~$p>2$, then~$\rho=\abs*{\nabla u}^{p-2}$ fulfills~\eqref{eq:condizione-peso}, at least for~$\gamma=0$.
	
	This is essentially Theorem~2.3 in~\cite{ds-poinc}. Although the poof is already available, we need a precise quantification of the constant appearing therein. Since this will be enough for our purposes, we state the result for~$\gamma=0$.
	
	Finally, we are going to assume that
	\begin{equation}
		\label{eq:k-below}
		\kappa \geq \underline{\kappa} >0 \quad \mbox{a.e.\ in } B_1.
	\end{equation}
	
	\begin{theorem}
	\label{th:int-grad}
		Let~$u\in C^1(\overline{B_1})$ be a weak solution to~\eqref{eq:mainprob}. Assume that~\eqref{eq:mainhyp-f}-\eqref{eq:ipotesif} and~\eqref{eq:k-below} are in force. Then, for every~$r \in (0,1)$, we have
		\begin{equation*}
			\label{eq:integrabilita-bolla}
			\int_{B_1}  \frac{1}{\abs*{\nabla u}^{(p-1)r}} \, dx \leq \mathscr{C}
		\end{equation*}
		for some~$\mathscr{C}>0$ depending only on~$n$,~$p$,~$r$,~$C_0$,~$\norma{f}_{C^{0,1}([0,C_0])}$,~$f$,~$\norma{\kappa}_{L^\infty(B_1)}$,~$\underline{\kappa}$, and~$F$.
	\end{theorem}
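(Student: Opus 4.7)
\textit{Plan of proof.} The strategy is to revisit Theorem~2.3 of~\cite{ds-poinc}, which guarantees the qualitative integrability of~$1/|\nabla u|^{(p-1)r}$, and to make every appearing constant explicit in terms of the data listed in the statement. The key observation is that the entire Damascelli--Sciunzi argument is driven by three quantitative inputs, all of which are already made universal in the a priori estimates of Section~\ref{sec:prelim-bolla-plap}: a lower bound on~$|\nabla u|$ near~$\partial B_1$, a lower bound on~$u$ away from~$\partial B_1$, and a lower bound on~$-\Delta_p u$ on that same set.

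First I would split the ball as~$B_1 = (B_1\setminus \overline{B_{1-\delta_0}}) \cup \overline{B_{1-\delta_0}}$, where~$\delta_0$ is the universal radius introduced in~\eqref{eq:grad>0}. On the annular region, the estimate~$|\nabla u| \geq \beta B/2 \eqqcolon c_\star >0$ (which follows from~\eqref{eq:grad>0} by reducing~$\delta_0$ if needed) gives the trivial bound
\begin{equation*}
\int_{B_1\setminus \overline{B_{1-\delta_0}}} \frac{1}{|\nabla u|^{(p-1)r}}\,dx \leq c_\star^{-(p-1)r}\,|B_1|,
\end{equation*}
where~$c_\star$ depends only on the universal data. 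On~$\overline{B_{1-\delta_0}}$, instead, the bound~\eqref{eq:bound-basso-u-B1d} yields~$u \geq c_\flat>0$, hence by~\eqref{eq:mainhyp-f} and~\eqref{eq:k-below} one has
\begin{equation*}
-\Delta_p u = \kappa\,f(u) \geq \underline{\kappa}\,\min_{[c_\flat,\,C_0]} f \eqqcolon \lambda >0 \quad \text{in } \overline{B_{1-\delta_0}},
\end{equation*}
where~$\lambda$ depends only on~$\underline{\kappa}$, on the lower bound~$c_\flat$ (universal by Section~\ref{sec:prelim-bolla-plap}), on~$C_0$, and on~$f$.

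At this point I would run the Damascelli--Sciunzi test function scheme on the compact set~$\overline{B_{1-\delta_0}}$. Differentiating~\eqref{eq:mainprob} one obtains that each component~$u_i = \partial_i u$ is a weak solution of the linearized equation; testing it against~$\phi = \eta^2\,u_i\,(|\nabla u|^2+\varepsilon)^{-(1+\beta)/2}$, summing over~$i$, and using the ellipticity inequality~\eqref{eq:disp-basso} leads, after absorbing the gradient terms via Young's inequality, to the Caccioppoli-type estimate
\begin{equation*}
\int \eta^2 \,(|\nabla u|^2+\varepsilon)^{(p-2-\beta)/2}\,|D^2 u|^2\,dx \leq C\int (\eta^2 + |\nabla\eta|^2)\,(|\nabla u|^2+\varepsilon)^{(p-2-\beta)/2}\,dx + \mathcal{R},
\end{equation*}
valid for~$\beta\in(0,1)$, where the remainder~$\mathcal{R}$ is controlled by~$\|f\|_{C^{0,1}}$,~$\|\kappa\|_{L^\infty}$,~$C_0$, and~$C_1$. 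Passing to the limit~$\varepsilon\to 0$ and combining this estimate with the lower bound~$-\Delta_p u \geq \lambda>0$ (which prevents~$|\nabla u|$ from being small on sets of positive measure by a standard argument: if $|\nabla u|$ vanished on a set of positive measure one would contradict the positivity of~$-\Delta_p u$ through the equation itself) gives the desired integrability. I would then track the constant~$C$ appearing along the Moser--type iteration of~\cite{ds-poinc} step by step, checking that it depends only on~$n$,~$p$,~$r$,~$C_0$,~$C_1$,~$\lambda$, and the geometric constants arising from~$\delta_0$.

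The main obstacle is precisely the bookkeeping in the last step: the original proof of~\cite{ds-poinc} hides the dependence of the constant on the distance to the critical set and on the lower bound~$\lambda$ of~$-\Delta_p u$. I expect that, once~$\lambda$ and~$c_\star$ are fixed by Section~\ref{sec:prelim-bolla-plap}, a careful reading of each test--function step -- together with the universal~$C^{1,\alpha_\star}$ bound~\eqref{eq:bound-univ-gradiente} used to control~$(|\nabla u|^2+\varepsilon)^{(p-2-\beta)/2}$ from above by a universal constant whenever~$p>2$ -- produces a constant~$\mathscr{C}$ depending solely on the quantities listed in the statement.
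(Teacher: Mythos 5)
Your plan matches the paper's proof: decompose $B_1$ into a boundary annulus where~$|\nabla u|$ is universally bounded below (from the barrier estimate and the $C^{1,\alpha_\star}$ bound) and an interior ball where~$u$ is universally bounded below, so that~$\kappa f(u) \geq \underline{\kappa}\min_{[u_0,C_0]} f > 0$, and then quantify the constants in Theorems~2.2 and~2.3 of~\cite{ds-poinc} exactly as the paper does. One caveat on your parenthetical: the way the lower bound on~$-\Delta_p u$ is used is not the qualitative observation that~$|\nabla u|$ cannot vanish on a set of positive measure, but rather the quantitative pointwise inequality~$f_0 \leq C(p)\,|\nabla u|^{p-2}|D^2 u|$ a.e.\ (from the non-divergence form of~$\Delta_p u$), which is fed into the weighted second-derivative estimate via H\"older; this pointwise inequality is the step whose constant must actually be tracked.
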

	
	For the sake of completeness, we provide more details about the proof of this result in Appendix~\ref{ap:proofsum}. \newline
	
	When~$p>2$, we can choose~$p-2<(p-1)r<p-1$ and apply Theorem~\ref{th:int-grad} to get that~\eqref{eq:condizione-peso} is fulfilled with~$\Omega = B_1$,~$\gamma=0$, and for some constant~$C^\ast>0$ depending only on~$n$,~$p$,~$C_0$,~$\norma{f}_{C^{0,1}([0,C_0])}$,~$f$,~$\norma{\kappa}_{L^\infty(B_1)}$,~$\underline{\kappa}$, and~$F$. In addition, we will shortly see that the dependence on~$\underline{\kappa}$ can actually be omitted in our case.
	
	From this discussion, it follows that when~$u_i=u$ for~$i=1,2$, then~\eqref{eq:assunzione-pesi} holds and we can apply Proposition~\ref{prop:wcp}, Theorem~\ref{th:wh}, Theorem~\ref{th:localbound}, and Theorem~\ref{th:localbound-bordo}.
	
	
	\section{Proof of Theorem~\ref{th:mainth}}
	\label{sec:proof-ball}
	
	First, note that we may suppose, without loss of generality, that
	\begin{equation*}
		\label{eq:defi-small}
		\defi(\kappa) \leq \gamma
	\end{equation*}
	for some small~$\gamma \in (0,1)$ to be determined later in dependence of universal constants. Indeed, if~$\defi(\kappa) > \gamma$, thanks to~\eqref{eq:boundu} one has
	\begin{equation*}
		\begin{split}
			\abs*{u(x)-u(y)} &\leq \abs*{u(x)} + \abs*{u(y)} \leq 2C_0 \\
			&\leq \frac{2C_0}{\abs*{\log\left(C \gamma\right)}^{-\alpha}} \,\abs*{\log\left(C \defi(\kappa)\right)}^{-\alpha} \quad \mbox{for all } x,y \in B_1.
		\end{split}
	\end{equation*}
	Hence,~\eqref{eq:qsym-toprove} follows trivially.
	
	Moreover, by~\eqref{eq:mainhyp-k} and the definition of the deficit~\eqref{eq:deficit}, we have
	\begin{equation*}
		\underline{\kappa} \geq \norma{\kappa}_{L^\infty(B_1)} - \defi(\kappa) \geq \frac{1}{2} \norma{\kappa}_{L^\infty(B_1)}
	\end{equation*}
	as long as $\gamma \leq \gamma_0 \coloneqq 1/2 \,\norma{\kappa}_{L^\infty(B_1)}$. Thus,~\eqref{eq:k-below} holds true.
	
	Finally, observe that, up to a rotation, it suffices to show that
	\begin{equation}
		\label{eq:mainclaim-bolla}
		u(x',x_n)-u(x',-x_n) \leq C \, \abs*{\log\left(C \defi(\kappa)\right)}^{-\alpha} \quad \mbox{for all } x \in B_1 \mbox{ such that } x_n>0,
	\end{equation}
	for some universal constants~$C \geq 1$ and~$\alpha \in (0,1)$.
	
	\renewcommand\thesubsection{\bfseries Step \arabic{subsection}}
	
	\subsection{Starting the moving planes procedure.}
	\label{sub:step1}
	
	Let~$\lambda \in (0,1)$. With usual notation, we write
	\begin{equation*}
		\Sigma_\lambda \coloneqq \left\{x \in B_1 \,\lvert\, x_n > \lambda\right\} \quad \mbox{ and } \quad x^\lambda \coloneqq \left(x',2\lambda-x_n\right),
	\end{equation*}
	for every~$x = (x', x_n) \in B_1$ and
	\begin{equation*}
		u_\lambda(x) \coloneqq u(x^\lambda) \quad \mbox{for } x \in \Sigma_\lambda.
	\end{equation*}
	Note that~$u_\lambda$ is a weak solution to
	\begin{equation*}-\Delta_p u_\lambda = \kappa_\lambda f(u_\lambda) \quad \mbox{in } \Sigma_\lambda,
	\end{equation*}
	where~$\kappa_\lambda(x) \coloneqq \kappa(x^\lambda)$. By defining
	\begin{equation*}
		c_{\lambda} (x) \coloneqq
		\begin{dcases}
			\frac{f(u(x))-f(u(x^{\lambda}))}{u(x)-u(x^{\lambda})}	& \quad \mbox{if } u(x) \neq u(x^{\lambda}), \\
			0													& \quad \mbox{if } u(x) = u(x^{\lambda}),
		\end{dcases}
	\end{equation*}
	and
	\begin{equation*}
		g_{1,\lambda} \coloneqq \kappa f(u) \quad\mbox{and}\quad g_{2,\lambda} \coloneqq \kappa_\lambda f(u),
	\end{equation*}
	we have that
	\begin{equation}
		\label{eq:eq-for-u-ul-ball}
		\begin{split}
			-\Delta_p u - \kappa_\lambda \, c_\lambda u - g_{1,\lambda} = -\Delta_p u_\lambda - \kappa_\lambda  \, c_\lambda u_\lambda - g_{2,\lambda} \quad \mbox{in } \Sigma_{\lambda}.
		\end{split}
	\end{equation}
	Additionally,~$c_\lambda \in L^\infty(\Sigma_{\lambda})$ with~$\abs*{c_\lambda}  \leq \norma{f}_{C^{0,1}([0,C_0])}$ and~$g_{i,\lambda} \in L^\infty(\Sigma_\lambda)$ for~$i=1,2$. Hence, in view of Proposition~\ref{prop:wcp} and Remark~\ref{rem:forma-eq}, there exist a small universal~$\delta_\star \in (0,1)$ and a universal constant~$\mathscr{K} \geq 1$ such that
	\begin{equation*}
		\norma{(u-u_\lambda)_{+}}_{L^\infty(\Sigma_\lambda)} \leq \mathscr{K} \,\norma{g_{1,\lambda}-g_{2,\lambda}}_{L^\infty(\Sigma_\lambda)} \leq \mathscr{K} \,\norma{f}_{C^{0,1}([0,C_0])} \defi(\kappa),
	\end{equation*}
	provided~$\abs*{\Sigma_\lambda} \leq \delta_\star$. Moreover, note that the last condition holds, for instance, if~$\lambda \geq 1-\delta_\star \abs*{B_1'}^{-1}$.
	
	Define now~$C_3 \coloneqq \max\left\{\mathscr{K},\sqrt{\pi^3}/2\right\} \norma{f}_{C^{0,1}([0,C_0])}$ and the set
	\begin{equation*}
		\Lambda \coloneqq \left\{\lambda \in (0,1) \,\lvert\, \norma{(u-u_\mu)_{+}}_{L^\infty(\Sigma_\mu)} \leq C_3 \defi(\kappa) \mbox{ in } \Sigma_\mu \mbox{ for all } \mu \in [\lambda,1) \right\}.
	\end{equation*}
	We have just showed that~$[\lambda_0,1) \subseteq \Lambda$, for~$\lambda_0 \coloneqq \max\left\{1/2,1-\delta_\star \abs*{B_1'}^{-1}\right\}$. Thus,
	\begin{equation*}
		\lambda_\star \coloneqq \inf \Lambda
	\end{equation*}
	is a well-defined real number and~$\lambda_\star \in [0,\lambda_0]$.
	
	\subsection{Reaching the critical position.}
	
	We claim that
	\begin{equation}
		\label{eq:est-last}
		\lambda_\star \leq \lambda_{1} \coloneqq \lambda_{1}(\defi(\kappa)),
	\end{equation}
	for some~$\lambda_{1}(\defi(\kappa)) \to 0$ as~$\defi(\kappa) \to 0$ to be determined shortly.
	
	We first prove that
	\begin{equation*}
		\lambda_\star \leq \frac{1}{4}.
	\end{equation*}
	To this aim, we argue by contradiction and suppose instead that~$\lambda_\star \in \left( \frac{1}{4} , \lambda_0 \right]$.
	
	We shall exploit the lower bound~\eqref{eq:ulinearbound} to find a small region where~$u_{\lambda_\star}-u$ is positive. Next, by taking advantage of the fundamental weak Harnack inequality of Theorem~\ref{th:wh} and arguing as in~\cite{cicopepo}, we build a suitable Harnack chain to propagate this information to a larger region. Ultimately, we will derive from this a contradiction.
	
	First, observe that, by continuity, it holds~$u-u_{\lambda_\star} \leq C_3 \defi(\kappa)$ in~$\Sigma_{\lambda_\star}$. Given~$\delta \in (0,1)$, define
	\begin{equation*}
		\Sigma_{\lambda_\star,\delta} \coloneqq \left\{ x \in \Sigma_\lambda \,\lvert\, \mathrm{dist}(x,\partial\Sigma_{\lambda_\star}) > \delta \right\}.
	\end{equation*}
	Let~$d\leq 1/2$ be the inradius of~$\Sigma_{\lambda_\star}$,~$p_0 \in \Sigma_{\lambda_\star}$ be a point such that~$\mathrm{dist}(p_0,\partial\Sigma_{\lambda_\star})=d$, and~$p \in \Sigma_{\lambda_\star,\delta}$. We may assume~$\delta < d$, so that~$p_0 \in \Sigma_{\lambda_\star,\delta}$.
	
	In order to exploit Theorem~\ref{th:wh}, we first arbitrarily fix~$q>2$, depending only on~$n$ and~$p$, such that~\eqref{eq:defi-q>} holds and then choose~$2<\mathfrak{q}<2_M$, depending only on~$n$ and~$p$, such that~\eqref{eq:cond-q-0} holds. Next, recalling also the definition of~$C_3$, we observe that
	\begin{equation*}
		\norma{g_{1,\lambda}-g_{2,\lambda}}_{L^q(\Sigma_\lambda)} \leq \abs{B_1}^\frac{1}{q} \norma{f}_{C^{0,1}([0,C_0])} \defi(\kappa) \leq C_3 \defi(\kappa),
	\end{equation*}
	so that~\eqref{eq:positivita} is verified. Observe also that, in light of~\eqref{eq:eq-for-u-ul-ball}, the application of Theorem~\ref{th:wh} is justified by  Remark~\ref{rem:forma-eq}.
	
	We now fix an arbitrary~$s \in (0,1)$ and proceed as in Lemma~2.2 of~\cite{cicopepo} considering, for shortness, only the most involved case. To this end, we consider the chain of balls~$\{B_k'\}_{k=0}^N$ with radii~$r_k'$ -- constructed as in~\cite{cicopepo} -- and compare the left-hand side of~\eqref{eq:weak-harnak-fond} on two consecutive balls. For convenience, we factorize~$\mathcal{M}(R) = R^{-n/s} \, m(R)$, observe that~$m(R) \leq 1$ for~$R \leq 1$, and compute
	\begin{equation*}
		\begin{split}
			\mathcal{M}(r'_{k-1}) &\left(\int_{B'_{k-1}} \,\abs*{u_{\lambda_\star}-u}^s \, dx\right)^{\!\! \frac{1}{s}} \leq \mathfrak{C} \left(\inf_{B'_{k-1}} (u_{\lambda_\star}-u) +2C_3 \defi(\kappa) \right) \\
			&\leq \mathfrak{C} \left\{\frac{1}{\abs{B'_{k-1} \cap B'_{k}}^{1/s}} \left(\int_{B'_{k}} \,\abs*{u_{\lambda_\star}-u}^s \, dx\right)^{\!\! \frac{1}{s}} +2C_3 \defi(\kappa) \right\} \\
			&\leq \left(\frac{2^n \,\mathfrak{C}^s}{\min\left\{1,\abs{B_1}\right\}}\right)^{\!\!\frac{1}{s}} \left\{\frac{\mathcal{M}(r'_{k})}{m(r'_{k})} \left(\int_{B'_{k}} \,\abs*{u_{\lambda_\star}-u}^s \, dx\right)^{\!\! \frac{1}{s}} +2C_3 \defi(\kappa) \right\}.
		\end{split}
	\end{equation*}
	Iterating along the chain, we deduce
	\begin{align}
		\label{eq:it-wh-1}
		\mathcal{M}\left(\frac{d}{5}\right) &\left(\int_{B_\frac{d}{5}(p_0)} \,\abs*{u_{\lambda_\star}-u}^s \, dx\right)^{\!\! \frac{1}{s}} \leq \left(\frac{2^{n+1} \,\mathfrak{C}^s}{\min\left\{1,\abs{B_1}\right\}}\right)^{\!\!\frac{N}{s}} \frac{1}{m(r'_1)\dots m(r'_N)} \cdot \\
		\notag
		&\cdot \left\{\mathcal{M}(r'_{N}) \left(\int_{B'_{N}} \,\abs*{u_{\lambda_\star}-u}^s \, dx\right)^{\!\!\frac{1}{s}} +2C_3 \defi(\kappa) \right\}.
	\end{align}
	Using a similar argument, we also obtain
	\begin{equation}
		\label{eq:it-wh-2}
		\mathcal{M}(r'_{N})  \left(\int_{B'_{N}} \,\abs*{u_{\lambda_\star}-u}^s \, dx\right)^{\!\!\frac{1}{s}} \leq \frac{C}{m(\delta/5)} \left(\inf_{B_{\frac{\delta}{5}}(p)} (u_{\lambda_\star}-u) +2C_3 \defi(\kappa)\right) \!.
	\end{equation}
	for some~$C \geq 1$ depending only on~$n$,~$s$, and~$\mathfrak{C}$. As a consequence, chaining~\eqref{eq:it-wh-1} and~\eqref{eq:it-wh-2}, we conclude that
	\begin{equation*}
		\begin{split}
			\mathcal{M}\left(\frac{d}{5}\right) &\left(\int_{B_\frac{d}{5}(p_0)} \,\abs*{u_{\lambda_\star}-u}^s \, dx\right)^{\!\! \frac{1}{s}} \\
			&\leq C \left(\frac{2^{n+1} \,\mathfrak{C}^s}{\min\left\{1,\abs{B_1}\right\}}\right)^{\!\!\frac{N}{s}} \frac{1}{m(r'_1)\dots m(r'_N)m(\delta/5)} \left(\inf_{B_{\frac{\delta}{5}}(p)} (u_{\lambda_\star}-u) +2C_3 \defi(\kappa)\right) \!.
		\end{split}
	\end{equation*}
	for some~$C \geq 1$ depending only on~$n$,~$s$, and~$\mathfrak{C}$.
	
	Since we can derive an analogous estimate starting from~$B_{\delta/5}(p)$ and reaching~$B_{d/5}(p_0)$, we ultimately deduce that
	\begin{equation}
		\label{eq:wh-chainato}
		\begin{split}
			\sup_{p \in \Sigma_{\lambda_\star,\delta}} &\left(\dashint_{B_\frac{\delta}{5}(p)} \,\abs*{u_{\lambda_\star}-u}^s \, dx\right)^{\!\! \frac{1}{s}} \\
			&\leq \frac{C}{m(\delta/5)^{2N+3}} \left(\frac{2^{n+1}\,\mathfrak{C}^s}{\min\left\{1,\abs{B_1}\right\}}\right)^{\!\!\frac{2N}{s}} \left(\inf_{\Sigma_{\lambda_\star,\delta}} (u_{\lambda_\star}-u) + 2 C_3 \defi(\kappa)\right) \!,
		\end{split}
	\end{equation}
	for some universal constant~$C \geq 1$. Given that~$d \geq c_\sharp \,\mathrm{diam}(\Sigma_{\lambda_\star})$ for some~$c_\sharp \in (0,1)$, by Lemma~2.2 in~\cite{cicopepo}, we also have
	\begin{equation*}
		N \leq C \log\left(\frac{d}{\delta}\right) \leq -C \log\delta,
	\end{equation*}
	for some~$C>0$ depending only on~$c_\sharp$. Furthermore, we may assume that~$\delta \leq e^{-3}$ and deduce, from~\eqref{eq:wh-chainato}, that
	\begin{equation}
	\label{eq:wh-final}
		\sup_{p \in \Sigma_{\lambda_\star,\delta}} \left(\dashint_{B_\frac{\delta}{5}(p)} \,\abs*{u_{\lambda_\star}-u}^s \, dx\right)^{\!\!\frac{1}{s}} 
		\leq \frac{C_\flat}{\delta^{\beta}} \, m(\delta/5)^{\beta \log\delta} \left(\inf_{\Sigma_{\lambda_\star,\delta}} (u_{\lambda_\star}-u) + 2 C_3 \defi(\kappa)\right) \!,
	\end{equation}
	where~$C_\flat \geq 1$,~$\beta>0$ depend only on~$n$,~$p$,~$C_0$,~$\norma{f}_{C^{0,1}([0,C_0])}$,~$f$,~$\norma{\kappa}_{L^\infty(B_1)}$, and~$F$. Indeed, note that~$c_\sharp$ can be chosen to depend on~$\lambda_0$ and, consequently, only on universal constants.
	
	Recalling the estimate~\eqref{eq:ulinearbound}, the gradient bound~\eqref{eq:bound-univ-gradiente}, and the Dirichlet boundary datum, we observe that
	\begin{align*}
		\sup_{p \in \Sigma_{\lambda_\star,\delta}} &\left(\dashint_{B_\frac{\delta}{5}(p)} \,\abs*{u_{\lambda_\star}-u}^s \, dx\right)^{\!\!\frac{1}{s}} \geq \left(\dashint_{B_\frac{\delta}{5}\left((1-2\delta)e_n\right)} \,\abs*{u_{\lambda_\star}-u}^s \, dx\right)^{\!\!\frac{1}{s}} \geq \inf_{B_\frac{\delta}{5}\left((1-2\delta)e_n\right)} \left(u_{\lambda_\star}-u\right) \\
		&\geq \frac{1}{C_2} \min\left\{1-\abs*{2\lambda_\star-1+\frac{11}{5}\delta}, 1-\abs*{2\lambda_\star-1+\frac{9}{5}\delta}\right\} - 4 \norma{\nabla u}_{L^\infty(B_1)} \delta \\
		&\geq \frac{1-\lambda_0}{2C_2} - 4 C_1 \delta
	\end{align*}
	for every~$\delta \in \left(0,\min\left\{e^{-3},\frac{1-\lambda_0}{2}\right\}\right]$. As a result, by~\eqref{eq:wh-final}, we deduce that
	\begin{equation}
		\label{eq:inf-analog}
		\inf_{\Sigma_{\lambda_\star,\delta}} (u_{\lambda_\star}-u) \geq \frac{\delta^{\beta}}{C_\flat} \, m(\delta/5)^{-\beta \log\delta} \left( \frac{1-\lambda_0}{2C_2} - 4 C_1 \delta \right) - 2 C_3 \defi(\kappa).
	\end{equation}
	We take~$\delta \coloneqq \min\left\{e^{-3}, \frac{1-\lambda_0}{16C_1C_2}, \frac{\delta_\star}{6(n+2)}\right\}$, where~$\delta_\star>0$ is given by Proposition~\ref{prop:wcp}, and require
	\begin{equation*}
		\gamma \leq \gamma_1 \coloneqq \frac{(1-\lambda_0)\delta^{\beta}}{16C_2C_3C_\flat} \, m(\delta/5)^{-\beta \log\delta},
	\end{equation*}
	so that
	\begin{equation*}
		u_{\lambda_\star}-u \geq \frac{(1-\lambda_0)\delta^{\beta}}{8C_2C_\flat} \, m(\delta/5)^{-\beta \log\delta} \quad \mbox{in } \Sigma_{\lambda_\star,\delta}.
	\end{equation*}
	Therefore, we conclude that
	\begin{equation}
		\label{eq:posit-Slambdel}
		\begin{split}
			\left(u_{\lambda_\star-\epsilon}-u\right)(x) &\geq \left(u_{\lambda_\star}-u\right)(x) -2\norma{\nabla u}_{L^\infty(B_1)}  \epsilon \\
			&\geq  \frac{(1-\lambda_0)\delta^{\beta}}{8C_2C_\flat} \, m(\delta/5)^{-\beta \log\delta} -2C_1 \epsilon \geq 0 \quad \mbox{for all } x \in \Sigma_{\lambda_\star,\delta},
		\end{split}
	\end{equation}
	provided that~$\epsilon>0$ is sufficiently small.
	
	Define now the open set~$\Sigma' \coloneqq \Sigma_{\lambda_\star-\epsilon} \setminus \overline{\Sigma_{\lambda_\star,\delta}}$ and note that~$u \leq u_{\lambda_\star-\epsilon}$ on~$\partial\Sigma'$. Precisely as in~\ref{sub:step1}, we deduce that
	\begin{equation}
		\label{eq:est-Sigma'}
		\norma{(u-u_{\lambda_\star-\epsilon})_{+}}_{L^\infty(\Sigma')} \leq C_3 \defi(\kappa),
	\end{equation}
	provided~$\abs*{\Sigma'} \leq \delta_\star$. By noticing that
	\begin{equation*}
		\Sigma' \subseteq \left(\Sigma_{\lambda_\star-\epsilon} \setminus \Sigma_{\lambda_\star+\delta}\right) \cup \left((B_1 \setminus B_{1-\delta}) \cap \Sigma_{\lambda_\star+\delta} \right) \!,
	\end{equation*}
	recalling the definition of~$\delta$, and provided~$\epsilon \leq \delta$, we have
	\begin{equation*}
		\abs*{\Sigma'} \leq \abs*{B_1'}(\epsilon+\delta) + \abs*{B_1 \setminus B_{1-\delta}} \leq 6(n+2)\delta \leq \delta_\star.
	\end{equation*}
	Hence,~\eqref{eq:posit-Slambdel}-\eqref{eq:est-Sigma'} and the continuity of~$u-u_{\lambda_\star-\epsilon}$ lead to a contradiction to the definition of~$\lambda_\star$. \newline
	
	We can now push~$\lambda_\star$ closer to the origin and prove the validity of~\eqref{eq:est-last}. By contradiction, assume that
	\begin{equation*}
		\lambda_\star \in \left(\lambda_{1}(\defi(\kappa)), \frac{1}{4} \right] \!.
	\end{equation*}
	Observe that this is possible for~$\gamma$ sufficiently small, as we will specify shortly. Arguing as in~\eqref{eq:inf-analog}, we obtain
	\begin{equation*}
		\begin{split}
			\inf_{\Sigma_{\lambda_\star,\delta}} (u_{\lambda_\star}-u) &\geq \frac{\delta^{\beta}}{C_\flat} \, m(\delta/5)^{-\beta \log\delta} \left( \frac{\lambda_\star}{2C_2} - 4 C_1 \delta \right) - 2 C_3 \defi(\kappa) \\
			&\geq \frac{\delta^{\beta}}{4C_1C_2C_\flat} \, m(\delta/5)^{-\beta \log\delta} \lambda_\star - 2 C_3 \defi(\kappa).
		\end{split}
	\end{equation*}
	Note that we require~$\delta \leq \min\left\{e^{-3},  \frac{\lambda_\star}{16C_1C_2}, \frac{\delta_\star}{6(n+2)}\right\}$. Then, a straightforward computation reveals that
	\begin{equation}
		\label{eq:inf-toest}
		\inf_{\Sigma_{\lambda_\star,\delta}} (u_{\lambda_\star}-u) \geq \frac{\delta^{\beta}}{4C_1C_2C_\flat} e^{-\frac{C_4}{\delta^\tau}} \lambda_\star - 2 C_3 \defi(\kappa)
	\end{equation}
	for some universal~$C_4 \geq 1$ and some~$\tau \geq 2$ depending only on~$n$ and~$p$. We now set~$\delta \coloneqq \frac{\lambda_{1}}{C_5} \in \left(0,\frac{1}{8}\right]$, where~$C_5 \coloneqq \max\left\{4C_1C_2, \frac{3(n+2)}{2\delta_\star}, e^3\right\} \geq 4$, ensuring that all the previous requirements on~$\delta$ are satisfied. Substituting this definition in~\eqref{eq:inf-toest} and noting that there exists a universal~$c_\ast>0$ such that
	\begin{equation*}
		t^{\beta+1} \geq e^{- C_4 C_5^\tau t^{-\tau}} \quad\mbox{for }t \leq c_\ast,
	\end{equation*}
	we obtain
	\begin{equation}
		\label{eq:for-remark}
		\begin{split}
			\inf_{\Sigma_{\lambda_\star,\delta}} (u_{\lambda_\star}-u) &\geq \frac{\lambda_{1}^{\beta+1}}{4C_1 C_2 C_5^\beta C_\flat} e^{- C_4 C_5^\tau \lambda_{1}^{-\tau}} - 2 C_3 \defi(\kappa) \\
			&\geq \frac{1}{4C_1 C_2 C_5^\beta C_\flat} e^{- 2 C_4 C_5^\tau \lambda_{1}^{-\tau}} - 2 C_3 \defi(\kappa) = C_3 \defi(\kappa).
		\end{split}
	\end{equation}
	The last identity follows by choosing
	\begin{equation}
		\label{eq:l1-def}
		\lambda_{1}(\defi(\kappa)) \coloneqq (2C_4)^{1/\tau}C_5 \,  \abs*{\log\left(12 C_1 C_2 C_3 C_5^\beta C_\flat \defi(\kappa)\right)}^{-1/\tau} \!.
	\end{equation}
	Consequently,~$\lambda_{1}(\defi(\kappa)) \to 0$ as~$\defi(\kappa) \to 0$. Moreover, we have~$\lambda_{1}(\defi(\kappa)) < \min\{1/4,c_\ast\}$, provided that we restrict ourselves to
	\begin{equation*}
		\gamma \leq \gamma_2 \coloneqq \frac{1}{24 C_1 C_2 C_3 C_5^\beta C_\flat} \exp\left\{ -\frac{2C_4C_5^\tau}{\min\{4^{-1},c_\ast\}^\tau} \right\}.
	\end{equation*}
	For~$\epsilon>0$ sufficiently small, we deduce that~$u_{\lambda_\star-\epsilon}-u \geq 0$ in~$\Sigma_{\lambda_\star,\delta}$. As before, Proposition~\ref{prop:wcp} ensures that~\eqref{eq:est-Sigma'} is verified thanks to the definitions of~$\delta$ and~$C_5$. Hence, we get a contradiction to the definition of~$\lambda_\star$ and~\eqref{eq:est-last} holds true.
	
	\begin{remark}
		\label{rem:wh-logtype}
		From~\eqref{eq:inf-toest} and~\eqref{eq:for-remark}, which follow from the weak Harnack inequality, it is clear that we expect a logarithmic-type behavior for~$\lambda_{1}$, as any power-like behavior would not suffice to establish positivity of the left-hand side of~\eqref{eq:for-remark}.
	\end{remark}
	
	\subsection{Almost radial symmetry in one direction.}
	
	Up to now, we have proved that
	\begin{equation*}
		u(x',x_n)-u(x',2\lambda-x_n) \leq C_3 \defi(\kappa) \quad \mbox{for all } (x',x_n) \in \Sigma_\lambda \mbox{ and } \lambda \in [\lambda_1,1),
	\end{equation*}
	with $\lambda_1$ given by~\eqref{eq:l1-def}. By taking~$\lambda=\lambda_1$ and exploiting the bound~\eqref{eq:bound-univ-gradiente}, we deduce
	\begin{equation*}
		\begin{split}
			u(x',x_n)-u(x',-x_n) &\leq u(x',x_n)-u(x',2\lambda_1-x_n) + \abs*{u(x',2\lambda_1-x_n)-u(x',-x_n)} \\
			&\leq C_3 \defi(\kappa) + 2 \norma{\nabla u}_{L^\infty(B_1)} \,\lambda_{1} \leq C_6 \lambda_{1} \quad \mbox{for all } (x',x_n) \in \Sigma_{\lambda_1},
		\end{split}
	\end{equation*}
	with~$C_6 \coloneqq 2C_1 + C_3$. In addition, for~$(x',x_n) \in \Sigma_0 \setminus \Sigma_{\lambda_1}$, we have
	\begin{equation*}
		\begin{split}
			u(x',x_n)-u(x',-x_n) &\leq \abs*{u(x',x_n)-u(x',0)} + \abs*{u(x',0)-u(x',-x_n)} \\
			&\leq 2 \norma{\nabla u}_{L^\infty(B_1)} \, x_n \leq 2C_1 \lambda_{1} \leq C_6 \lambda_{1}.
		\end{split}
	\end{equation*}
	The last two inequalities establish the validity of~\eqref{eq:mainclaim-bolla}.
	
	
	\section{Preliminary results -- The case of the space}
	
	In this section and in Section~\ref{sec:proof-space} below, we will refer to a constant as \textit{universal} if it depends only on~~$n$,~$p$,~$C_0$, $c_1$,~$\norma{\kappa}_{L^\infty(\R^n)}$,~$\underline{\kappa}$, and~$R_0$.
	
	\renewcommand\thesubsection{\arabic{section}.\arabic{subsection}}
	
	\subsection{A priori estimates}	
	
	First, by Theorem~1 in~\cite{diben}, for a weak solution of~\eqref{eq:mainprob-crit} there exists a constant~$C_1 \geq 1$, depending only on~$n$,~$p$, and~$C_0$, such that
	\begin{equation}
		\label{eq:C1boundonu}
		\norma*{u}_{C^1(\R^n)} \leq C_1.
	\end{equation}
	Moreover, arguing as in Theorem~1.1 of~\cite{vet}, one can prove, up to possibly enlarging~$C_1$, that
	\begin{equation}
		\label{eq:bound-gradu-plap}
		\abs*{\nabla u(x)} \leq \frac{C_1}{1+\abs*{x}^\frac{n-1}{p-1}} \quad \mbox{for all } x \in \R^n.
	\end{equation}
	
	Secondly, we show that any non-trivial solution to~\eqref{eq:mainprob-crit} has strictly positive mass. The following result is essentially Lemma~2.3 in~\cite{vet}.
	
	\begin{lemma}
		Suppose that~$1<p<n$ and~$u \in \mathcal{D}^{1,p}(\R^n)$ is a non-negative, non-trivial weak solution to~\eqref{eq:mainprob-crit}. Then, there exists a constant~$\mathcal{M}>0$, depending only on~$n$,~$p$, and~$\norma{\kappa}_{L^\infty(\R^n)}$, such that
		\begin{equation}
			\label{eq:boundmassa}
			\norma*{u}_{L^{\past}\!(\R^n)} \geq \mathcal{M}.
		\end{equation}
	\end{lemma}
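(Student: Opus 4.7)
The plan is to use~$u$ itself as a test function in the weak formulation of~\eqref{eq:mainprob-crit}, combine the resulting identity with the Sobolev inequality, and then exploit the strict inequality~$\past>p$ (valid for~$1<p<n$) to extract a lower bound for~$\norma{u}_{L^{\past}\!(\R^n)}$. Since~$u \in \mathcal{D}^{1,p}(\R^n)$ and~$u \geq 0$, testing is admissible after a standard truncation-approximation argument (multiplying~$u$ by a smooth cut-off~$\xi_R$ supported in~$B_R$ with~$\xi_R \to 1$ and~$\abs{\nabla \xi_R}$ controlled, and passing to the limit using dominated convergence together with~$\nabla u \in L^p(\R^n)$ and~$u \in L^{\past}(\R^n)$).

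Once testing is justified, the weak formulation yields
\begin{equation*}
\int_{\R^n} \abs{\nabla u}^p \, dx = \int_{\R^n} \kappa(x) \, u^{\past} \, dx \leq \norma{\kappa}_{L^\infty(\R^n)} \int_{\R^n} u^{\past} \, dx.
\end{equation*}
The Sobolev inequality~$S\norma{u}_{L^{\past}\!(\R^n)} \leq \norma{\nabla u}_{L^p(\R^n)}$ then gives
\begin{equation*}
S^{p} \,\norma{u}_{L^{\past}\!(\R^n)}^{p} \leq \norma{\kappa}_{L^\infty(\R^n)} \,\norma{u}_{L^{\past}\!(\R^n)}^{\past}.
\end{equation*}

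Since~$u$ is non-trivial and non-negative, one has~$\norma{u}_{L^{\past}\!(\R^n)} > 0$: indeed, if~$\norma{u}_{L^{\past}\!(\R^n)}=0$, then~$u \equiv 0$ a.e., contradicting the non-triviality assumption. Dividing by~$\norma{u}_{L^{\past}\!(\R^n)}^{p}$ and using that~$\past - p = p^2/(n-p)>0$, one obtains
\begin{equation*}
\norma{u}_{L^{\past}\!(\R^n)} \geq \left(\frac{S^{p}}{\norma{\kappa}_{L^\infty(\R^n)}}\right)^{\!\! \frac{1}{\past-p}} \eqqcolon \mathcal{M},
\end{equation*}
which depends only on~$n$,~$p$ (through~$S$ and~$\past$), and~$\norma{\kappa}_{L^\infty(\R^n)}$, as desired.

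There is no genuine obstacle here: the only technical point is the admissibility of~$u$ as a test function in the whole space, which is routine for energy solutions via a compactly supported cut-off and a passage to the limit relying on~$\nabla u \in L^p(\R^n)$ and~$u \in L^{\past}\!(\R^n)$.
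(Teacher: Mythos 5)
Your proof is correct and follows essentially the same route as the paper: test the equation with~$u$ itself, invoke the Sobolev inequality, and divide out a positive power of~$\norma{u}_{L^{\past}\!(\R^n)}$ using~$\past>p$. The only difference is cosmetic — you raise the Sobolev inequality to the power~$p$, whereas the paper raises it to~$\past$ — and your explicit mention of the cut-off argument needed to justify testing with~$u$ is a detail the paper leaves implicit.
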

	\begin{proof}
		Since~$u$ is non-trivial, by Sobolev inequality, we get
		\begin{equation*}
			\begin{split}
				0 &< \int_{\R^n} u^{\past} dx \leq S^{-\past} \left(\int_{\R^n} \,\abs*{\nabla u}^p \, dx\right)^{\!\!\frac{\past}{p}} = S^{-\past} \left(\int_{\R^n} \kappa u^{\past} dx\right)^{\!\!\frac{\past}{p}} \\
				&\leq S^{-\past} \norma{\kappa}_{L^\infty(\R^n)}^{\past/p} \left(\int_{\R^n} u^{\past} dx\right)^{\!\!\frac{\past}{p}}\!.
			\end{split}
		\end{equation*}
		The conclusion immediately follows since~$\past>p$.
	\end{proof}
	
	Finally, we shall prove that any solution to~\eqref{eq:mainprob-crit} enjoys a universal pointwise lower bound. We remark that, under more general assumptions than ours, such a bound for non-negative solutions to~\eqref{eq:mainprob-crit} has been already established by V\'etois in Theorem~1.1 of~\cite{vet}. Here, we provide a proof that fits our assumptions and relies only elementary tools.
	
	\begin{lemma}
		Suppose that~$1<p<n$ and~$u \in \mathcal{D}^{1,p}(\R^n)$ is a non-negative, non-trivial weak solution to~\eqref{eq:mainprob-crit} satisfying~\eqref{eq:decadimento}. Then, there exists a constant~$c_0>0$, depending only on~$n$,~$p$,~$C_0$, and~$\norma{\kappa}_{L^\infty(\R^n)}$, such that
		\begin{equation}
			\label{eq:bb-u}
			u(x) \geq \frac{c_0}{1+\abs{x}^\frac{n-p}{p-1}} \quad \mbox{for every } x \in \R^n.
		\end{equation}
	\end{lemma}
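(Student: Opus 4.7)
The plan is to combine the strictly positive mass estimate~\eqref{eq:boundmassa} with the universal pointwise decay~\eqref{eq:decadimento} to localize the mass of $u$ in a universal ball, then apply Serrin's Harnack inequality to obtain a uniform positive lower bound on that ball. Finally, I would use a comparison argument against a radial $p$-harmonic barrier (which has precisely the decay rate $|x|^{-(n-p)/(p-1)}$) to propagate the lower bound to all of $\R^n$.

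For the first step, since $p^{\ast}\frac{n-p}{p-1}=\frac{np}{p-1}>n$, integrating~\eqref{eq:decadimento} in polar coordinates yields a universal function $\omega(R)\to 0$ as $R\to\infty$ such that $\int_{|x|>R}u^{p^{\ast}}\,dx\leq\omega(R)$. Combining with~\eqref{eq:boundmassa}, one picks $R_{\ast}>0$, depending only on $n$, $p$, $C_0$, and $\norma{\kappa}_{L^{\infty}(\R^n)}$, so that $\int_{B_{R_{\ast}}}u^{p^{\ast}}\,dx\geq \mathcal{M}^{p^{\ast}}/2$. The bound $u\leq C_0$ on $B_{R_{\ast}}$ forces $\sup_{B_{R_{\ast}}}u\geq c_{\ast}\coloneqq (\mathcal{M}^{p^{\ast}}/(2|B_{R_{\ast}}|))^{1/p^{\ast}}>0$. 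Next I rewrite~\eqref{eq:mainprob-crit} as $-\Delta_p u=b(x)u^{p-1}$ with $b(x)\coloneqq\kappa(x)u(x)^{p^{\ast}-p}$, which is bounded on $B_{2R_{\ast}}$ by $\norma{\kappa}_{L^{\infty}(\R^n)}C_0^{p^{\ast}-p}$ in view of~\eqref{eq:decadimento}. Serrin's Harnack inequality (Theorems~5,~7, and~9 of~\cite{serr}), already invoked in~\eqref{eq:serrin-harnack}, applied on $B_{2R_{\ast}}$, yields a universal constant $C_{\ast}\geq 1$ with
\begin{equation*}
\sup_{B_{R_{\ast}}}u\leq C_{\ast}\inf_{B_{R_{\ast}}}u,
\end{equation*}
so that $\inf_{B_{R_{\ast}}}u\geq c_{\ast}/C_{\ast}\eqqcolon c_0'>0$.

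For the second step, I introduce the radial function $\Phi(x)\coloneqq c_0'(R_{\ast}/|x|)^{(n-p)/(p-1)}$, which a direct computation shows to be $p$-harmonic in $\R^n\setminus\{0\}$. Since $u\geq 0$ and $-\Delta_p u=\kappa u^{p^{\ast}-1}\geq 0$ in $\R^n$, the function $u$ is a $p$-supersolution in every open set. To handle the unbounded region $\R^n\setminus\overline{B_{R_{\ast}}}$, I fix $N>R_{\ast}$ and set $\Phi_N\coloneqq\Phi-c_0'(R_{\ast}/N)^{(n-p)/(p-1)}$, which is still $p$-harmonic because $\Delta_p$ is invariant under additive constants. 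On $\partial B_{R_{\ast}}$ one has $\Phi_N\leq c_0'\leq u$, while $\Phi_N=0\leq u$ on $\partial B_N$. The weak comparison principle for the $p$-Laplacian on the annulus $B_N\setminus\overline{B_{R_{\ast}}}$ (see Theorem~2.4.1 in~\cite{ps}) gives $\Phi_N\leq u$ in $B_N\setminus\overline{B_{R_{\ast}}}$; letting $N\to+\infty$ yields $\Phi\leq u$ in $\R^n\setminus B_{R_{\ast}}$. Coupling this with $u\geq c_0'$ on $B_{R_{\ast}}$, a routine choice of $c_0>0$ depending only on $c_0'$ and $R_{\ast}$ yields~\eqref{eq:bb-u}.

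The main technical point is the use of the weak comparison principle on an unbounded domain; this is handled cleanly by the truncation $\Phi_N$, exploiting the explicit form of the $p$-harmonic barrier and the nonnegativity of $u$. A minor check is that all constants ($R_{\ast}$, $c_{\ast}$, $C_{\ast}$) introduced above depend only on $n$, $p$, $C_0$, and $\norma{\kappa}_{L^{\infty}(\R^n)}$, consistent with the statement; in particular, $\underline{\kappa}$ plays no role because the arguments only require the upper bound on $\kappa$, via the estimate of $\norma{b}_{L^{\infty}(B_{2R_{\ast}})}$ and of the tail $\omega(R)$.
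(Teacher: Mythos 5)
Your argument is correct. The overall three‑part structure — localize the $L^{p^\ast}$ mass in a universal ball via~\eqref{eq:boundmassa} and~\eqref{eq:decadimento}, produce a positive infimum on that ball via Harnack, then propagate the bound by a $p$‑harmonic barrier — matches the paper's, but you implement the middle and final steps differently. For the interior lower bound, you first derive $\sup_{B_{R_\ast}} u \geq c_\ast$ from the mass concentration, rewrite~\eqref{eq:mainprob-crit} as $-\Delta_p u = b(x)u^{p-1}$ with $b=\kappa u^{p^\ast-p}\in L^\infty$, and invoke the full two‑sided Serrin Harnack inequality; this requires noting that $u\leq C_0$ from~\eqref{eq:decadimento} so that $b$ is universally bounded, and that the Harnack constant is controlled by $n$, $p$, $\norma{b}_\infty$, and the radius. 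The paper instead observes merely that $\Delta_p u \leq 0$ and applies the \emph{weak} Harnack inequality for $p$‑supersolutions (Theorem~7.1.2 in~\cite{ps}), which directly estimates $r^{-n/s}\norma{u}_{L^s(B_r)}$ by $\inf_{B_r} u$ without ever rewriting the equation or bounding a coefficient; the $L^{p^\ast}$‑to‑$L^s$ comparison $\int u^{p^\ast} \leq C_0^{p^\ast - s}\int u^s$ replaces the sup‑from‑mass step. The paper's route is slightly leaner in hypotheses, but yours is equally sound. For the barrier step, you truncate $\Phi$ to $\Phi_N$ and compare on the bounded annulus $B_N\setminus\overline{B_{R_\ast}}$ before letting $N\to\infty$, whereas the paper invokes directly the exterior‑domain comparison principle of Theorem~2.4.1 in~\cite{ps} using $\liminf_{|x|\to\infty}(u-\Psi)=0$; your truncation is a perfectly valid (and arguably more self‑contained) way to sidestep the unbounded‑domain formulation.
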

	\begin{proof}
		Let~$r >0$ to be determined shortly and define~$B_r \coloneqq B_r(0)$. Taking advantage of~\eqref{eq:decadimento}, we have
		\begin{equation*}
			\int_{\R^n \setminus B_{r}} u^{\past} dx \leq C_0^{\past} \int_{\R^n \setminus B_{r}} \,\abs*{x}^{-\frac{np}{p-1}} \, dx = \frac{(p-1)C_0^{\past} \Haus^{n- 1}(\sfera^{n - 1})}{n} \, r^{-\frac{n}{p-1}} = \frac{\mathcal{M}^{\past}}{2},
		\end{equation*}
		for 
		\begin{equation*}
			r \coloneqq \left(\frac{2(p-1)C_0^{\past} \Haus^{n- 1}(\sfera^{n - 1})}{n \mathcal{M}^{\past}}\right)^{\!\!\frac{p-1}{n}} \!,
		\end{equation*}
		which depends only on~$n$,~$p$,~$C_0$, and~$\norma{\kappa}_{L^\infty(\R^n)}$. Therefore, using~\eqref{eq:boundmassa}, we deduce that
		\begin{equation}
			\label{eq:bb-Br}
			\int_{B_{r}} u^{\past} dx \geq \frac{\mathcal{M}^{\past}}{2}.
		\end{equation}
		Since~$\Delta_p u \leq 0$ weakly in~$\R^n$, by applying Theorem~7.1.2 in~\cite{ps}, we conclude that for any~$s \in \left(0,(p-1)\past/p\right)$, there exists a constant~$\mathfrak{c}>0$, depending only on~$n$,~$p$, and~$s$, such that
		\begin{equation}
			\label{eq:wh-ubb}
			r^{-n/s} \norma*{u}_{L^s(B_{r})} \leq \mathfrak{c} \inf_{B_{r}} u.
		\end{equation}
		Now, we fix some~$s$ as above and observe that
		\begin{equation*}
			\int_{B_{r}} u^{\past} dx \leq C_0^{\past-s} \int_{B_{r}} u^{s} \, dx. 
		\end{equation*}
		Hence, the latter, together with~\eqref{eq:bb-Br},~\eqref{eq:wh-ubb}, and the definition of~$r$, implies that
		\begin{equation}
			\label{eq:bb-int-Br}
			\inf_{B_{r}} u \geq \mathfrak{c}_\sharp
		\end{equation}
		for some~$\mathfrak{c}_\sharp>0$, depending only on~$n$,~$p$,~$C_0$, and~$\norma{\kappa}_{L^\infty(\R^n)}$.
		
		Define~$\Psi(x) \coloneqq \mathfrak{c}_\sharp \, r^{\frac{n-p}{p-1}} \abs*{x}^{-\frac{n-p}{p-1}}$. Then,~$\Psi$ is a weak solution to~$\Delta_p \Psi = 0$ in~$\R^n \setminus \{0\}$, hence, we have
		\begin{equation*}
			\Delta_p u \leq 0 \leq \Delta_p \Psi \quad \mbox{in } \R^n \setminus \overline{B_{r}}.
		\end{equation*}
		Furthermore, by~\eqref{eq:bb-int-Br} and the continuity of~$u$, it follows that~$\Psi \leq u$ on~$\partial B_r$. Clearly, $\liminf_{\abs{x} \to +\infty} (u-\Psi)(x) =0$, thus the weak comparison principle of Theorem~2.4.1 in~\cite{ps} entails that
		\begin{equation*}
			\Psi \leq u \quad \mbox{in } \R^n \setminus \overline{B_{r}}.
		\end{equation*}
		This, together with~\eqref{eq:bb-int-Br}, leads to the validity of~\eqref{eq:bb-u}.
	\end{proof}
	
	\subsection{Summability property of the gradient.}
	
	In our proof, we will also need the counterpart of Theorem~\ref{th:int-grad}, stated as follows.
	
	\begin{theorem}
		\label{th:int-grad-sp}
		Let~$u\in C^1(\R^n)$ be a weak solution to~\eqref{eq:mainprob-crit}. Assume that~\eqref{eq:k-below-sp},~\eqref{eq:decadimento}, and~\eqref{eq:bb-grad} are in force. Then, for every~$r \in (0,1)$ and~$R\geq R_0$, we have
		\begin{equation*}
			\label{eq:integrabilita-spazio}
			\int_{B_{\! R}}  \frac{1}{\abs*{\nabla u}^{(p-1)r}} \, dx \leq \mathscr{C}_{\! R}
		\end{equation*}
		for some~$\mathscr{C}_R>0$ depending only on~$n$,~$p$,~$r$,~$C_0$,~$c_1$,~$\norma{\kappa}_{L^\infty(\R^n)}$,~$\underline{\kappa}$,~$R_0$, and~$R$.
	\end{theorem}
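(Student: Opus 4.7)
The plan is to decompose the integral over $B_R$ into the contributions from the exterior annulus $B_R \setminus \overline{B_{R_0}}$, where the pointwise lower bound~\eqref{eq:bb-grad} on~$|\nabla u|$ makes the estimate essentially explicit, and the inner ball $B_{R_0}$, where I will invoke the interior Damascelli--Sciunzi summability argument already used for Theorem~\ref{th:int-grad}. For the exterior piece, the hypothesis~\eqref{eq:bb-grad} yields
\[
\frac{1}{|\nabla u(x)|^{(p-1)r}} \;\leq\; c_1^{-(p-1)r}\, |x|^{(n-1)r} \qquad\text{for a.e.\ } |x| \geq R_0,
\]
and since $(n-1)r \in (0,n-1)$ for $r \in (0,1)$, integration in spherical coordinates bounds $\int_{B_R \setminus B_{R_0}} |\nabla u|^{-(p-1)r}\,dx$ by an explicit expression in $n$, $p$, $r$, $c_1$, and $R$, e.g.\ by $c_1^{-(p-1)r}\,\Haus^{n-1}(\sfera^{n-1})\, R^{(n-1)r+n}/\bigl((n-1)r+n\bigr)$. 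Nothing further is needed for this part.

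The core of the proof is the bound on $\int_{B_{R_0}} |\nabla u|^{-(p-1)r}\,dx$. The plan is to reduce to the hypotheses of Theorem~\ref{th:int-grad} by working on the slightly enlarged ball $B_{2R_0}$: on it the a priori bounds~\eqref{eq:C1boundonu}--\eqref{eq:bound-gradu-plap} provide $\norma{u}_{C^1(B_{2R_0})} \leq C_1$, while~\eqref{eq:bb-u} provides the universal positive lower bound $u \geq c_0/\bigl(1+(2R_0)^{(n-p)/(p-1)}\bigr)$ in $B_{2R_0}$. Since $u$ solves $-\Delta_p u = \kappa(x)\,u^{\past-1}$ with $\kappa \in L^\infty(\R^n)$ and $u$ universally bounded, the right-hand side belongs to $L^\infty(B_{2R_0})$ with norm controlled by $\norma{\kappa}_{L^\infty(\R^n)}$ and $C_0$. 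These three ingredients -- a two-sided positive bound on $u$, a $C^1$ bound on $u$, and an $L^\infty$ bound on the source term, valid on a slightly larger concentric ball -- are exactly what drives the proof of Theorem~\ref{th:int-grad} as spelled out in Appendix~\ref{ap:proofsum}. Applying that argument verbatim on the pair $(B_{R_0}, B_{2R_0})$ in place of $(B_{1-\delta_1}, B_1)$ will yield an estimate of the form $\int_{B_{R_0}} |\nabla u|^{-(p-1)r}\,dx \leq C$ with $C$ depending only on~$n$,~$p$,~$r$,~$C_0$,~$c_0$,~$\norma{\kappa}_{L^\infty(\R^n)}$,~$\underline{\kappa}$, and~$R_0$.

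The main obstacle I foresee is verifying that the Damascelli--Sciunzi summability argument is genuinely \emph{interior} in character: that no global feature of the ball setting (such as the Dirichlet boundary datum, Harnack chains reaching $\partial B_1$, or the linear-in-distance bound~\eqref{eq:ulinearbound}) is implicitly used. Since the argument is based on testing an appropriate regularization of the linearized equation against a function of the form $\eta^2\,|\nabla u|^{-(p-1)r}$ with $\eta \in C^1_c$ supported in the enlarged ball and identically~$1$ on the target ball, the resulting integral inequality involves only $\norma{\nabla \eta}_{L^\infty}$, $\norma{u}_{L^\infty}$, $\inf u$, and the $L^\infty$-norm of the source on $\mathrm{supp}\,\eta$. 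All four quantities are controlled by universal constants and~$R_0$ in our setting, so the transfer is direct. Combining the interior bound with the explicit outer estimate yields~$\mathscr{C}_R$ with the claimed dependence, completing the proof.
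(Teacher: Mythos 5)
Your proposal is correct and follows essentially the same route as the paper's proof: decompose the domain into an outer annulus, where the pointwise lower bound~\eqref{eq:bb-grad} on $|\nabla u|$ makes the weighted integral elementary, and the inner ball $B_{R_0}$, where the uniform lower bound $u\ge c_0/(1+R_0^{(n-p)/(p-1)})$ from~\eqref{eq:bb-u} forces $\kappa u^{\past-1}$ to be uniformly positive so that the interior Damascelli--Sciunzi summability argument of Theorem~\ref{th:int-grad} applies verbatim on a concentric pair of balls. The only cosmetic difference is that you name the enlarged ball explicitly as $B_{2R_0}$, whereas the paper works with the nested pair $B_{R_0}\Subset B_{R+1}$ and leaves the nesting implicit when invoking the interior argument.
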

	
	In Appendix~\ref{ap:proofsum}, we point out how to modify the proof of Theorem~\ref{th:int-grad} in order to prove the last result.
	
	
	\section{Proof of Theorem~\ref{th:plap-spazio}}
	\label{sec:proof-space}
	
	The argument follows the structure of the proof of Theorem~1.1 in~\cite{ccg}. The differences are mainly technical, with some already appearing in the proof of the symmetry result in~\cite{sciu}, while others arise in~\ref{step:int2point-plap} and~\ref{step:prop-posi-plap} below. For these reasons, some details are omitted or abbreviated compared to the corresponding proof in~\cite{ccg}.
	
	
	\renewcommand\thesubsection{\bfseries Step \arabic{subsection}}
	
	\subsection{Preliminary observations.}
	
	We start by noticing that, by~\eqref{eq:k-below-sp}, the assumption~$\kappa \in L^\infty(\R^n)$, the non-triviality, and non-negativity of~$u$ the reference constant~$\kappa_0$ given in~\eqref{eq:kappa0-def} is well-defined and
	\begin{equation*}
		\underline{\kappa} \leq \kappa_0 \leq \norma{\kappa}_{L^\infty(\R^n)}.
	\end{equation*}
	
	Next, we observe that it suffices to prove Theorem~\ref{th:plap-spazio} when~$\defi(u,\kappa)$ is smaller than a universal constant~$\gamma \in \left( 0, \frac{1}{2} \right]$. Indeed, if~$\defi(u,\kappa) > \gamma$, by~\eqref{eq:C1boundonu} and~\eqref{eq:bound-gradu-plap}, we have
	\begin{equation*}
		\abs*{u(x)-u(y)} \leq \abs*{u(x)} + \abs*{u(y)} \leq 2C_1 \leq \frac{2C_1}{\abs{\log \gamma}^{-\vartheta}} \,\abs{\log\defi(u,\kappa)}^{-\vartheta}
	\end{equation*}
	for every~$x,y \in \R^n$
	and
	\begin{equation*}
		\norma*{u-u_\Theta}^p_{\mathcal{D}^{1,p}(\R^n)} \leq 2^p C_1^p \int_{\R^n} \left(1 + \abs*{x}^{\frac{n-1}{p-1}}\right)^{\!\!-p} dx \leq \frac{2^p C_1^p C}{\abs{\log \gamma}^{-\vartheta}} \,\abs{\log\defi(u,\kappa)}^{-\vartheta}.
	\end{equation*}
	Therefore, both inequalities stated in Theorem~\ref{th:plap-spazio} hold trivially. Hence, we assume that
	\begin{equation*}
		\defi(u,\kappa) \leq \gamma,
	\end{equation*}
	for some small~$\gamma \in \left( 0, \frac{1}{2} \right]$ to be determined later in terms of universal quantities.
	
	In the remainder of the proof we establish the validity of the quasi-symmetry statements of Theorem~\ref{th:plap-spazio}. To this end, given a direction~$\omega \in \sfera^{n-1}$ and~$\lambda \in \R$, we define the sets
	\begin{equation*}
		\Sigma_{\omega, \lambda} \coloneqq \Big\{ {x \in \R^n \, \big\lvert \, \left\langle \omega , x \right\rangle > \lambda} \Big\}, \quad
		T_{\omega, \lambda} \coloneqq \partial \Sigma_{\omega, \lambda} = \Big\{ {x \in \R^n \, \big\lvert \, \left\langle \omega , x \right\rangle = \lambda} \Big\},
	\end{equation*}
	and denote the reflection of a point~$x \in \R^n$ across the hyperplane~$T_{\omega, \lambda}$ by
	\begin{equation*}
		x^{\omega, \lambda} \coloneqq x + 2 \left(\lambda - \left\langle \omega , x \right\rangle\right) \omega.
	\end{equation*}
	Finally, we define
	\begin{equation*}
		u_{\omega, \lambda}(x) \coloneqq u(x^{\omega, \lambda}) \quad \mbox{for } x \in \R^n.
	\end{equation*}
	Our goal in Steps~2-6 is to show that for every~$\omega \in \sfera^{n - 1}$, there is a number~$\lambda_\star = \lambda_\star(\omega) \in \R$ for which
	\begin{equation} \label{eq:mainMPclaim-plap}
		\begin{gathered}
			u(x) - u_{\omega, \lambda}(x) \le C_\sharp \abs*{\log \defi(u,\kappa)}^{-\alpha_\sharp} \quad \mbox{for every } x \in \Sigma_{\omega, \lambda} \mbox{ and } \lambda \ge \lambda_\star, \\
			\norma*{u - u_{\omega, \lambda_\star}}_{L^\infty(\R^n)} + \norma*{\nabla (u - u_{\omega, \lambda_\star})}_{L^p(\R^n)} \le C_\sharp \abs*{\log \defi(u,\kappa)}^{- \alpha_\star},
		\end{gathered}
	\end{equation}
	for some universal constants~$C_\sharp \ge 1$,~$ 0 < \alpha_\star \leq \alpha_\sharp$, and provided that~$\gamma$ is smaller than a universal constant~$\gamma_{\sharp} \in \left( 0, \frac{1}{2} \right]$.
	
	
	\subsection{Starting the moving planes procedure.}
	\label{sub:Mpstarts-plap}
	
	After a rotation, it suffices to verify~\eqref{eq:mainMPclaim-plap} for~$\omega = e_n$. Under this assumption, we drop any reference to the dependence on~$\omega$ and simply write
	\begin{gather*}
		\Sigma_\lambda \coloneqq \Big\{ {x \in \R^n \, \big\lvert \, x_n > \lambda} \Big\}, \quad
		T_\lambda \coloneqq \Big\{ {x \in \R^n \, \big\lvert \, x_n = \lambda} \Big\}, \\
		x^\lambda \coloneqq (x', 2 \lambda - x_n), \quad \mbox{and} \quad u_\lambda(x) \coloneqq u(x^\lambda),
	\end{gather*}
	for every~$x = (x', x_n) \in \R^n$.
	
	We claim here that
	\begin{equation} \label{eq:MPstarts}
		\norma*{(u-u_\lambda)_+} _{L^{\past}\!(\Sigma_\lambda)} \leq \mathscr{K} \defi(u,\kappa)^{\frac{1}{p-1}} \quad \mbox{for all } \lambda \ge \lambda_1,
	\end{equation}
	for some universal constants~$\lambda_1 > 0$, depending only on~$n$,~$p$,~$C_0$,~$c_1$,~$\norma{\kappa}_{L^\infty(\R^n)}$, and~$R_0$, and~$\mathscr{K} \geq 1$, depending only on~$n$ and~$p$.
	
	In order to verify~\eqref{eq:MPstarts}, note that, without loss of generality, we may assume
	\begin{equation} \label{eq:L2gradnormnot0}
		\norma*{\nabla (u-u_\lambda)_+}_{L^p(\Sigma_\lambda)} > 0.
	\end{equation}
	Otherwise, we would have~$(u - u_\lambda)+ = 0$ in~$\Sigma_\lambda$, in which case~\eqref{eq:MPstarts} holds trivially.
	
	Assuming~\eqref{eq:L2gradnormnot0}, by a density argument, we can test equation~\eqref{eq:mainprob-crit} against the function~$\left(u-u_\lambda\right)_+ \chi_{\Sigma_\lambda}$. By doing so, we obtain
	\begin{equation*}
		\int_{\Sigma_\lambda} \abs*{\nabla u}^{p-2} \left\langle \nabla u, \nabla (u-u_\lambda)_+ \right\rangle \, dx = \int_{\Sigma_\lambda} \kappa u^{\past-1} (u-u_\lambda)_+ \, dx.
	\end{equation*}
	Since~$u_\lambda$ satisfies the same equation as~$u$ with~$\kappa$ replaced by~$\kappa_\lambda$, we also get
	\begin{equation*}
		\int_{\Sigma_\lambda} \abs*{\nabla u_\lambda}^{p-2} \left\langle \nabla u_\lambda, \nabla (u-u_\lambda)_+ \right\rangle \, dx = \int_{\Sigma_\lambda} \kappa_\lambda u^{\past-1}_\lambda (u-u_\lambda)_+ \, dx.
	\end{equation*}
	Subtracting these two identities, we deduce
	\begin{align}
		\notag
		\int_{\Sigma_\lambda} &\left\langle \abs*{\nabla u}^{p-2} \,\nabla u - \abs*{\nabla u_\lambda}^{p-2} \,\nabla u_\lambda, \nabla (u-u_\lambda)_+ \right\rangle \, dx \\
		\label{eq:test-finale}
		&= \int_{\Sigma_\lambda} \left(\kappa u^{\past-1} - \kappa_\lambda u^{\past-1}_\lambda\right) (u-u_\lambda)_+ \, dx \\
		\notag
		&= \int_{\Sigma_\lambda} \left(\kappa-\kappa_0 \right) u^{\past-1} (u-u_\lambda)_+  \, dx + \int_{\Sigma_\lambda} \left(\kappa_0 -\kappa_\lambda\right) u_\lambda^{\past-1} (u-u_\lambda)_+  \, dx \\
		\notag
		&\quad+ \kappa_0 \int_{\Sigma_\lambda} \left(u^{\past-1}-u^{\past-1}_\lambda\right) (u-u_\lambda)_+ \, dx.
	\end{align}
	We now estimate each term on the right-hand side of~\eqref{eq:test-finale}.
	
	For the first summand we use H\"older and Sobolev inequalities in order to get
	\begin{equation}
		\label{eq:est-rs1}
		\begin{split}
			\int_{\Sigma_\lambda} \left(\kappa-\kappa_0\right) u^{\past-1} (u-u_\lambda)_+ \, dx &\leq \norma*{\left(\kappa-\kappa_0\right) u^{\past-1}}_{L^{(\past)'}\!(\Sigma_\lambda)} \,\norma*{ (u-u_\lambda)_+}_{L^{\past}\!(\Sigma_\lambda)} \\
			&\leq S^{-1} \defi(u,\kappa) \,\norma*{\nabla (u-u_\lambda)_+}_{L^{p}(\Sigma_\lambda)}.
		\end{split}
	\end{equation}
	The second term can be handled analogously. As for the last one, we may assume that~$\lambda \geq 1$, apply the mean value theorem, and exploit~\eqref{eq:decadimento} to deduce
	\begin{equation*}
		\begin{split}
			\kappa_0 \int_{\Sigma_\lambda} \left(u^{\past-1}-u^{\past-1}_\lambda\right) &(u-u_\lambda)_+ \, dx \leq \left(\past-1\right) \kappa_0 \int_{\Sigma_\lambda} u^{\past-2} (u-u_\lambda)_+^2 \, dx \\
			&\leq \left(\past-1\right) \kappa_0 \, C_0^{\past-2} \int_{\Sigma_\lambda} \frac{1}{\abs*{x}^{(\past-2)\frac{n-p}{p-1}}} (u-u_\lambda)_+^2 \, dx.
		\end{split}
	\end{equation*}
	We now proceed in the estimate as in Step~1 of the proof of Theorem~3.1 in~\cite{sciu}. To this end, define
	\begin{equation*}
		s_\ast \coloneqq - \frac{n-1}{p-1} (p-2) > 2-n \quad \mbox{and} \quad \beta^\ast \coloneqq \left(\past-2\right) \frac{n-p}{p-1}+s_\ast-2 >0,
	\end{equation*}
	so that we can apply the weighted Hardy inequality of Lemma~2.3 in~\cite{dam-ram}. This only requires that the function vanishes at infinity. Indeed, by observing that~$\abs*{x} \geq \lambda$ in~$\Sigma_\lambda$ and assuming also~$\lambda \geq R_0$, we have
	\begin{align}
		\notag
		\int_{\Sigma_\lambda} \frac{1}{\abs*{x}^{(\past-2)\frac{n-p}{p-1}}} (u-u_\lambda)_+^2 \, dx &\leq \frac{1}{\lambda^{\beta^\ast}} \int_{\Sigma_\lambda} \,\abs*{x}^{s_\ast-2} (u-u_\lambda)_+^2 \, dx \\
		\label{eq:est-rs2}
		&\leq \frac{1}{\lambda^{\beta^\ast}} \left(\frac{2}{n+s_\ast-2}\right)^{\!2} \int_{\Sigma_\lambda} \,\abs*{x}^{s_\ast} \abs*{\nabla (u-u_\lambda)_+}^2 \, dx \\
		\notag
		&\leq \frac{1}{c_1^{p-2}\lambda^{\beta^\ast}} \left(\frac{2}{n+s_\ast-2}\right)^{\!2} \int_{\Sigma_\lambda} \,\abs*{\nabla u}^{p-2} \,\abs*{\nabla (u-u_\lambda)_+}^2 \, dx,
	\end{align}
	where we used~\eqref{eq:bb-grad} in the last line.
	
	Going back to~\eqref{eq:test-finale}, we exploit~\eqref{eq:disp-basso} to deduce that
	\begin{equation*}
		\begin{split}
			c(p)& \int_{\Sigma_\lambda} \left(\abs*{\nabla u}+\abs*{\nabla u_\lambda}\right)^{p-2} \abs*{\nabla (u-u_\lambda)_+}^2 \, dx \\
			&\leq \int_{\Sigma_\lambda} \left\langle \abs*{\nabla u}^{p-2} \nabla u - \abs*{\nabla u_\lambda}^{p-2} \nabla u_\lambda, \nabla (u-u_\lambda)_+ \right\rangle dx.
		\end{split}
	\end{equation*}
	From this, together with~\eqref{eq:est-rs1},~\eqref{eq:est-rs2}, and the fact that~$p>2$, we infer
	\begin{align*}
		\int_{\Sigma_\lambda} &\left(\abs*{\nabla u}+\abs*{\nabla u_\lambda}\right)^{p-2} \abs*{\nabla (u-u_\lambda)_+}^2 \, dx \\
		&\leq \frac{\left(\past-1\right) \norma{\kappa}_{L^\infty(\R^n)} \, C_0^{\past-2}}{c(p) c_1^{p-2}\lambda^{\beta^\ast}} \left(\frac{2}{n+s_\ast-2}\right)^{\!2} \int_{\Sigma_\lambda} \left(\abs*{\nabla u}+\abs*{\nabla u_\lambda}\right)^{p-2} \,\abs*{\nabla (u-u_\lambda)_+}^2 \, dx \\
		&\quad +2S^{-1} c(p)^{-1} \defi(u,\kappa) \,\norma*{\nabla (u-u_\lambda)_+}_{L^{p}(\Sigma_\lambda)}.
	\end{align*}
	Hence,
	\begin{multline}
		\label{eq:est-interm}
		\int_{\Sigma_\lambda} \left(\abs*{\nabla u}+\abs*{\nabla u_\lambda}\right)^{p-2} \abs*{\nabla (u-u_\lambda)_+}^2 \, dx \\
		\leq 4S^{-1} c(p)^{-1} \defi(u,\kappa) \norma*{\nabla (u-u_\lambda)_+}_{L^{p}(\Sigma_\lambda)},
	\end{multline}
	provided~$\lambda \geq \lambda_1$ with
	\begin{equation*}
		\label{eq:lambda1def}
		\lambda_1 \coloneqq \max\left\{R_0, \left(\frac{2\left(\past-1\right) \norma{\kappa}_{L^\infty(\R^n)} \, C_0^{\past-2}}{c(p) \, c_1^{p-2}} \left(\frac{2}{n+s_\ast-2}\right)^{\!2}\right)^{\! \frac{1}{\beta^\ast}}\right\}.
	\end{equation*}
	Since~$p>2$, from~\eqref{eq:est-interm}, we also obtain
	\begin{equation*}
		\norma*{\nabla (u-u_\lambda)_+}_{L^{p}(\Sigma_\lambda)}^p \leq 4S^{-1} c(p)^{-1} \defi(u,\kappa) \,\norma*{\nabla (u-u_\lambda)_+}_{L^{p}(\Sigma_\lambda)}.
	\end{equation*}
	Finally, recalling that~\eqref{eq:L2gradnormnot0} is in force, we conclude that
	\begin{equation*}
		\norma*{\nabla (u-u_\lambda)_+}_{L^{p}(\Sigma_\lambda)}^{p-1} \leq C \defi(u,\kappa),
	\end{equation*}
	for some~$C\geq 1$ depending only on~$n$ and~$p$. A further application of Sobolev inequality leads to the validity of~\eqref{eq:MPstarts}.
	
	Estimate~\eqref{eq:MPstarts} implies that the set
	\begin{equation*}
		\Lambda \coloneqq \Big\{ \lambda \in \R \, \big\lvert \, \norma*{ (u-u_\mu)_+}_{L^{\past}\!(\Sigma_\mu)} \leq \mathscr{K} \defi(u,\kappa)^{\frac{1}{p-1}} \,\, \mbox{ for every } \mu \geq \lambda \Big\}
	\end{equation*}
	is non-empty. Thus,~$\lambda_\star \coloneqq \inf\Lambda$ is well-defined and~$\lambda_\star \in [-\infty,\lambda_1]$.
	
	Arguing as at the end of Step~2 of the proof of Theorem~1 in~\cite{ccg}, by taking
	\begin{equation*}
		\gamma \le \gamma_1 \coloneqq \left(\frac{\mathcal{M}}{2^{1/\past}\mathscr{K}}\right)^{\! p-1}\!,
	\end{equation*}
	we conclude that~$\lambda_\star \geq - \lambda_2$, where
	\begin{equation*}
		\lambda_2 \coloneqq \max \left\{ \left( 2\left(p-1\right) \frac{4^{\past} C_0^{\past}\Haus^{n-1}(\sfera^{n - 1})}{\mathcal{M}^{\past}} \right)^{\!\! \frac{p-1}{n}} \!, 1 \right\}.
	\end{equation*}
	Combining this with~\eqref{eq:MPstarts}, we obtain, in particular, that
	\begin{equation}
		\label{eq:limit-lstar-plap}
		\lambda_\star \in [-\lambda_0, \lambda_0],
	\end{equation}
	with~$\lambda_0 \coloneqq \max \{ \lambda_1, \lambda_2 \}$, depending only on~$n$,~$p$,~$C_0$,~$c_1$,~$\norma{\kappa}_{L^\infty(\R^n)}$, and~$R_0$.
	
	
	\subsection{From integral to pointwise estimates.}
	\label{step:int2point-plap}
	
	Now, we show that, as long as~\eqref{eq:MPstarts} holds true, we can obtain a pointwise upper bound for~$\left(u-u_{\lambda}\right)_+$.
	
	We know that~\eqref{eq:MPstarts} holds for any~$\lambda > \lambda_\star$, by definition of~$\Lambda$. Fatou's lemma ensures that it also holds for~$\lambda = \lambda_\star$, thus
	\begin{equation}
		\label{eq:stimaperlambda}
		\norma*{(u - u_{\lambda})_+}_{L^{\past}\!(\Sigma_{\lambda})} \leq \mathscr{K} \defi(u,\kappa)^{\frac{1}{p-1}} \quad \mbox{for all } \lambda \geq \lambda_\star.
	\end{equation}
	
	Let~$\lambda \geq \lambda_\star$ be fixed. First, we shall prove the pointwise bound near infinity. Indeed, by~\eqref{eq:decadimento}, we have
	\begin{equation}
		\label{eq:bpunt-lont}
		\left(u-u_{\lambda}\right)_+(x) \leq u(x) \leq C_0 \abs{x}^\frac{p-n}{p-1} \leq C_0 \defi(u,\kappa)^{\alpha_\flat}
	\end{equation}
	for some~${\alpha_\flat}>0$ to be determined soon, provided that
	\begin{equation*}
		\abs*{x} \geq \defi(u,\kappa)^{-{\alpha_\flat} \frac{p-1}{n-p}} \eqqcolon R_1.
	\end{equation*}
	
	Secondly, exploiting Theorem~\ref{th:localbound} and Theorem~\ref{th:localbound-bordo}
	on balls with radii~$1 \leq R \leq 6$, we take care of the points where~$\abs*{x} \leq R_1$. More precisely, we choose~$R=6$ in a neighborhood of~$T_\lambda$ and~$R=1$ away from this hyperplane.
	
	Note also that~\eqref{eq:assunzione-pesi}-\eqref{eq:bound-gradiente} hold true. In addition, exploiting~\eqref{eq:bb-grad} and fixing~$r$ in terms of~$p$, we obtain
	\begin{equation*}
		\int_{B_{\! R_1+30}\setminus B_{\! R_0}} \frac{1}{\abs*{\nabla u}^{(p-1)r}} \, dx \leq C_3 \left(R_1+30\right)^{(n-1)r+n},
	\end{equation*}
	for some constant~$C_3\geq 1$, depending on~$n$,~$p$, and~$c_1$. Thus, by Theorem~\ref{th:int-grad-sp} with~$R=R_0$, we may assume
	\begin{equation}
		\label{eq:Cast-bound}
		C^\ast \leq C_3 \, R_1^{(n-1)r+n},
	\end{equation}
	up to possibly enlarging~$C_3 \geq 1$ and restricting ourselves to
	\begin{equation*}
		\gamma \leq \gamma_2 \coloneqq \min\left\{30^\frac{n-p}{{\alpha_\flat}(p-1)}, \left(\frac{C_3}{\mathscr{C}_{\! R_0}}\right)^{\! \frac{n-p}{{\alpha_\flat}(p-1)[(n-1)r+n]}}\right\}.
	\end{equation*}
	This provides an estimate for the constant appearing in~\eqref{eq:condizione-peso}.
	
	Furthermore, to apply Theorem~\ref{th:localbound} and Theorem~\ref{th:localbound-bordo}, we fix any arbitrarily large~$q$, in dependence of~$n$ and~$p$, such that~\eqref{eq:defi-q>} holds and choose any~$2<\mathfrak{q}<2_M$, depending only on~$n$ and~$p$, so that~\eqref{eq:cond-q-0} is satisfied. When~$n=3$, we also take into account the restrictions given in Remark~\ref{rem:cost-wk}. Moreover, we have
	\begin{equation*}
		\begin{split}
			-\Delta_p u + \Delta_p u_\lambda &= \kappa u^{\past-1} - \kappa_\lambda u^{\past-1}_\lambda \\
			&= \left(\kappa-\kappa_0\right) u^{\past-1} + \left(\kappa_0-\kappa_\lambda\right) u^{\past-1}_\lambda + \kappa_0 \left(u^{\past-1}-u^{\past-1}_\lambda\right) \\
			&= \left(\kappa-\kappa_0\right) u^{\past-1} + \left(\kappa_0-\kappa_\lambda\right) u^{\past-1}_\lambda + \kappa_0 \, c_\lambda \left(u-u_\lambda\right),
		\end{split}
	\end{equation*}
	where
	\begin{equation*}
	\label{eq:clambdadef}
		c_{\lambda} (x) \coloneqq
		\begin{dcases}
			\frac{u^{\past-1}(x)-u^{\past-1}(x^{\lambda})}{u(x)-u(x^{\lambda})}	& \quad \mbox{if } u(x) \neq u(x^{\lambda}), \\
			0													& \quad \mbox{if } u(x) = u(x^{\lambda}),
		\end{dcases}
	\end{equation*}
	with~$c_\lambda \in L^\infty(\Sigma_{\lambda})$ and~$0 \leq c_\lambda \leq \left(\past-1\right) C_0^{\past-2}$. Setting
	\begin{equation*}
		g_{\lambda,1} \coloneqq \left(\kappa_0-\kappa\right) u^{\past-1} \quad\mbox{and}\quad g_{\lambda,2} \coloneqq \left(\kappa_0-\kappa_\lambda\right) u^{\past-1}_\lambda,
	\end{equation*}
	this yields
	\begin{equation}
		\label{eq:eq-for-u-ul}
		\begin{split}
			-\Delta_p u - \kappa_0 \, c_\lambda u + g_{\lambda,1} = -\Delta_p u_\lambda - \kappa_0 \, c_\lambda u_\lambda + g_{\lambda,2},
		\end{split}
	\end{equation}
	where~$g_{\lambda,i} \in L^\infty(\Sigma_{\lambda}) \cap L^{(\past)'}\!(\Sigma_{\lambda})$ for~$i=1,2$. Since~$(\past)' < 2 < q <+\infty$, we also have $g_{\lambda,i} \in L^q(\Sigma_{\lambda})$ for~$i=1,2$.
	
	Consequently, considering Remark~\ref{rem:forma-eq}, using the Dirichlet datum on~$T_\lambda$, and choosing~$p_\sharp = \past$\!, we deduce that for every~$x \in \Sigma_\lambda \cap B_{\! R_1}$
	\begin{equation}
		\label{eq:bpunt-1}
		(u-u_\lambda)_+(x) \leq C \, C_\mathcal{S}^{\frac{4}{\past} \frac{\mathfrak{q}}{\mathfrak{q}-2}} \left( \norma{\left(u-u_\lambda\right)_+}_{L^{\past}\!(\Sigma_\lambda)} + \norma*{g_{\lambda,1}-g_{\lambda,2}}_{L^q\left(\Sigma_\lambda\right)} \right),
	\end{equation}
	for some~$C>0$, depending on~$n$,~$p$,~$C_0$, and~$\norma{\kappa}_{L^\infty(\R^n)}$. Recalling definition~\eqref{eq:def_cfm}, by interpolation, we compute
	\begin{equation}
		\label{eq:rhs-est}
		\norma*{g_{\lambda,i}}_{L^q\left(\Sigma_\lambda \right)} \leq \norma*{g_{\lambda,i}}_{L^{(\past)'}\left(\Sigma_\lambda\right)}^{(\past)'/q}\norma*{g_{\lambda,i}}_{L^\infty\left(\Sigma_\lambda\right)}^{1-(\past)'/q} \leq \mathsf{C} \defi(u,\kappa)^{(\past)'/q} \quad \mbox{for } i=1,2
	\end{equation}
	for some~$\mathsf{C}>0$ depending only on~$n$,~$p$, and~$\norma{\kappa}_{L^\infty(\R^n)}$. By Theorem~\ref{th:w-sobolev} and~\eqref{eq:Cast-bound}, it follows that
	\begin{equation*}
		C_\mathcal{S} \leq C R_1^{\frac{p-2}{2(p-1)r}\left[(n-1)r+n\right]}
	\end{equation*}
	for some~$C>0$ depending only on~$n$,~$p$, and~$c_1$. Note that~$C_\mathcal{S}$ refers to the Sobolev constant of the ball on which~\eqref{eq:bpunt-1} is computed, so that the constant~$C_M$ appearing in Theorem~\ref{th:w-sobolev} is also universally bounded. Thus, considering the definition of~$R_1$,~\eqref{eq:stimaperlambda}, and~\eqref{eq:bpunt-1}, we shall fix~$\alpha_\flat>0$ in such a way that
	\begin{gather*}
		h_1(\alpha_\flat) \coloneqq \frac{1}{p-1} - 2{\alpha_\flat} \frac{\mathfrak{q}}{\mathfrak{q}-2} \frac{p-2}{npr}\left[(n-1)r+n\right] > 0, \\
		h_2(\alpha_\flat) \coloneqq \frac{(\past)'}{q} - 2{\alpha_\flat} \frac{\mathfrak{q}}{\mathfrak{q}-2} \frac{p-2}{npr}\left[(n-1)r+n\right] > 0.
	\end{gather*}
	Since~$h_1(0), h_2(0) \in (0,1)$ for~$p>2$, by continuity, there exist some small values~$\alpha_\flat^i \in (0,1)$, depending only on~$n$ and~$p$, such that~$h_i(\alpha_\flat^i) \in (0,1)$ for~$i=1,2$. By~\eqref{eq:bpunt-lont} and~\eqref{eq:bpunt-1}, we conclude that
	\begin{equation}
		\label{eq:bpunt}
		\norma*{(u-u_\lambda)_+}_{L^\infty(\Sigma_\lambda)} \leq C_4 \defi(u,\kappa)^{\alpha_\sharp} \quad \mbox{for every } \lambda \geq \lambda_\star,
	\end{equation}
	where~${\alpha_\sharp} \coloneqq \min\left\{\alpha_\flat^i, h_i(\alpha_\flat^i) \,\lvert\, i=1,2 \right\} \in (0,1)$ depends only on~$n$ and~$p$, and~$C_4 \geq 2\mathsf{C}$ depends only on~$n$,~$p$,~$C_0$,~$c_1$, and~$\norma{\kappa}_{L^\infty(\R^n)}$.
	
	
	\subsection{Finding mass in a small region.}
	
	We shall prove that
	\begin{equation}
		\label{eq:u-ulambdastar-small}
		\norma*{(u-u_{\lambda_\star})_-}_{L^{\past}\!(\Sigma_{\lambda_\star})} \le C \, \abs*{\log\mathrm{def}(u,\kappa)}^{-\alpha},
	\end{equation}
	for some universal constants~$C \ge 1$ and~$\alpha > 0$. We argue by contradiction and assume that
	\begin{equation}
		\label{eq:stima-assurda}
		\norma*{(u-u_{\lambda_\star})_-}_{L^{\past}\!(\Sigma_{\lambda_\star})} > \mathcal{B} \,\abs*{\log\mathrm{def}(u,\kappa)}^{-\alpha}
	\end{equation}
	for some~$\alpha > 0$ and a sufficiently large large~$\mathcal{B} \ge 1$, to be determined shortly in terms of universal constants. As a consequence of~\eqref{eq:stima-assurda}, we will show that the function~$(u - u_{\lambda_\star})_- = (u_{\lambda_\star}-u)_+$ has positive mass in a small cube. In~\ref{step:prop-posi-plap}, we will enlarge the positivity region and, in~\ref{step:contr-plap}, w will derive a contradiction from this.
	
	As in the corresponding step of the proof of Theorem~1.1 in~\cite{ccg}, taking advantage of~\eqref{eq:decadimento}, one can easily show that, by choosing
	\begin{equation}
		\label{eq:R2def}
		\begin{split}
			R_2 &\coloneqq \left( \frac{2(p-1) C_0^{\past} \Haus^{n-1}(\sfera^{n-1})}{n \mathcal{B}^{\past}} \right)^{\!\! \frac{p-1}{n}} \abs*{\log\defi(u,\kappa)}^{\!\frac{p(p-1) \alpha}{n-p}} \\
			&= C_5 \,\mathcal{B}^{-\frac{\past(p-1)}{n}} \,\abs*{\log\defi(u,\kappa)}^{\frac{p(p-1) \alpha}{n-p}},
		\end{split}
	\end{equation}
	we obtain
	\begin{equation}
		\label{eq:massa-bolla}
		\int_{B_{2R_2}(x_\star) \cap \Sigma_{\lambda_\star}} (u_{\lambda_\star}-u)_+^{\past} \, dx \geq \frac{\mathcal{B}^{\past}}{2} \, \abs{\log\defi(u,\kappa)}^{-\past \alpha},
	\end{equation}
	where~$x_\star=\left(0',\lambda_\star\right) \in T_{\lambda_\star}$. Moreover, we have~$R_2 \ge \lambda_0 + 2$ provided that
	\begin{equation*}
		\gamma \leq \gamma_3 \coloneqq \exp\left\{-\mathcal{B}^\frac{1}{\alpha} \left(C_5^{-1}(\lambda_0 + 2)\right)^{\!\frac{n-p}{p(p-1)\alpha}} \right\}.
	\end{equation*}
	
	For~$\delta \in (0, 1)$ to be determined later, consider the slab~$\mathcal{S}_{\lambda_\star,\delta} \coloneqq \Sigma_{\lambda_\star} \setminus \Sigma_{\lambda_\star+3\sqrt{n}\delta}$. Reasoning once more as in~\cite{ccg}, and provided~$\delta$ is sufficiently small, we can exploit~\eqref{eq:C1boundonu} to obtain an estimate analogous to~\eqref{eq:massa-bolla} over~$B_{2R_2}(x_\star) \cap \Sigma_{\lambda_\star+3\sqrt{n}\delta}$. More precisely, we get that
	\begin{equation}
		\label{eq:massa-bolla-1}
		\int_{B_{2R_2}(x_\star) \cap \Sigma_{\lambda_\star+3\sqrt{n}\delta}} (u_{\lambda_\star}-u)_+^{\past} \, dx \geq \frac{\mathcal{B}^{\past}}{4} \,\abs*{\log\defi(u,\kappa)}^{-\past \alpha},
	\end{equation}
	provided
	\begin{equation}
		\label{eq:cond-delta-1}
		\delta \leq \left( \frac{ \mathcal{B}^{\past} \abs*{\log\defi(u,\kappa)}^{-\past \alpha}}{4^{n} \left( 6\sqrt{n} \, C_1 \right)^{\past} \! R_2^{n-1}} \, \right)^{\!\! \frac{1}{\past + 1}} \!.
	\end{equation}
	
	We now enclose~$B_{2R_2}(x_\star) \cap \Sigma_{\lambda_\star + 3\sqrt{n} \delta}$ within the half-cube
	\begin{equation*}
		\big( {-N_0 \delta, N_0 \delta} \big)^{n - 1} \times \big( {\lambda_\star + 3\sqrt{n} \delta, \lambda_\star + (3\sqrt{n} + N_0) \delta} \big),
	\end{equation*}
	where~$N_0 \coloneqq \left\lceil \frac{2 R_2}{\delta} \right\rceil$. Next, we further subdivide this region into the family~$\mathcal{Q}$ of contiguous, essentially disjoint closed cubes of side length~$\delta$. By using that~$\mbox{card}(\mathcal{Q}) = 2^{n - 1} N_0^n$, it follows from~\eqref{eq:R2def} and~\eqref{eq:massa-bolla-1}, that there exists a cube~$Q_\delta \in \mathcal{Q}$ satisfying
	\begin{equation}
		\label{eq:massa-cubo}
		\dashint_{Q_\delta} (u_{\lambda_\star}-u)_+^{\past} \, dx \ge \frac{\mathcal{B}^{p \cdot \past}}{2 \left(8C_5\right)^n} \,\abs{\log\defi(u,\kappa)}^{-p \cdot \past \alpha}.
	\end{equation}
	
	
	\subsection{Propagating positivity to a large region.}
	\label{step:prop-posi-plap}
	
	Now, we take advantage of  the previous step to establish the positivity of~$\left(u_{\lambda_\star}-u\right)_+$ in a universally large region.
	
	Let~$\mathsf{r} \in [\lambda_0 + 2, R_2]$ to be determined later, depending only on universal constants. By~\eqref{eq:limit-lstar-plap}, we have~$B_{\mathsf{r}}(0) \cap \Sigma_{\lambda_\star} \neq \varnothing$. Let~$\delta \in (0, 1)$ be as above and define
	\begin{equation}
	\label{eq:def-Kdelta-plap}
		K_\delta \coloneqq \overline{B_{\mathsf{r}}(0) \cap \Sigma_{\lambda_\star+3\sqrt{n}\delta}}.
	\end{equation}
	Then, the set~$K_\delta$ is a compact subset of~$\Sigma_{\lambda_\star}$, disjoint from the hyperplane~$T_{\lambda_\star}$ where the function~$u-u_{\lambda_\star}$ vanishes. Our goal is to show that~$\left(u_{\lambda_\star}-u\right)_+$ is positive in~$K_\delta$.
	
	Define now~$v \coloneqq u_{\lambda_\star}-u$. Let~$B^{(\delta)}$ be the ball of radius~$\sqrt{n}\delta/2$ such that~$Q_\delta \subseteq B^{(\delta)}$ and let~$B^\star$ a ball of the same radius chosen so that~$\min_{K_\delta} v = \inf_{B^\star} v$ and~$5B^\star \subseteq \Sigma_{\lambda_\star}$ -- here,~$5B^\star$ denotes the ball concentric with~$B^\star$ and with five times the radius. Then, let~$B^{(1)}$ a unit-radius ball whose center is at distance at most~$5$ from that of~$B^{(\delta)}$ and such that~$5B^{(1)} \subseteq \Sigma_{\lambda_\star}$. Given~$N \in \N$, define~$B^{(N)}$ analogously in relation to~$B^\star$. We construct a suitable Harnack chain to connect these balls. This step is technically more involved than the corresponding one in~\cite{ccg}, due to the explicit upper bound for the constant in the weighted Sobolev inequality of Theorem~\ref{th:w-sobolev}, as given in Theorem~\ref{th:wh}.
	
	We begin by noting that~$u$ and~$u_{\lambda_\star}$ satisfy~\eqref{eq:eq-for-u-ul}. Moreover, from~\eqref{eq:bpunt}, we have~$v+C_4 \defi(u,\kappa)^{\alpha_\sharp} \geq 0$ in~$\Sigma_{\lambda_\star}$ and, by~\eqref{eq:rhs-est}, also
	\begin{equation*}
		\norma*{g_{\lambda,1}-g_{\lambda,2}}_{L^q\left(\Sigma_{\lambda_\star}\right)}\leq 2 \mathsf{C} \defi(u,\kappa)^{(\past)'/q},
	\end{equation*}
	where~$q$ and~$\mathfrak{q}$ have been fixed in~\ref{step:int2point-plap}. Finally, since~$\alpha_\sharp \leq (\past)'/q$, we can express~$C_4 \defi(u,\kappa)^{\alpha_\sharp} = 2\mathsf{C} \mathfrak{c} \defi(u,\kappa)^{(\past)'/q}$ for some~$\mathfrak{c} \geq 1$, allowing us to apply Theorem~\ref{th:wh}.
	
	First, we connect~$B^\star$ and~$B^{(1)}$ through a chain constructed as in Lemma~2.2 of~\cite{cicopepo}. By Theorem~\ref{th:int-grad-sp} with~$R=\mathsf{r}+10$, we can control~$C_\mathcal{S}$ from above along this chain in terms of universal quantities. Therefore, by assuming
	\begin{equation}
		\label{eq:cond-m}
		m\left(\frac{\sqrt{n}\delta}{2}\right) \leq 1,
	\end{equation}
	we get
	\begin{equation*}
		\mathcal{M}(1) \,\norma*{v}_{L^s(B^{(N)})} \leq
		\frac{C^{N_1/s}}{m(\sqrt{n}\delta/2)^{N_1+1}} \left( \min_{K_\delta} v + 2 C_4 \defi(u,\kappa)^{\alpha_\sharp} \right)
	\end{equation*}
	for some~$C \geq 1$ depending on universal constants, some small~$s \in (0,1)$, and with
	\begin{equation*}
		N_1 \leq C \log\left(\frac{5}{\delta}\right),
	\end{equation*}
	with~$C \geq 1$ depending on universal constants. As a result, we conclude that
	\begin{equation}
		\label{eq:wh1}
		\norma*{v}_{L^s(B^{(N)})} \leq
		\frac{C_\flat}{\delta^\beta \, m(\sqrt{n}\delta/2)^{\beta \log\left(5/\delta\right)}} \left( \min_{K_\delta} v + 2 C_4 \defi(u,\kappa)^{\alpha_\sharp} \right) \!,
	\end{equation}
	for some universal~$C_\flat \geq 1$ and~$\beta>0$.
	
	Next, we connect~$B^{(1)}$ and~$B^{(N)}$ through a Harnack chain $\{B^{(k)}\}_{k=1}^N$ consisting of balls of unit radius, with centers lying along the segment joining those of~$B^{(1)}$ and~$B^{(N)}$. Additionally, we require that
	\begin{equation*}
		\label{eq:misurainter}
		\frac{\abs{B^{(k)}}}{\abs*{B^{(k-1)}\cap B^{(k)}}} \le 4^n \quad\mbox{for } k=2,\dots,N,
	\end{equation*}
	and observe that we can choose these balls in such a way that~$N \leq 16 \sqrt{n} R_2$. This chain possibly extends beyond the region where~$C_\mathcal{S}$ is universally controlled, therefore we shall exploit Theorem~\ref{th:wh}, together with Remark~\ref{rem:cost-wk}, and consider
	\begin{equation*}
		\mathcal{M}'(R) = R^{-n/s} m'(R) = R^{-n/s} \left(\frac{c_\flat R}{C_\mathcal{S}}\right)^{\!\! C_\natural C_\mathcal{S} \left(C_\mathcal{S}^2 / R \right)^\mu} \quad \mbox{with } \mu = \frac{2}{\mathfrak{q}-2} \geq 1.
	\end{equation*}
	Furthermore, we may assume
	\begin{equation}
		\label{eq:ass-Cs}
		\mathcal{M}'(1) \leq 1.
	\end{equation}
	Thus, a standard chaining argument gives
	\begin{equation}
		\label{eq:wh2}
		\norma*{v}_{L^s(B^{(1)})} \leq \left(\frac{2 \, 4^{n/s} \mathfrak{C} }{m'(1) \min\left\{1,\abs{B_1}\right\}^{1/s}}\right)^{\!\! N} \left( \norma*{v}_{L^s(B^{(N)})} + 2 C_4 \defi(u,\kappa)^{\alpha_\sharp} \right) \!.
	\end{equation}
	Finally, following Lemma~2.2 of~\cite{cicopepo} and assuming
	\begin{equation}
		\label{eq:cond-m'}
		m'\left(\frac{\sqrt{n}\delta}{2}\right) \leq 1,
	\end{equation}
	we obtain
	\begin{equation}
	\label{eq:wh3}
		\mathcal{M}'\left(\frac{\sqrt{n}\delta}{2}\right) \norma*{v}_{L^s(B^{(\delta)})}
		\leq \frac{C_\flat}{\delta^\beta \, m'(\sqrt{n}\delta/2)^{\beta \log\left(5/\delta\right)}} \left(\norma*{v}_{L^s(B^{(1)})} + 2 C_4 \defi(u,\kappa)^{\alpha_\sharp} \right) \!,
	\end{equation}
	up to possibly enlarging~$C_\flat \geq 1$ and~$\beta>0$.
	
	Chaining in this order~\eqref{eq:wh3},~\eqref{eq:wh2}, and~\eqref{eq:wh1} and using that~$N \leq 16\sqrt{n} R_2$, we finally conclude that
	\begin{multline}
		\label{eq:wh-final-plap}
		\mathcal{M}'\left(\frac{\sqrt{n}\delta}{2}\right) \norma*{v}_{L^s(B^{(\delta)})} \\
		\leq \frac{C_\flat^2}{\delta^{2\beta}}  \frac{m'(\sqrt{n}\delta/2)^{-\beta \log\left(5/\delta\right)}}{m(\sqrt{n}\delta/2)^{\beta \log\left(5/\delta\right)}} \left(\frac{C_\star}{m'(1)}\right)^{\!\! 16 \sqrt{n} R_2} \left(\min_{K_\delta} v + 2 C_4 \defi(u,\kappa)^{\alpha_\sharp}\right) \!,
	\end{multline}
	for some~$C_\star \geq 1$ depending on universal constants, up to possibly enlarging~$C_\flat \geq 1$ once more.
	
	Next, we shall take care of the constant~$C_\mathcal{S}$. In particular, we shall estimate this constant on each ball used to obtain~\eqref{eq:wh2} and~\eqref{eq:wh3}. Exploiting~\eqref{eq:limit-lstar-plap}, the definition of~$N_0$, and the lower bound for~$R_2$, it is easy to see that these balls are contained in~$B_{c_n R_2}(0)$ for some dimensional~$c_n \geq 1$. Then, fixing~$r$ as above, we have
	\begin{equation*}
		\int_{B_{c_n R_2+10}\setminus B_{\! R_0}} \frac{1}{\abs*{\nabla u}^{(p-1)r}} \, dx \leq C_3 \left(c_n R_2+10\right)^{(n-1)r+n} \!,
	\end{equation*}
	Thus, by Theorem~\ref{th:int-grad-sp} with~$R=R_0$, we may assume
	\begin{equation}
		\label{eq:Cast-bound-2}
		C^\ast \leq C_3 \, R_2^{(n-1)r+n},
	\end{equation}
	possibly after enlarging~$C_3 \geq 1$, if we restrict ourselves to
	\begin{equation*}
		\gamma \leq \gamma_4 \coloneqq \min\left\{\exp\left\{- \mathcal{B}^\frac{1}{\alpha} \left(\frac{10}{c_n C_5}\right)^{\!\!\frac{n-p}{p(p-1)\alpha}}\right\}, \exp\left\{-C_5^{-\frac{n-p}{p(p-1)\alpha}}\mathcal{B}^\frac{1}{\alpha}\left(\frac{\mathscr{C}_{\! R_0}}{C_3 }\right)^{\!\!\frac{n-p}{p(p-1)\alpha[(n-1)r+n]}} \right\} \right\}.
	\end{equation*}
	By Theorem~\ref{th:w-sobolev} and~\eqref{eq:Cast-bound-2} it follows that
	\begin{equation}
		\label{eq:defomega}
		C_\mathcal{S} \leq C_6 \, R_2^{\varpi} \quad \mbox{with } \varpi = \max\left\{\frac{np}{p-1},\frac{p-2}{2(p-1)r}\left[(n-1)r+n\right]\right\}
	\end{equation}
	for some~$C_6\geq 16\sqrt{n}$ depending only on~$n$,~$p$, and~$c_1$. Note that~$\varpi > 2$, since we have~$np>p^2>2(p-1)$ for every~$2<p<n$, and it depends only on~$n$ and~$p$.
	
	Now, we consider~\eqref{eq:ass-Cs}. Since~$c_\flat \in (0,1)$, this condition is satisfied whenever~$C_\mathcal{S}>1$. In light of~\eqref{eq:defomega}, we obviously have
	\begin{equation}
		\label{eq:dicot-CS}
		\mbox{either} \quad C_\mathcal{S} \leq 1 \quad \mbox{or} \quad  1 < C_\mathcal{S} \leq C_6 \, R_2^{\varpi}.
	\end{equation}
	In particular,~$R_2^\varpi \geq 1$ and the second possibility in~\eqref{eq:dicot-CS} is consistent if we suppose
	\begin{equation*}
		\gamma \leq \gamma_5 \coloneqq \exp\left\{- C_5^{-\frac{n-p}{p(p-1)\alpha\varpi}} \mathcal{B}^\frac{1}{\alpha} \right\}.
	\end{equation*}
	If the first alternative in~\eqref{eq:dicot-CS} holds true,~$C_\mathcal{S}$ is actually universally bounded from above, and~\eqref{eq:wh-final-plap} simplifies to
	\begin{equation*}
		\begin{split}
			\min_{K_\delta} v \geq \left(\frac{2}{\sqrt{n}}\right)^{\!\!\frac{n}{s}}
			&\,\frac{\delta^{2\beta}}{C_\flat^2} \left(\frac{c_\flat \sqrt{n}\delta}{2}\right)^{\!\! C_\natural\left(2/\sqrt{n}\delta\right) ^{\mu}+2\beta C_\natural\left(2/\sqrt{n}\delta\right)^{\mu}\log(5/\delta)}
			\widetilde{C}^{-16\sqrt{n}R_2} \cdot \\ &\cdot\delta^{-n/s}\norma*{v}_{L^s(B^{(\delta)})} - 2 C_4 \defi(u,\kappa)^{\alpha_\sharp}
		\end{split}
	\end{equation*}
	for some universal~$\widetilde{C}\geq 1$. Moreover, we assume that~$\delta \in \left(0,1/3\sqrt{n}\right)$ satisfies
	\begin{equation}
		\label{eq:req-dR2}
		\delta \geq \frac{2C_6}{c_\flat \sqrt{n}} R_2^{-\varpi}.
	\end{equation}
	Since there exists some~$\delta_\flat \in (0,1)$, depending only on~$n$ and~$p$, such that
	\begin{equation*}
		\delta^{\mu} \log(5/\delta) \leq 1,
	\end{equation*}
	for~$\delta \leq \delta_\flat$, by requiring~$\delta \leq \delta_\sharp \coloneqq \min\left\{\delta_\flat, 1/6\sqrt{n}\right\}$, we get
	\begin{equation}
		\label{eq:minv-case1}
		\min_{K_\delta} v \geq  C_7 \,\delta^{2\beta} e^{-C_8 R_2^{2\varpi\mu+1}} \,\delta^{-n/s}\norma*{v}_{L^s(B^{(\delta)})} - 2 C_4 \defi(u,\kappa)^{\alpha_\sharp},
	\end{equation}
	where~$C_7 \coloneqq C_\flat^{-2} \left(2/\sqrt{n}\right)^{n/s}$ and~$C_8 \geq 1$ is a universal constant. We now observe that~$s<\past$, therefore
	\begin{equation*}
		\dashint_{Q_\delta} \left(u_{\lambda_\star}-u\right)_+^{\past} dx \leq C_0^{\past-s} \,\dashint_{Q_\delta} \left(u_{\lambda_\star}-u\right)_+^{s} \,dx.
	\end{equation*}
	From the latter and~\eqref{eq:massa-cubo}, we deduce that
	\begin{equation}
		\label{eq:massa-media}
		\delta^{-n/s}\norma*{v}_{L^s(B^{(\delta)})} \geq \frac{\abs{B_1}^{n/s} \mathcal{B}^{p}}{2^{1/s} \left(8C_5\right)^{n/s} C_0^{\past/s-1}} \,\abs*{\log\mathrm{def}(u,\kappa)}^{-\frac{p\cdot\past\alpha}{s}}.
	\end{equation}
	Finally, using~\eqref{eq:R2def} and~\eqref{eq:massa-media}, we estimate the right-hand side of~\eqref{eq:minv-case1}, obtaining
	\begin{equation*}
		\begin{split}
			\min_{K_\delta} v &\geq \frac{\widehat{C}_7 \,\delta^{2\beta} \mathcal{B}^{p}}{2^{1/s} \left(8C_5\right)^{n/s} C_0^{\past/s-1}} \,\abs{\log\defi(u,\kappa)}^{-\frac{p \cdot \past\alpha}{s}} \cdot \\ &\quad \cdot e^{-C_8 \, C_5^{2\varpi\mu+1} \mathcal{B}^{-\frac{\past(p-1)(2\varpi\mu+1)}{n}} \abs*{\log\defi(u,\kappa)}^{\frac{p(p-1)(2\varpi\mu+1) \alpha}{n-p}}} - 2 C_4 \defi(u,\kappa)^{\alpha_\sharp},
		\end{split}
	\end{equation*}
	where~$\widehat{C}_7 \coloneqq \abs{B_1}^{n/s} C_7$. First, we fix
	\begin{equation*}
		\mathcal{B} \coloneqq \max\left\{ \left(4 C_4 \widehat{C}_7^{-1} \, 2^{1/s} \left(8C_5\right)^{n/s}C_0^{\past/s-1}\right)^{\!\frac{1}{p}}, \left(2 {\alpha_\sharp}^{\!-1} C_8 \, C_5^{2\varpi\mu+1}\right)^{\!\!\frac{n}{\past(2\varpi\mu+1)(p-1)}}\right\} \!,
	\end{equation*}
	therefore
	\begin{equation*}
		\min_{K_\delta} v \geq 4 C_4 \,\delta^{2\beta} \abs*{\log\defi(u,\kappa)}^{-\frac{p\cdot\past\alpha}{s}} \, e^{-\frac{{\alpha_\sharp}}{2} \abs*{\log\defi(u,\kappa)}^{\frac{p(p-1)(2\varpi\mu+1) \alpha}{n-p}}} - 2 C_4 \defi(u,\kappa)^{\alpha_\sharp}.
	\end{equation*}
	Then, we take
	\begin{equation}
		\label{eq:alphaanddeltadef-plap}
		\alpha \coloneqq \frac{n-p}{p(p-1)(2\varpi\mu+1)} \quad \text{and} \quad \delta \geq \left( \defi(u,\kappa)^\frac{{\alpha_\sharp}}{2} \abs*{\log\defi(u,\kappa)}^{\frac{p\cdot\past\alpha}{s}} \right)^{\!\! \frac{1}{2\beta}}\!,
	\end{equation}
	so that
	\begin{equation}
		\label{eq:positività-plap}
		\min_{K_\delta} v \geq 2C_4 \defi(u,\kappa)^{\alpha_\sharp}.
	\end{equation}
	Taking into account~\eqref{eq:req-dR2} and~\eqref{eq:alphaanddeltadef-plap}, we set
	\begin{equation*}
		\delta \coloneqq \max\left\{\left( \defi(u,\kappa)^\frac{{\alpha_\sharp}}{2} \abs*{\log\defi(u,\kappa)}^{\frac{p\cdot\past\alpha}{s}} \right)^{\!\! \frac{1}{2\beta}}, \frac{2C_6 \, C_5^{-\varpi}}{c_\flat \sqrt{n}} \,\mathcal{B}^{\frac{\varpi\past(p-1)}{n}} \abs*{\log\defi(u,\kappa)}^{-\frac{\varpi}{2\varpi\mu+1}} \right\}.
	\end{equation*}
	Note that~$\delta \to 0$ as~$\defi(u,\kappa) \to 0$. Therefore,~$\delta \leq \delta_\sharp$ provided~$\gamma \leq \gamma_6$ for some small universal~$\gamma_6>0$. This condition also implies the validity of~\eqref{eq:cond-m}. Moreover, one can observe that there exists some universal~$\gamma_{7}>0$ such that
	\begin{equation*}
		\defi(\kappa)^\frac{{\alpha_\sharp}}{4\beta} \abs*{\log\defi(u,\kappa)}^{\frac{1}{2\varpi\mu+1} \left(\frac{np}{2\beta s(p-1)}+\varpi\right)} \leq \frac{2C_6}{c_\flat \sqrt{n}} C_5^{-\varpi} \,\mathcal{B}^{\frac{\varpi\past(p-1)}{n}},
	\end{equation*}
	provided that~$\gamma \leq \gamma_{7}$. This, in turn, implies
	\begin{equation*}
		\delta = \frac{2C_6}{c_\flat \sqrt{n}} R_2^{-\varpi} = \frac{2C_6}{c_\flat \sqrt{n}} C_5^{-\varpi} \,\mathcal{B}^{\frac{\varpi\past(p-1)}{n}} \abs*{\log\defi(u,\kappa)}^{-\frac{\varpi}{2\varpi\mu+1}}.
	\end{equation*}
	We shall finally check that this choice of~$\delta$ is consistent with condition~\eqref{eq:cond-delta-1}, at least if~$\gamma$ is small enough. Since~$\past+1>1$, it suffices to verify that
	\begin{equation*}
		\frac{2C_6}{c_\flat \sqrt{n}} R_2^{-\varpi} \leq  \frac{ \mathcal{B}^{\past} \,\abs*{\log\defi(u,\kappa)}^{-\past \alpha}}{4^{n} \left( 6\sqrt{n} \, C_1 \right)^{\past} \! R_2^{n-1}}.
	\end{equation*}
	Recalling the definition of~$R_2$ given in~\eqref{eq:R2def}, the latter holds if
	\begin{equation*}
		C_{9} \leq \abs*{\log\defi(u,\kappa)}^{\frac{\alpha\past}{n} \left[(p-1)(\varpi-n+1)-n\right]}
	\end{equation*}
	for some universal~$C_{9}>0$. Thanks to~\eqref{eq:defomega}, it follows that~$(p-1)(\varpi-n+1)-n>0$, thus condition~\eqref{eq:cond-delta-1} is verified if we restrict ourselves to
	\begin{equation*}
		\gamma \leq \gamma_{8} \coloneqq \exp\left\{-C_{9}^\frac{n}{\alpha\past \left[(p-1)(\varpi-n+1)-n\right]}\right\}.
	\end{equation*}
	
	At this point, we are left with the second alternative in~\eqref{eq:dicot-CS}. We still assume that~\eqref{eq:req-dR2} holds and, from~\eqref{eq:wh-final-plap}, we deduce that
	\begin{equation*}
		\label{eq:posit-tosimplyf}
		\begin{split}
			\min_{K_\delta} v &\geq \left(\frac{2}{\sqrt{n}}\right)^{\!\!\frac{n}{s}}
			\frac{\delta^{2\beta}}{C_\flat^2} R_2^{-2\varpi \left(1+\beta\varpi\right) C_\natural C_6^{\mu+1} R_2^{3\varpi\mu+2}} R_2^{-\beta\varpi^2 C_\natural R_2^{\varpi\mu+1}} \left(\frac{c_\flat R_2^{-\varpi}}{C_6}\right)^{\!\! C_\natural C_6^{2\mu+2} R_2^{(2\mu+1)\varpi+1}} \!\cdot\\
			&\qquad \cdot C_\star^{-16\sqrt{n}R_2} \,\delta^{-n/s}\norma*{v}_{L^s(B^{(\delta)})} - 2 C_4 \defi(u,\kappa)^{\alpha_\sharp}.
		\end{split}
	\end{equation*}
	After performing some computations and possibly enlarging~$C_\star$, we can simplify this expression to
	\begin{equation*}
		\min_{K_\delta} v \geq C_7 \,\delta^{2\beta} e^{-C_{10} R_2^{3\varpi\mu+3}} \delta^{-n/s}\norma*{v}_{L^s(B^{(\delta)})} - 2 C_4 \defi(u,\kappa)^{\alpha_\sharp},
	\end{equation*}
	for some universal constant~$C_{10} \geq 1$. From this,~\eqref{eq:R2def}, and~\eqref{eq:massa-media}, we conclude that
	\begin{align*}
		\min_{K_\delta} v &\geq \frac{\widehat{C}_7 \,\delta^{2\beta} \mathcal{B}^{p}}{2^{1/s} \left(8C_5\right)^{n/s} C_0^{\past/s-1}} \,\abs{\log\defi(u,\kappa)}^{-\frac{p \cdot \past\alpha}{s}} \cdot \\ &\quad \cdot e^{-C_{10} \, C_5^{3\varpi\mu+3} \mathcal{B}^{-\frac{\past(p-1)(3\varpi\mu+3)}{n}} \abs*{\log\defi(u,\kappa)}^{\frac{p(p-1)(3\varpi\mu+3) \alpha}{n-p}}} - 2 C_4 \defi(u,\kappa)^{\alpha_\sharp}.
	\end{align*}
	First, we fix
	\begin{equation*}
		\mathcal{B} \coloneqq \max\left\{ \left(4C_4 \widehat{C}_7^{-1} \,2^{1/s} \left(8C_5\right)^{n/s}C_0^{\past/s-1}\right)^{\!\frac{1}{p}}, \left(2 {\alpha_\sharp}^{\!-1} C_{10} \, C_5^{3\varpi\mu+3}\right)^{\!\!\frac{n}{\past(3\varpi\mu+3)(p-1)}}\right\} \!,
	\end{equation*}
	therefore
	\begin{equation*}
		\min_{K_\delta} v \geq 4 C_4 \,\delta^{2\beta} \abs*{\log\defi(u,\kappa)}^{-\frac{p\cdot\past\alpha}{s}} \, e^{-\frac{{\alpha_\sharp}}{2} \abs*{\log\defi(u,\kappa)}^{\frac{p(p-1)(3\varpi\mu+3) \alpha}{n-p}}} - 2 C_4 \defi(u,\kappa)^{\alpha_\sharp}.
	\end{equation*}
	Then, we take
	\begin{equation}
		\label{eq:alphadef-plap-2}
		\alpha \coloneqq \frac{n-p}{p(p-1)(3\varpi\mu+3)} \quad \text{and} \quad \delta \geq \left( \defi(u,\kappa)^\frac{{\alpha_\sharp}}{2} \abs*{\log\defi(u,\kappa)}^{\frac{p\cdot\past\alpha}{s}} \right)^{\!\! \frac{1}{2\beta}}\!,
	\end{equation}
	so that~\eqref{eq:positività-plap} holds also in this case. Considering~\eqref{eq:req-dR2} and~\eqref{eq:alphadef-plap-2}, we fix
	\begin{equation*}
		\delta \coloneqq \max\left\{\left( \defi(u,\kappa)^\frac{{\alpha_\sharp}}{2} \abs*{\log\defi(u,\kappa)}^{\frac{p\cdot\past\alpha}{s}} \right)^{\!\! \frac{1}{2\beta}}, \frac{2C_6 \, C_5^{-\varpi}}{c_\flat \sqrt{n}} \,\mathcal{B}^{\frac{\varpi\past(p-1)}{n}} \abs*{\log\defi(u,\kappa)}^{-\frac{\varpi}{3\varpi\mu+3}} \right\}.
	\end{equation*}
	Since~$\delta \to 0$ as $\defi(u,\kappa) \to 0$, our assumption~$\delta < 1/3\sqrt{n}$ is verified provided~$\gamma$ is smaller than a universal $\gamma_9>0$. This condition also implies the validity of~\eqref{eq:cond-m} and~\eqref{eq:cond-m'}. As above, we can observe that there exists some universal~$\gamma_{10}>0$ such that
	\begin{equation*}
		\defi(\kappa)^\frac{{\alpha_\sharp}}{4\beta} \abs*{\log\defi(u,\kappa)}^{\frac{1}{3\varpi\mu+3} \left(\frac{np}{2\beta s(p-1)}+\varpi\right)} \leq \frac{2C_6}{c_\flat \sqrt{n}} C_5^{-\varpi} \,\mathcal{B}^{\frac{\varpi\past(p-1)}{n}},
	\end{equation*}
	provided that~$\gamma \leq \gamma_{10}$. This implies
	\begin{equation*}
		\delta = \frac{2C_6}{c_\flat \sqrt{n}} R_2^{-\varpi} = \frac{2C_6}{c_\flat \sqrt{n}} C_5^{-\varpi} \,\mathcal{B}^{\frac{\varpi\past(p-1)}{n}} \abs*{\log\defi(u,\kappa)}^{-\frac{\varpi}{3\varpi\mu+3}}.
	\end{equation*}
	Finally, one can check that condition~\eqref{eq:cond-delta-1} holds provided that~$\gamma \leq \gamma_{11}$, for some universal~$\gamma_{11}>0$.
	
	
	\subsection{Deriving a contradiction.} \label{step:contr-plap}
	
	Here, we aim to prove that
	\begin{equation}
		\label{eq:contrest-plap}
		\norma*{(u - u_{\lambda_\star - \varepsilon})_+}_{L^{\past} \! (\Sigma_{\lambda_\star - \varepsilon})} \le \mathscr{K} \defi(u,\kappa)^{\frac{1}{p-1}},
	\end{equation}
	for every~$\varepsilon > 0$ small enough and provided that~$\gamma$ is sufficiently small. Clearly, this would lead to a contradiction with the definition of~$\lambda_\star$ and thus to the validity of~\eqref{eq:u-ulambdastar-small}.
	
	For~$\epsilon > 0$ to be determined shortly, define
	\begin{equation*}
		E_{\mathsf{r}}^\epsilon \coloneqq \big( {\R^n \setminus B_{\mathsf{r}}(0)} \big) \cap \Sigma_{\lambda_\star-\epsilon} \quad \text{and} \quad S_\delta^\epsilon \coloneqq B_{\mathsf{r}}(0) \cap \left(\Sigma_{\lambda_\star-\epsilon} \setminus \overline{\Sigma_{\lambda_\star+3\sqrt{n}\delta}} \right).
	\end{equation*}
	In this way,~$E_{\mathsf{r}}^\epsilon$,~$K_\delta$, and~$S_\delta^\epsilon$ form a decomposition of~$\Sigma_{\lambda_\star-\epsilon}$ up to negligible sets -- recall that~$K_\delta$ is given in~\eqref{eq:def-Kdelta-plap}. By exploiting the gradient bound~\eqref{eq:C1boundonu} together with~\eqref{eq:positività-plap}, we get
	\begin{equation*}
		u_{\lambda_\star-\epsilon}-u \geq u_{\lambda_\star}-u - 2 C_1 \,\epsilon \geq  14C_4 \defi(u,\kappa)^{\alpha_\sharp} - 2 C_1 \,\epsilon \quad\text{in} \; K_\delta.
	\end{equation*}
	By taking
	\begin{equation*}
		\epsilon \in \left( 0, \frac{7C_4}{2C_1} \defi(u,\kappa)^{\alpha_\sharp} \right] \!,
	\end{equation*}
	we obtain that~$u_{\lambda_\star-\epsilon}-u  \geq 0$ in~$K_\delta$. In addition, note that~$\epsilon \to 0$ as~$\defi(u,\kappa) \to 0$.
	
	As for~\eqref{eq:test-finale}, we have
	\begin{align*}
		\int_{\Sigma_{\lambda_\star-\epsilon}} &\left\langle \abs*{\nabla u}^{p-2} \nabla u - \abs*{\nabla u_{\lambda_\star-\epsilon}}^{p-2} \nabla u_{\lambda_\star-\epsilon}, \nabla (u-u_{\lambda_\star-\epsilon})_+ \right\rangle \, dx \\
		&=\int_{\Sigma_{\lambda_\star-\epsilon}} \left(\kappa-\kappa_0 \right) u^{\past-1} \left(u-u_{\lambda_\star-\epsilon}\right)_+  \, dx \\
		&\quad+ \int_{\Sigma_{\lambda_\star-\epsilon}} \left(\kappa_0 -\kappa_{\lambda_\star-\epsilon}\right) u_{\lambda_\star-\epsilon}^{\past-1} \left(u-u_{\lambda_\star-\epsilon}\right)_+  \, dx \\
		&\quad+ \kappa_0 \int_{\Sigma_{\lambda_\star-\epsilon}} \left(u^{\past-1}-u^{\past-1}_{\lambda_\star-\epsilon}\right) \left(u-u_{\lambda_\star-\epsilon}\right)_+ \, dx.
	\end{align*}
	We estimate the first two terms on the right-hand side as in~\ref{sub:Mpstarts-plap} deducing
	\begin{equation*}
		\begin{split}
			\int_{\Sigma_{\lambda_\star-\epsilon}} &\left(\kappa-\kappa_{\lambda_\star-\epsilon}\right) u^{\past-1} (u-u_{\lambda_\star-\epsilon})_+ \, dx + \int_{\Sigma_{\lambda_\star-\epsilon}} \left(\kappa_0 -\kappa_{\lambda_\star-\epsilon}\right) u_{\lambda_\star-\epsilon}^{\past-1} \left(u-u_{\lambda_\star-\epsilon}\right)_+  \, dx \\
			&\leq 2S^{-1} \defi(u,\kappa) \,\norma*{\nabla (u-u_{\lambda_\star-\epsilon})_+}_{L^{p}(\Sigma_{\lambda_\star-\epsilon})}.
		\end{split}
	\end{equation*}
	To handle the third term, we take advantage of the decomposition of~$\Sigma_{\lambda_\star-\epsilon}$. Using that~$(u - u_{\lambda_\star - \varepsilon})_+ = 0$ in~$K_\delta$, we get
	\begin{equation*}
		\begin{split}
			\kappa_0 \int_{\Sigma_{\lambda_\star-\epsilon}} &\left(u^{\past-1}-u^{\past-1}_{\lambda_\star-\epsilon}\right) (u-u_{\lambda_\star-\epsilon})_+ \, dx \\
			&\leq \left({\past}-1\right) \kappa_0 \int_{\Sigma_{\lambda_\star-\epsilon}} u^{\past-2} (u-u_{\lambda_\star-\epsilon})_+^2 \, dx \\
			&= \left({\past}-1\right) \kappa_0 \left\{ \int_{E^\epsilon_r} u^{\past-2} (u-u_{\lambda_\star-\epsilon})_+^2 \, dx + \int_{S^\epsilon_\delta} u^{\past-2} (u-u_{\lambda_\star-\epsilon})_+^2 \, dx \right\}.
		\end{split}
	\end{equation*}
	We take care of this two terms separately. For the integral over~$E^\epsilon_{\mathsf{r}}$ we follow the argument from~\ref{sub:Mpstarts-plap} to obtain
	\begin{equation*}
		\int_{E^\epsilon_{\mathsf{r}}} u^{\past-2} (u-u_{\lambda_\star-\epsilon})_+^2 \, dx \leq \frac{C_0^{\past-2}}{c_1^{p-2}\mathsf{r}^{\beta^\ast}} \left(\frac{2}{n+s_\ast-2}\right)^{\!\!2} \int_{\Sigma_{\lambda_\star-\epsilon}} \,\abs*{\nabla u}^{p-2} \,\abs*{\nabla (u-u_{\lambda_\star-\epsilon})_+}^2 \, dx.
	\end{equation*}
	Since~$(u-u_{\lambda_\star-\epsilon})_+$ vanishes on~$K_\delta$, the weighted Poincaré inequality of Corollary~\ref{cor:w-poinc} yields
	\begin{equation*}
		\begin{split}
			\int_{S^\epsilon_\delta} u^{\past-2} (u-u_{\lambda_\star-\epsilon})_+^2 \, dx &= \int_{S^\epsilon_\delta \cup K_\delta} u^{\past-2} (u-u_{\lambda_\star-\epsilon})_+^2 \, dx \\
			&\leq \mathcal{C}_P(S^\epsilon_\delta)\, C_\mathcal{S}^2(B_{\mathsf{r}}(0) \setminus K_\delta) \int_{\Sigma_{\lambda_\star-\epsilon}} \,\abs*{\nabla u}^{p-2} \,\abs*{\nabla (u-u_{\lambda_\star-\epsilon})_+}^2 \, dx.
		\end{split}
	\end{equation*}
	
	By combining the above estimates and arguing as in~\ref{sub:Mpstarts-plap}, we conclude that
	\begin{equation*}
		\begin{split}
			\mathcal{A}(\mathsf{r},\epsilon) \int_{\Sigma_{\lambda_\star-\epsilon}} &\left(\abs*{\nabla u}+\abs*{\nabla u_{\lambda_\star-\epsilon}}\right)^{p-2} \abs*{\nabla (u-u_{\lambda_\star-\epsilon})_+}^2 \, dx \\
			&\leq 2S^{-1} c(p)^{-1} \defi(u,\kappa) \,\norma*{\nabla (u-u_{\lambda_\star-\epsilon})_+}_{L^{p}(\Sigma_{\lambda_\star-\epsilon})},
		\end{split}
	\end{equation*}
	where
	\begin{equation*}
		\mathcal{A}(\mathsf{r},\epsilon) \coloneqq 1 - \frac{\left({\past}-1\right) \kappa_0 \, C_0^{\past-2}}{c(p) c_1^{p-2}\mathsf{r}^{\beta^\ast}} \left(\frac{2}{n+s_\ast-2}\right)^{\!\!2} - \left({\past}-1\right) \kappa_0 \,c(p)^{-1} \mathcal{C}_P(S^\epsilon_\delta)\, C_\mathcal{S}^2(B_{\mathsf{r}}(0) \setminus K_\delta).
	\end{equation*}
	To deduce~\eqref{eq:contrest-plap} from this, it suffices to show that the parameters~$\mathsf{r}$ and~$\epsilon$ can be selected in such a way that
	\begin{equation}
		\label{eq:contrad-A-plap}
		\mathcal{A}(\mathsf{r},\epsilon) \geq \frac{1}{2},
	\end{equation}
	that is
	\begin{equation*}
		\frac{\left({\past}-1\right) \kappa_0 \, C_0^{\past-2}}{c(p) c_1^{p-2}\mathsf{r}^{\beta^\ast}} \left(\frac{2}{n+s_\ast-2}\right)^{\!\!2} + \left({\past}-1\right) \kappa_0 \,c(p)^{-1} \mathcal{C}_P(S^\epsilon_\delta)\,  C_\mathcal{S}^2(B_{\mathsf{r}}(0) \setminus K_\delta) \leq \frac{1}{2}.
	\end{equation*}
	To proceed, we first fix 
	\begin{equation*}
		\mathsf{r} \coloneqq \max\left\{R_0, \lambda_0+2, \left(\frac{4 \left({\past}-1\right) \norma{\kappa}_{L^\infty(\R^n)} \, C_0^{\past-2}}{c(p) c_1^{p-2}}\right)^{\!\!\frac{1}{\beta^\ast}} \left(\frac{2}{n+s_\ast-2}\right)^{\!\!\frac{2}{\beta^\ast}} \right\}
	\end{equation*}
	and observe that we have~$\mathsf{r}\leq R_2$ by requiring
	\begin{equation*}
		\gamma \leq \gamma_{12} \coloneqq \exp\left\{-\mathcal{B}^\frac{1}{\alpha}\left(\mathsf{r} C_5^{-1}\right)^{\!\frac{n-p}{p(p-1)\alpha}}\right\}.
	\end{equation*}
	
	By Theorem~\ref{th:int-grad-sp}, we can assume that~$C^\ast \leq \mathscr{C}_\mathsf{r}$, for some~$\mathscr{C}_\mathsf{r}>0$ depending on~$n$,~$p$, $C_0$,~$c_1$,~$\norma{\kappa}_{L^\infty(\R^n)}$,~$\underline{\kappa}$, and~$R_0$. Moreover, by the definition of~$\mathsf{r}$ and~\eqref{eq:limit-lstar-plap}, it follows that~$B_{1/2}(x) \subseteq K_\delta$ for some~$x \in K_\delta$. As a consequence, we can upper bound the constant~$\widehat{C}$ in Theorem~\ref{th:w-sobolev} and deduce that
	\begin{equation*}
		C_\mathcal{S}(B_{\mathsf{r}}(0) \setminus K_\delta) \leq C_{11},
	\end{equation*}
	for some universal~$C_{11} \geq 1$. In light of this, of Corollary~\ref{cor:w-poinc} with~$\theta=1/2$, and the fact that there exists a small universal~$\gamma_\flat>0$ such that
	\begin{equation*}
		\gamma^{\alpha_\sharp} \,\abs*{\log\gamma}^{\frac{\varpi}{2\varpi\mu+1}} \leq 1,
	\end{equation*}
	for~$\gamma \leq \gamma_\flat$, we choose~$\epsilon$ small such that
	\begin{align*}
		\left({\past}-1\right)& \,\kappa_0 \, c(p)^{-1} \mathcal{C}_P(S^\epsilon_\delta)\, C_\mathcal{S}^2(B_{\mathsf{r}}(0) \setminus K_\delta) \\
		&\leq \left({\past}-1\right) \norma{\kappa}_{L^\infty(\R^n)} \, c(p)^{-1} C_{11}^2 \abs*{S^\epsilon_\delta}^\frac{1}{(p-1)n} \leq C_{11}^2 \mathsf{r}^\frac{n-1}{(p-1)n} \left(3\delta+\epsilon\right)^\frac{1}{(p-1)n} \\
		&\leq C_{12} \,\abs*{\log\gamma}^{-\frac{\varpi}{\left(2\varpi\mu+1\right)\left(p-1\right)n}} \leq \frac{1}{4}.
	\end{align*}
	The last inequality holds provided
	\begin{equation*}
		\gamma \leq \gamma_{13} \coloneqq \min\left\{\gamma_\flat, \exp\left\{-\left(4C_{12}\right)^{\!\frac{(2\varpi\mu+1)(p-1)n}{\varpi}}\right\} \right\}.
	\end{equation*}
	Since~\eqref{eq:contrad-A-plap} holds and~$p>2$, we obtain
	\begin{equation*}
		\norma*{\nabla (u-u_{\lambda_\star-\epsilon})_+}_{L^{p}(\Sigma_{\lambda_\star-\epsilon})}^p \leq 4S^{-1} c(p)^{-1} \defi(u,\kappa) \,\norma*{\nabla (u-u_{\lambda_\star-\epsilon})_+}_{L^{p}(\Sigma_{\lambda_\star-\epsilon})}.
	\end{equation*}
	As in~\ref{sub:Mpstarts-plap}, this leads to the validity of~\eqref{eq:contrest-plap} and, ultimately, to a contradiction.
	
	So far, we have established~\eqref{eq:u-ulambdastar-small}, with some~$\alpha$ depending only on~$n$ and~$p$. This, together with~\eqref{eq:stimaperlambda} for~$\lambda=\lambda_\star$, gives
	\begin{equation*}
		\label{eq:p*-Rn}
		\norma*{u-u_{\lambda_\star}}_{L^{\past}\!(\R^n)} = 2 \norma*{u-u_{\lambda_\star}}_{L^{\past}\!(\Sigma_{\lambda_\star})} \leq C \, \abs*{\log\mathrm{def}(u,\kappa)}^{-\alpha},
	\end{equation*}
	for some universal constant~$C\geq 1$ and some~$\alpha>0$ depending only on~$n$ and~$p$.
	
	We conclude this step by proving the validity of the claims made in~\eqref{eq:mainMPclaim-plap}. In~\eqref{eq:bpunt} we have already obtained the pointwise estimate appearing on the first line.
	The~$L^\infty$-bound on the second line can be easily established by applying the argument of~\ref{step:int2point-plap}, taking into account~\eqref{eq:limit-lstar-plap}, to the functions~$u-u_{\lambda_\star}$ and $u_{\lambda_\star}-u$ and provided $\gamma$ is smaller than a universal $\gamma_{14}>0$. Finally, for the $L^p$-estimate for the gradient we test the equation for~$u$ and~$u_{\lambda_\star}$ with~$u-u_{\lambda_\star}$, and exploit~\eqref{eq:disp-basso-2}, hypothesis~\eqref{eq:decadimento}, and H\"older inequality to conclude
	\begin{equation*}
		\begin{split}
			\hat{c}(p) &\int_{\R^n} \,\abs*{\nabla (u-u_{\lambda_\star})}^p \, dx \leq \int_{\R^n} \left\langle \abs*{\nabla u}^{p-2} \,\nabla u - \abs*{\nabla u_{\lambda_\star}}^{p-2} \,\nabla u_{\lambda_\star}, \nabla (u-u_{\lambda_\star}) \right\rangle \, dx \\
			&= \int_{\R^n} \left(\kappa u^{\past-1} - \kappa_{\lambda_\star} u^{\past-1}_{\lambda_\star}\right) (u-u_{\lambda_\star}) \, dx \\
			&= \int_{\R^n} \left(\kappa-\kappa_0\right) u^{\past-1} \left(u-u_{\lambda_\star}\right) dx + \int_{\R^n} \left(\kappa_{\lambda_\star}-\kappa_0\right) u^{\past-1}_{\lambda_\star} \left(u-u_{\lambda_\star}\right) dx \\
			&\quad+ \kappa_0 \int_{\R^n} \left(u^{\past-1}-u^{\past-1}_{\lambda_\star}\right) \left(u-u_{\lambda_\star}\right) dx \\
			&\leq 2\defi(u,\kappa) \,\norma*{u-u_{\lambda_\star}}_{L^{\past}\!(\R^n)} + \left(\past-1\right) \norma{\kappa}_{L^\infty(\R^n)} \,\norma*{u}_{L^{\past}\!(\R^n)}^{\past-2} \norma*{u-u_{\lambda_\star}}_{L^{\past}\!(\R^n)}^2 \\
			&\leq C \,\abs*{\log\mathrm{def}(u,\kappa)}^{-2\alpha},
		\end{split}
	\end{equation*}
	for some universal constant~$C\geq 1$.
	
	\subsection{Almost radial symmetry with respect to an approximate center.} \label{step:almostradsym-plap}
	
	By repeating the procedure along~$n$ linearly independent directions -- say,~$e_1,\ldots,e_n$ -- we identify~$n$ values~$\lambda_k = \lambda_\star(e_k) \in \R$,~$k = 1, \dots, n$, for which~\eqref{eq:mainMPclaim-plap} holds true. Considering the corresponding maximal hyperplanes~$T_{e_k, \lambda_k}$,~$k=1,\dots,n$, we define the approximate center of symmetry~$\mathcal{O}$ as their intersection. We then define the reflection with respect to~$\mathcal{O}$ by~$u_{\mathcal{O}}(x) \coloneqq u\left(2\lambda_1-x_1,\dots,2\lambda_n-x_n\right)$. As a consequence of this definition and~\eqref{eq:mainMPclaim-plap}, applying the triangle inequality yields
	\begin{equation}
		\label{eq:u-uOests-plap}
		\norma*{u - u_{\mathcal{O}}}_{L^\infty(\R^n)} + \norma*{\nabla (u - u_{\mathcal{O}})}_{L^p(\R^n)} \le n C_\sharp \,\abs{\log\defi(u,\kappa)}^{-\alpha_\star}.
	\end{equation}
	Moreover, in light of~\eqref{eq:limit-lstar-plap}, we know that
	\begin{equation}
		\label{eq:Oinlambdacube-plap}
		\mathcal{O} \in {[-\lambda_0, \lambda_0]}^n.
	\end{equation}
	Let~$\Sigma_{\omega, \lambda_\star}$ and~$T_{\omega, \lambda_\star}$, with~$\lambda_\star = \lambda_\star(\omega)$, denote the maximal half-space and maximal hyperplane corresponding to a direction~$\omega \in \sfera^{n - 1}$. We will prove that
	\begin{equation} \label{eq:claimhyperclose}
		\mbox{either} \quad \mathcal{O} \in \overline{\Sigma_{\omega, \lambda_\star}} \quad \mbox{or} \quad \mbox{dist}(\mathcal{O}, T_{\omega, \lambda_\star}) \le C_{13} \, \abs*{\log \defi(u,\kappa)}^{-\frac{\alpha_\star}{p+1}},
	\end{equation}
	for every~$\omega \in \sfera^{n- 1}$ and for some universal constant~$C_{13} > 0$.
	
	To establish~\eqref{eq:claimhyperclose}, let~$\omega \in \sfera^{n-1}$ be fixed and assume that
	\begin{equation} \label{eq:OnotinSigma}
		\mathcal{O} \notin \overline{\Sigma_{\lambda_\star}},
	\end{equation}
	where, for simplicity, we omit the reference to~$\omega$ in the notation.
	
	Using a change of variables and the~$\mathcal{D}^{1,p}$-estimate in~\eqref{eq:mainMPclaim-plap}, we see that
	\begin{align*}
		\abs*{\norma*{\nabla u}_{L^p(\Sigma_{\lambda_\star})} - \norma*{\nabla u}_{L^p(\R^n \setminus \Sigma_{\lambda_\star})}} \leq C_\sharp \abs*{\log \defi(u,\kappa)}^{-\alpha_\star}.
	\end{align*}
	From this and~\eqref{eq:bound-gradu-plap}, we compute
	\begin{align}
		\notag
		& \abs*{\int_{\Sigma_{\lambda_\star}} \,\abs*{\nabla u}^p \, dx - \frac{1}{2} \int_{\R^n} \,\abs*{\nabla u}^p \, dx} = \frac{1}{2} 
		\,\abs*{\norma*{\nabla u}_{L^p(\Sigma_{\lambda_\star})}^p - \norma*{\nabla u}_{L^p(\R^n \setminus \Sigma_{\lambda_\star})}^p} \\
		\label{eq:energ-semispazio}
		& \hspace{45pt} \leq C
		\,\abs*{\norma*{\nabla u}_{L^p(\Sigma_{\lambda_\star})} - \norma*{\nabla u}_{L^p(\R^n \setminus \Sigma_{\lambda_\star})} } \, \norma*{\nabla u}_{L^p(\R^n)}^{p-1} \\
		\notag
		& \hspace{45pt} \le C \,\abs*{\log \defi(u,\kappa)}^{-\alpha_\star} \left( \int_{\R^n} \left(1 + \abs*{x}^{\frac{n-1}{p-1}}\right)^{\!-p} dx \right)^{\!\! \frac{p-1}{p}} \le C \, \abs*{\log \defi(u,\kappa)}^{- \alpha_\star},
	\end{align}
	for some universal constant~$C \ge 1$.
	
	Let~$\Sigma_{\lambda_\star \mathcal{O}}$ and~$u_{\lambda_\star \mathcal{O}}$ denote the reflections of~$\Sigma_{\lambda_\star}$ and~$u_{\lambda_\star}$ with respect to~$\mathcal{O}$. Consider the slab~$\mathcal{S}_1 \coloneqq \R^n \setminus \left( \Sigma_{\lambda_\star} \cup \Sigma_{\lambda_\star\mathcal{O}} \right)$ and observe that~$\mathcal{S}_1 \neq \varnothing$, thanks to~\eqref{eq:OnotinSigma}. The~$\mathcal{D}^{1,p}$-bounds of~\eqref{eq:mainMPclaim-plap} and~\eqref{eq:u-uOests-plap} give that
	\begin{equation*}
		\norma*{ \nabla (u-u_{\lambda_{\star\mathcal{O}}}) }_{L^p(\R^n)} \le C \, \abs{\log\defi(u,\kappa)}^{- \alpha_\star}.
	\end{equation*}
	Through this, we easily obtain estimate~\eqref{eq:energ-semispazio} with~$\Sigma_{\lambda_\star\mathcal{O}}$ in place of~$\Sigma_{\lambda_\star}$. This, together with~\eqref{eq:energ-semispazio}, implies that
	\begin{equation}
		\label{eq:step1-plap}
		\sigma_1 \coloneqq \norma*{\nabla u}_{L^p(\mathcal{S}_1)} \leq C_{14} \,\abs{\log\defi(u,\kappa)}^{-\frac{\alpha_\star}{p}},
	\end{equation}
	for some universal constant~$C_{14}>0$.
	
	Proceeding inductively, we define~$\mathcal{S}_{2k}$ as the reflection of~$\mathcal{S}_{2k-1}$ with respect to the hyperplane~$T_{\lambda_\star}$ while~$\mathcal{S}_{2k+1}$ is defined as the reflection of~$\mathcal{S}_{2k}$ with respect to~$\mathcal{O}$. We refer to Figure~3 in~\cite{ccg} for a pictorial representation of this construction. It can be shown by induction that
	\begin{equation} 
		\label{eq:sigmakest-plap}
		\sigma_k \coloneqq \norma*{\nabla u}_{L^p(\mathcal{S}_k)} \le k (n+1) \left(C_\sharp + C_{14}\right) \abs{\log\defi(u)}^{-\frac{\alpha_\star}{p}} \quad \mbox{for every } k \in \N.
	\end{equation}
	
	We now show that~$u$ has universally positive energy inside a sufficiently large ball centered at~$\mathcal{O}$. To this end, using~\eqref{eq:Oinlambdacube-plap}, we obtain, for~$R_3 \ge \sqrt{n} \, \lambda_0$,
	\begin{align*}
		\int_{\R^n \setminus B_{2R_3}(\mathcal{O})} \,\abs*{\nabla u}^p \, dx \leq \frac{\left(p-1\right)C_1^p \, \Haus^{n - 1}(\sfera^{n - 1})}{\left(n-p\right)} R_3^{\frac{p-n}{p-1}} \le \frac{1}{2} S^p \mathcal{M}^{p},
	\end{align*}
	provided that
	\begin{equation*}
		R_3 \ge \mathcal{R} \coloneqq \left( \frac{2 \left(p-1\right) C_1^p \, \Haus^{n - 1}(\sfera^{n - 1})}{(n-p) S^p \mathcal{M}^{p}} \right)^{\!\! \frac{p-1}{n - p}} \!.
	\end{equation*}
	Thus, setting~$R_3 \coloneqq \max \left\{\sqrt{n}\,\lambda_0, \mathcal{R} \right\}$ and exploiting Sobolev inequality as well as~\eqref{eq:boundmassa}, we deduce
	\begin{equation*}
		S^p \mathcal{M}^{p} \leq S^p \left(\int_{\R^n} u^{\past} dx\right)^{\!\!\frac{p}{\past}} \leq \int_{\R^n} \,\abs*{\nabla u}^p \, dx.
	\end{equation*}
	As a consequence,
	\begin{equation}
		\label{eq:ener-bolla}
		\int_{B_{2R_3}\left(\mathcal{O}\right)} \,\abs*{\nabla u}^p \, dx \geq \frac{1}{2} S^p\mathcal{M}^{p}.
	\end{equation}
	
	We are finally in position to prove that~$\mathcal{O}$ is close to~$T_{\lambda_\star}$. Let~$\mathfrak{s}>0$ be the width of each slab~$\mathcal{S}_k$ constructed above -- i.e.,~$\mathfrak{s} = 2 \, \mbox{dist} \left( \mathcal{O}, T_{\lambda_\star} \right)$. First, observe that~$\mathfrak{s} < 4R_3$. Indeed, if this were not the case, we would have~$B_{2R_3}(\mathcal{O}) \subseteq \mathcal{S}_1$, which, in view of~\eqref{eq:step1-plap} and~\eqref{eq:ener-bolla}, is impossible provided that
	\begin{equation*}
		\gamma \le \gamma_{15} \coloneqq \exp\left\{-\left( \frac{4 C_{14}^p}{S^p \mathcal{M}^{p}} \right)^{\!\! \frac{1}{\alpha_\star}}\right\}.
	\end{equation*}
	It is easy to check that~$B_{2R_3} (\mathcal{O})$ is covered by the first~$k_0 \coloneqq \left\lceil \frac{4 R_3}{\mathfrak{s}} \right\rceil + 1$ slabs~$\mathcal{S}_k$. By using that~$k_0 \le \frac{12 R_3}{\mathfrak{s}}$ and~\eqref{eq:sigmakest-plap}, we deduce
	\begin{equation*}
		\begin{split}
			\int_{B_{2R_3}\left(\mathcal{O}\right)} \,\abs*{\nabla u}^p \, dx &\leq \sum_{k=1}^{k_0} \sigma_k^p \leq (n+1)^p (C_\sharp + C_{14})^p \, \abs{\log\defi(u,\kappa)}^{-\alpha_\star} \sum_{k = 1}^{k_0} k^p \\
			&\leq \frac{12^{p+1} R_3^{p+1}}{\mathfrak{s}^{p+1}} \, (n+1)^p (C_\sharp + C_{14})^p \, \abs{\log\defi(u,\kappa)}^{-\alpha_\star}.
		\end{split}
	\end{equation*}
	Combining this with~\eqref{eq:ener-bolla}, we conclude that
	\begin{equation*}
		\mathrm{dist}\left(\mathcal{O},T_{\lambda_\star}\right)^{p+1} = \left(\frac{\mathfrak{s}}{2} \right)^{\! p+1} \le \frac{2 \, 6^{p+1} R_3^{p+1}}{S^p \mathcal{M}^{p}} \, (n+1)^p (C_\sharp + C_{14})^p \, \abs{\log\defi(u,\kappa)}^{-\alpha_\star}.
	\end{equation*}
	Thus, claim~\eqref{eq:claimhyperclose} is established.
	
	Given a direction~$\omega \in \sfera^{n - 1}$, let~$\mu_\star = \mu_\star(\omega) \in \R$ be the unique real number for which~$\mathcal{O} \in T_{\omega, \mu_\star}$, i.e.,~$\mu_\star = \left\langle \omega, \mathcal{O} \right\rangle$. We now claim that there exists a universal constant~$C_{15} > 0$ such that, for every~$\omega \in \sfera^{n - 1}$,
	\begin{equation} \label{eq:2ndmainMPclaim-plap}
		\begin{gathered}
			u(x) - u_{\omega, \lambda}(x) \le C_{15} \,\abs*{\log \defi(u,\kappa)}^{-\frac{\alpha_\star}{p+1}} \quad \mbox{for every } x \in \Sigma_{\omega, \lambda} \mbox{ and } \lambda \ge \mu_\star, \\
			\norma*{ u - u_{\omega, \mu_\star} }_{L^\infty(\R^n)} + \norma*{ \nabla (u - u_{\omega, \mu_\star}) }_{L^p(\R^n)}^p \le C_{15} \,\abs*{\log \defi(u,\kappa)}^{-\frac{\alpha_\star}{p+1}}.
		\end{gathered}
	\end{equation}
	Note that once we have established~\eqref{eq:2ndmainMPclaim-plap}, the conclusion follows as in~\cite{ccg}.
	
	To this end, by taking advantage of~\eqref{eq:claimhyperclose}, we see that the pointwise inequality on the first line of~\eqref{eq:2ndmainMPclaim-plap} follows directly from that of~\eqref{eq:mainMPclaim-plap} -- trivially in case the first alternative in~\eqref{eq:claimhyperclose} holds, using~\eqref{eq:C1boundonu} if the second one is true. Applying this inequality for both directions~$\omega$ and~$-\omega$, one deduces the validity of the~$L^\infty$-estimate stated on the second line of~\eqref{eq:2ndmainMPclaim-plap} -- here we are taking advantage of the fact that~$\mu_\star(-\omega) = - \mu_\star(\omega)$ and therefore~$u_{-\omega, \mu_\star(-\omega)} = u_{\omega, \mu_\star(\omega)}$ in~$\R^n$. Finally, by testing the equations for~$u$ and~$u_{\omega, \mu_\star}$ against~$u - u_{\omega, \mu_\star}$, performing a change of changing variables, and using hypotheses~\eqref{eq:decadimento}, we obtain
	\begin{equation*}
		\begin{split}
			\hat{c}(p) \int_{\R^n} \abs*{\nabla (u-u_{\omega, \mu_\star})}^p \, dx
			& \leq 2 \int_{\R^n} \kappa u^{\past} (u-u_{\omega, \mu_\star}) \, dx \\
			& \le 2 \, C_0^{\past} \, \norma*{ \kappa }_{L^\infty(\R^n)} \, \norma*{ u - u_{\omega, \mu_\star} }_{L^\infty(\R^n)} \int_{\R^n} \left(1 + \abs*{x}^{\frac{n-p}{p-1}}\right)^{\!\!-\past} \! dx \\
			& \le C \,\abs*{\log \defi(u,\kappa)}^{-\frac{\alpha_\star}{p+1}},	\end{split}
	\end{equation*}
	for some universal constant~$C > 0$. This establishes the~$\mathcal{D}^{1,p}$-bound, thereby concluding the proof.
	
	
	\appendix
	\section{Proof of Theorem~\ref{th:int-grad} and Theorem~\ref{th:int-grad-sp}}
	\label{ap:proofsum}
	
	We provide here some details of the proofs of Theorem~\ref{th:int-grad} and Theorem~\ref{th:int-grad-sp}.
	
	\begin{proof}[Proof of Theorem~\ref{th:int-grad}]
		Recalling the notation introduced in Section~\ref{sec:prelim-bolla-plap}, we define~$E=B_{1-\frac{1}{2}\delta_0}$ and~$E_{\delta_0}=B_{1-\frac{1}{4}\delta_0}$. In this way,
		\begin{equation*}
			\mathcal{Z}_u \Subset E \Subset E_{\delta_0} \Subset B_1,
		\end{equation*}
		moreover, by~\eqref{eq:grad>0}, we have that~$\abs*{\nabla u} \geq \left(1-\frac{1}{2^{\alpha_\star}}\right)\beta B$ in~$B_1 \setminus E$. As a consequence,
		\begin{equation*}
			\int_{B_1\setminus E} \frac{1}{\abs*{\nabla u}^{(p-1)r}} \, dx \leq C,
		\end{equation*}
		for some~$C>0$ depending only on~$n$,~$p$,~$r$,~$\beta$, and~$B$. Therefore, we only need to estimate the integral over~$E$. To this end, observe that by~\eqref{eq:ulinearbound}, we have
		\begin{equation}
			\label{eq:def-u0}
			u(x) \geq \frac{1}{2} \frac{\delta_0}{C_2} \eqqcolon u_0 \quad \mbox{for all } x \in E
		\end{equation}
		and, by~\eqref{eq:mainhyp-f} and~\eqref{eq:k-below}, we also have
		\begin{equation*}
			\kappa f(u) \geq \underline{\kappa} \min_{\left[u_0,C_0\right]} f \eqqcolon f_0>0 \quad \mbox{a.e.\ in } E.
		\end{equation*}
		
		From now on, the proof is an adaptation of those of Theorem~2.2 and Theorem~2.3 in~\cite{ds-poinc}.
	\end{proof}
	
	\begin{remark}
		\label{rem:dep-f}
		As one can already observe in the proof of Theorem~2.3 in~\cite{ds-poinc}, the dependence on~$f$ of the constant~$\mathscr{C}$ in Theorem~\ref{th:int-grad} is clear and through~$\min_{\left[u_0,C_0\right]} f$, where~$u_0$ is given in~\eqref{eq:def-u0} and depends on universal constants, as well. In particular, if also~$f(0)>0$, one can assume that~$\mathscr{C}$ depends on~$\min_{\left[0,C_0\right]} f$.
	\end{remark}
	
	We now address the case of the critical equation.
	
	\begin{proof}[Proof of Theorem~\ref{th:int-grad-sp}]
		Note that by~\eqref{eq:bb-grad}, we have
		\begin{equation*}
			\mathcal{Z}_u \Subset B_{\! R_0} \Subset B_{\! R+1},
		\end{equation*}
		and~$\abs*{\nabla u} \geq c_1 (R+1)^\frac{1-n}{p-1} $ in~$B_{\! R+1} \setminus B_{\! R_0}$. As a result,
		\begin{equation*}
			\int_{B_{\! R+1} \setminus B_{\! R_0}} \frac{1}{\abs*{\nabla u}^{(p-1)r}} \, dx \leq C,
		\end{equation*}
		for some~$C>0$ depending only on~$n$,~$p$,~$r$,~$c_1$,~$R_0$, and~$R$. Therefore, we only need to estimate the integral over~$B_{\! R_0}$. By observing that, thanks to~\eqref{eq:bb-u}, we have
		\begin{equation*}
			u(x) \geq \frac{c_0}{1+R_0^\frac{n-p}{p-1}} \eqqcolon u_0 \quad \mbox{for every } x \in B_{\! R_0},
		\end{equation*}
		it follows that
		\begin{equation*}
			\kappa u^{\past-1} \geq \underline{\kappa} u_0^{\past-1} \quad \mbox{a.e.\ in } B_{\! R_0}.
		\end{equation*}
		Therefore, we can argue as in the proof of Theorem~\ref{th:int-grad}.
	\end{proof}
	
	
	\section*{Acknowledgments} 
	\noindent The author has been partially supported by the “Gruppo Nazionale per l'Analisi Matematica, la Probabilità e le loro Applicazioni” (GNAMPA) of the “Istituto Nazionale di Alta Matematica” (INdAM, Italy).
	
	He would also like to express his sincere gratitude to Prof.\ Giulio Ciraolo and Prof.\ Matteo Cozzi for their advice during the preparation of this work.
	
	
	\bigskip

\end{document}